\pgfplotsset{compat=1.18}
\begin{document}
\newcommand{\nc}{\newcommand}
\nc{\RR}{\mathbb{R}}
\nc{\CC}{\mathbb{C}}

\nc{\mrm}{\mathrm}
\nc{\mA}{\mathrm{A}}
\nc{\mB}{\mathrm{B}}
\nc{\mL}{\mathrm{L}}
\nc{\mH}{\mathrm{H}}
\nc{\mJ}{\mathrm{J}}
\nc{\mV}{\mathrm{V}}
\nc{\mW}{\mathrm{W}}
\nc{\Id}{\mathrm{Id}}
\nc{\mT}{\mathrm{T}}
\nc{\mS}{\mathrm{S}}
\nc{\mQ}{\mathrm{Q}}
\nc{\mX}{\mathrm{X}}
\nc{\mbH}{\mathbb{H}}
\nc{\mbV}{\mathbb{V}}
\nc{\mbX}{\mathbb{X}}

\nc{\Th}{\mathrm{T}_{h}}
\nc{\Vh}{\mathrm{V}_{h}}
\nc{\Rh}{\mathrm{R}_{h}}
\nc{\mR}{\mathrm{R}}
\nc{\VhO}{\mathrm{V}_{h,0}}
\nc{\mbVh}{\mathbb{V}_{h}}
\nc{\calTh}{\mathcal{T}_{h}}
\nc{\Ph}{\mathrm{P}_{h}}

\nc{\mP}{\mathrm{P}}

\nc{\infsup}{\mathop{\mrm{infsup}}}
\nc{\rref}{\textsc{r}}

\nc{\bp}{\boldsymbol{p}}
\nc{\bff}{\boldsymbol{f}}
\nc{\bg}{\boldsymbol{g}}
\nc{\by}{\boldsymbol{y}}
\nc{\bx}{\boldsymbol{x}}
\nc{\bz}{\boldsymbol{z}}
\nc{\bn}{\boldsymbol{n}}
\nc{\bu}{\boldsymbol{u}}
\nc{\bv}{\boldsymbol{v}}
\nc{\bq}{\boldsymbol{q}}
\nc{\bb}{\boldsymbol{b}}
\nc{\bbeta}{\boldsymbol{\beta}}
\nc{\bxi}{\boldsymbol{\xi}}

\nc{\bfp}{\mathbf{p}}
\nc{\bfff}{\mathbf{f}}
\nc{\bfy}{\mathbf{y}}
\nc{\bfx}{\mathbf{x}}
\nc{\bfn}{\mathbf{n}}
\nc{\bfu}{\mathbf{u}}
\nc{\bfv}{\mathbf{v}}
\nc{\bfq}{\mathbf{q}}
\nc{\bfg}{\mathbf{g}}

\nc{\dir}{\textsc{d}}
\nc{\neu}{\textsc{n}}

\nc{\Ploc}{\mrm{P}_\mathrm{loc}}
\nc{\loc}{\mathrm{loc}}
\nc{\supp}{\mathrm{supp}}
\nc{\dof}{\mrm{dof}}
\nc{\DOF}{\mathbb{D}\mrm{of}}

\nc{\llangle}{\langle\!\langle}
\nc{\rrangle}{\rangle\!\rangle}
\nc{\lbr}{\lbrack}
\nc{\rbr}{\rbrack}

\nc{\ctru}{\mathfrak{u}}
\nc{\ctrv}{\mathfrak{v}}
\nc{\ctrw}{\mathfrak{w}}
\nc{\ctrp}{\mathfrak{p}}
\nc{\ctrq}{\mathfrak{q}}
\nc{\ctrf}{\mathfrak{f}}

\nc{\bs}{\boldsymbol{s}}

\renewcommand{\dim}{\mrm{dim}}
\renewcommand{\span}{\mathop{\mrm{span}}}

\nc{\bfA}{\mathbf{A}}
\nc{\bfB}{\mathbf{B}}
\nc{\bfT}{\mathbf{T}}
\nc{\bfQ}{\mathbf{Q}}

\nc{\dsp}{\displaystyle}

\newtheorem{defn}{Definition}[section]
\newtheorem{lem}[defn]{Lemma}
\newtheorem{thm}[defn]{Theorem}
\newtheorem{prop}[defn]{Proposition}
\newtheorem{cor}[defn]{Corollary}
\newtheorem{definition}[defn]{Definition}
\newtheorem{rem}[defn]{Remark}

\newtheorem{example}{Example}

\newtheorem{asum}{Assumption}
\newtheorem{cond}[asum]{Condition}

\begin{frontmatter}

\title{Hierarchical matrix approximability of inverse\\
of convection dominated finite element matrices}
  
\author[ifp]{A. Saunier}
\author[ifp]{L. Agelas}
\author[ifp]{A. Anciaux Sedrakian}
\author[ifp]{I. Ben Gharbia}
\author[poems]{X. Claeys}

\affiliation[ifp]{organization={IFP Énergies nouvelles},
                  addressline={1-4 Av. du Bois Préau}, 
                  city={Rueil-Malmaison}, 
                  postcode={92852},
                  country={France}}

\affiliation[poems]{organization={POEMS, CNRS, Inria, ENSTA, Institut Polytechnique de Paris},
                    city={Palaiseau}, 
                    postcode={91120}, 
                    country={France}}

\begin{abstract}
Several researchers have developed a rich toolbox of matrix compression techniques that exploit structure and redundancy in large matrices. Classical methods such as the block low-rank (BLR) format \cite{BLR1} and the Fast Multipole Method (FMM) \cite{Greengard} make it possible to
manipulate otherwise intractable systems by representing them in a reduced form.
Among the most sophisticated tools in this area are hierarchical matrices \cite{MR1981528, MR2767920, MBebendorf}(H-matrices), which exploit local properties of the underlying kernel or operator to approximate matrix blocks by low-rank factors, organized in a recursive hierarchy. Compared to simpler methods like BLR, H-matrices offer a more flexible and scalable framework, yielding nearly linear complexity in both storage and computation. Further extensions, such as H2-matrices \cite{MR2767920}
and Hierarchically Semi-Separable matrices (HSS) \cite{HSS1}, achieve even greater efficiency through the use of nested bases and structured factorizations.
Hierarchical matrix techniques, originally developed for boundary integral equations \cite{MR2767920,MR1981528,MR1993936}, have recently been applied to matrices stemming from the discretization of advection-dominated problems \cite{MR3422448, MR2606959}. However, their effectiveness is limited by the loss of coercivity induced by convection phenomena, where traditional methods fail. Initial work by Le Borne \cite{MR2011612} addressed this by modifying the admissibility criterion for structured grids with constant convection, but challenges remain for more general grids and advection fields.
In this work, we propose a novel partitioning strategy based on "convection tubes", clusters aligned with the convection vector field. This method does not require a structured grid or constant convection, overcoming the limitations of previous approaches. We present both theoretical analyses and numerical experiments, that demonstrate the efficiency and robustness of our method for convection-dominated PDEs on unstructured grids. The approach builds on a P\'eclet-robust Caccioppoli inequality, crucial for handling convection-dominated problems.
\end{abstract}

\begin{keyword}
Hierarchical matrices \sep Convection-dominated PDEs  \sep Caccioppoli inequality \sep Physics-aware cluster tree
\end{keyword}
\end{frontmatter}
\section*{Introduction}

Hierarchical matrix acceleration techniques have proven to be powerful for the numerical treatment of dense matrices arising from discretization of boundary integral operators \cite{MR1981528,MR1993936,MR2767920}. A natural extension of this work has been the exploration of their application to inverse of sparse matrices resulting from the discretization of partial differential equations (PDEs). Early works, such as \cite{MR1694265}, have recognized the potential of hierarchical matrices as an algebraic structure suitable for the numerical treatment of elliptic problems. However, significant progress in this area was made only later, with error estimates under the assumption of strong coercivity of the underlying PDEs \cite{MR1993936,MR3422448,MR2606959}.

The introduction of hierarchical matrices for advection-dominated problems, where coercivity is typically lost, has been more challenging. In these cases, naive applications of hierarchical matrix compression often fail to yield efficient approximations \cite{MR2606959}. The issue of applying these techniques in such settings was first addressed by Le Borne in \cite{MR2236673,MR2011612}, who proposed heuristic modifications to the admissibility criterion for hierarchical matrices to handle convection-dominated problems. However, this approach was limited to structured grids with constant convection fields aligned with the grid axes.

While progress has been made in extending hierarchical matrices to more general settings \cite{MR2876448, MR3679927}, the applicability of traditional admissibility criteria remains an open challenge. Recent contributions, like \cite{TY}, have proposed advanced partitioning strategies that improve the compression efficiency for such problems. These strategies, however, still face difficulties in handling general advection fields, particularly in unstructured grid settings.

In this work, we tackle convection-dominated problems using hierarchical matrix techniques. Our approach combines theoretical contributions and numerical validation, and can be summarized as follows:

\begin{itemize}
\item We aim to follow the analytical framework developed by Börm \cite{MR2606959} and by Melenk, Faustmann, and Praetorius \cite{MR3422448}. To make this strategy applicable to convection-dominated settings, we need a \textbf{Péclet-robust Caccioppoli inequality} for local finite-dimensional approximation spaces.
\item We prove this inequality in the constant convection case by introducing clusters aligned with the flow direction (“tube clusters”), which leads to bounds independent of the Péclet number.
\item We extend the analysis to non-constant, non-vanishing convection fields by means of locally straightening diffeomorphisms.
\item These theoretical results motivate a novel clustering strategy based on tube clusters that encompass the physic of the problem.
\item Numerical experiments confirm that the resulting $\mathcal{H}$-LU factorization achieves Péclet-robust accuracy with quasi-linear complexity.
\end{itemize}

The rest of this article is organized as follows. In Section \ref{section1}, we begin by presenting the model problem. In Section \ref{section2}, we treat the case of constant advection and show that for a specific geometry of clusters ("tube cluster") one can achieve a Péclet-robust Caccioppoli estimate which translate in a local approximation result. In Section \ref{section3} we extend this result to non constant advection using a deformation argument and we propose a method to produce a method that produces a suitable cluster tree. Finally in Section \ref{section4} we provide numerical experiments that test the accuracy of the $\mathcal{H}$-LU factorization based on the tube cluster tree for different convection fields, examining the influence of the diffusion parameter. To conclude, in Section \ref{section5}, we observe that our method appears to be Péclet-robust, allowing $\mathcal{H}$-LU factorizations to be performed in quasi-linear time while maintaining a bounded error, independently of the diffusion parameter. We also discuss the limitations induced by our partitioning strategy, which inherently imposes a non-arbitrary and relatively large lower bound on the size of the cluster tree leaves.

\section{Problem setting}
\label{section1}

We consider a bounded open polyhedral domain $\Omega\subset \RR^d$ for
some dimension $d\geq 1$ (typically $d=2$ or $3$). We also consider a vector field $\bb\in C^1(\RR^{d})^d$, $\beta\in
\mL^{\infty}(\Omega)$ with $\beta(\bx)\geq \beta_0>0\;\forall \bx\in\Omega$
and a source term $f\in \mL^{2}(\Omega)$, and we
study the boundary value problem consisting in looking for $u\in
\mH^1_0(\Omega)$ such that
\begin{equation}
  \begin{aligned}
     -\epsilon\Delta u +& \bb\cdot\nabla u + \beta u = f\quad \mrm{in}\;\Omega,\\
    & u=0 \quad \mrm{on}\;\partial\Omega.
  \end{aligned}
\end{equation}
  \label{StudiedPb1}

 In this problem, the parameter $\epsilon>0$ measures the importance of
the diffusion term in the equation. We are interested in the regime
where the so-called Péclet parameter $1/\epsilon$ is large.
As $\epsilon\to 0$, this equation turns into a transport equation. The
problem above admits the variational formulation: find $u\in \mH^{1}_0(\Omega)$
such that $a(u,v) = \ell(v)\;\forall v\in \mH^{1}_0(\Omega)$ with
the right-hand side $\ell(v):= \int_{\Omega} fv\,d\bx$ and
the sesquilinear form 
\begin{equation}
  a(u,v) := \int_{\Omega}\epsilon \nabla u \cdot\nabla
  v + v \bb\cdot\nabla u
  + \beta\,uv\,d\bx .
\label{prob_var}
\end{equation}
Let us suppose that 
\begin{equation}
    \mrm{div}(\bb(\boldsymbol{x}))/2-\beta(\boldsymbol{x}) < 0 \text{ for all } \boldsymbol{x}\in \Omega .
\label{conditionbeta}
\end{equation}
Then it is a clear consequence of Poincaré's inequality that
the sesquilinear form $a(\cdot,\cdot)$ is strongly coercive i.e.
\begin{equation}\label{Coercivity}
   a(v,v)\geq \epsilon \Vert \nabla v\Vert_{\mL^{2}(\Omega)}^2\quad \forall v\in \mH^{1}_0(\Omega).
\end{equation}

Let $\mathbb{T}_h(\Omega)$ refer to a quasi-uniform simplicial mesh of $\Omega$.
We consider the discretization of \eqref{StudiedPb1} by means of a
standard $\mathbb{P}_1-$Lagrange finite element scheme constructed
over $\mathbb{T}_h(\Omega)$. Define $\Vh(\Omega):=\{
v\in C^0(\overline{\Omega}):
v\vert_{\tau}\in\mathbb{P}_1(\tau)\;\forall \tau\in
\mathbb{T}_h(\Omega)\}$. The bilinear form $a(\cdot,\cdot)$ induces a linear
operator $\mA_h:\Vh(\Omega)\to \Vh(\Omega)^*$ defined by
\begin{equation}
  \langle \mA_h(u_h),v_h\rangle :=a(u_h,v_h)\quad \forall u_h,v_h\in\Vh(\Omega).
\end{equation}
It is an obvious consequence of the coercivity of $a(\cdot,\cdot)$
based on Lax-Milgram's lemma that $\mA_h$ is invertible. While the
operator $\mA_h$ is sparse in the sense that $\langle
\mA_h(u_h),v_h\rangle = 0$ as soon as
$\mrm{supp}(u_h)\cap\mrm{supp}(v_h)$ has vanishing Lebesgue measure,
its inverse $\mA_h^{-1}$ does not enjoy such a locality property.

The purpose of the present contribution is to investigate the
approximability of $\mA^{-1}_h$ by means of hierarchical
compression. While such a question was already investigated in
\cite{MR2606959,MR3422448}, we are particularly interested
in establishing an approximability result that is robust in the high
P\'eclet number regime $\epsilon\to 0$.

As was detailed in \cite{MR2011612,MR2236673}, a straightforward naive
application of standard hierarchical matrix approximation strategy
as presented in \cite{MR1981528} will fail to properly compress the
operator $\mA^{-1}_h$. This is related to the transport phenomenon
taking place as $\epsilon\to 0$. To circumvent this issue, we shall
modify the partitioning strategy, taking inspiration from the approaches
that were developed for the Helmholtz equation in \cite{MR2876448,MR3679927}.

\section{The case of constant advection}
\label{section2}
To gain a clear understanding of the objects we need to manipulate, we will begin by considering the constant case. Multiplying the equation by a constant factor and rotating the coordinate axes if necessary, we can assume without loss of generality that $\bb = (1, 0, \dots, 0)$. The proof strategy in \cite{MR1993936, MR2606959, MR3422448} consistently employs a Caccioppoli-type inequality as a key ingredient in establishing the low-rank approximability of admissible interactions. Our aim here is to explore the appropriate definition of admissibility for distant interactions, such that the Caccioppoli inequality holds independently of the Péclet number.
\subsection{P\'eclet-uniform Caccioppoli inequality}
\quad\\
In the following we define on $\Omega$ the $\mL^2$ scalar product as $\langle f , g\rangle_{\mL^2(\Omega)} = \int_\Omega fg d\bx, \; \forall f,g\in \mL^2(\Omega)$, the $\mL^2$-norm as $\Vert f\Vert ^2_{\mL^2(\Omega)} = \langle f, f\rangle_{\mL^2(\Omega)}$ and the $\mL^\infty$-norm as $\Vert f\Vert_{\mL^\infty(\Omega)} =  \sup_{x\in\Omega}\vert f(x)\vert$. For any domain $D\subset\mathbb{R}^d$ we take $\mathbf{n}_D$ to be the unit vector field normal to $\partial D$ pointing toward the exterior of $D$ . We take a bounded Lipschitz open set $\omega\subset \Omega$ and $\eta\in
\mrm{W}^{1,\infty}(\omega) = \{ v\in \mL^\infty(\omega) | \; \partial^\alpha_x  v \in \mL^1(\omega) \; \forall\alpha\in \mathbb{N}^d,\; \vert \alpha \vert \leq 1, \forall x \in \omega , \Vert x \Vert = 1\}$ satisfying $\mrm{supp}(\eta)\subset \omega$
and $\eta(\bx)\geq 0$ for all $\bx\in \RR^d$.  Then for any $u\in
\mH^{1}(\omega)$ we have 
\[\begin{aligned}
    \Vert \nabla (\eta u) \Vert_{\mL^2(\omega)}^2 &= \langle \nabla u , \nabla (\eta^2u)\rangle_{\mL^2(\omega)}-\langle \nabla u , \eta u  \nabla \eta \rangle_{\mL^2(\omega)} + \langle u\nabla \eta , \nabla( \eta u) \rangle_{\mL^2(\omega)} \\
    &= \langle \nabla u , \nabla (\eta^2 u) \rangle_{\mL^2(\omega)} + \langle u\nabla \eta , u\nabla \eta\rangle_{\mL^2(\omega)} \\
    &= \varepsilon^{-1}a(u , \eta^2  u) - \varepsilon^{-1}\int_{\omega} (\bb\cdot (\nabla u) \eta^2 u +\beta \eta^2 u^2) d\bx + \Vert u\nabla \eta \Vert_{\mL^2(\omega)}^2
\end{aligned} \] 
Because we assumed that there exists $\beta_0 > 0$ such that $\beta(x)\geq \beta_0$ $\forall x\in \Omega$, using $\mrm{div}(\bb/2)-\beta <0$ we get
\begin{equation}
     \varepsilon \Vert \nabla(  \eta u )\Vert_{\mL^2(\omega)}^2 \leq   a(u, \eta^2 u ) + \int_{\omega} \eta u ^2 \bb \cdot \nabla \eta d\bx + \varepsilon \Vert u\nabla \eta \Vert_{\mL^2(\omega)}^2
     \label{ineq0}
    \end{equation}
From this inequality we would like to deduce a Caccioppoli inequality
that is robust at large P\'eclet numbers i.e. for $\epsilon\to 0$.
Define 
\begin{equation}
    \mathcal{H}(\omega):= \{ v\vert_{\omega}, \; v\in \mH^1_0(\Omega) \; \mrm{ and } \;a(v,\varphi) = 0\;\forall \varphi\in \mH^{1}_0(\omega)\}
\end{equation}
which is the space of functions that are harmonic over $\omega$ with respect to the
sesquilinear form $a(\cdot,\cdot)$. A P\'eclet-robust Caccioppoli
inequality can be obtained with a clever choice of the cut-off
function $\eta$. Indeed, using \eqref{ineq0}, we have \eqref{CaccioppoliInequality11}

\begin{equation}\label{CaccioppoliInequality11}
  \begin{aligned}
    & \Vert \nabla (\eta u)\Vert_{\mL^{2}(\omega)}^2\leq 
    \Vert \nabla\eta \Vert_{\mL^{\infty}(\omega)}^2\Vert u\Vert_{\mL^{2}(\omega)}^2\\
    & \mrm{if}\;\; \bb\cdot \nabla \eta\leq 0\;\; \mrm{and}\;\; u\in \mathcal{H}(\omega).
  \end{aligned}
\end{equation}

Let us decompose the coordinate system like $\bx = (x_1,\tilde{x})$ and assume that
$\eta$ only depends on $\tilde{x}$ so that $\bb\cdot\nabla \eta = 0$ since we assumed
that $\bb = (1,0,\dots,0)$ rotating the coordinate axis if necessary. In order to satisfy the condition $\bb \cdot \nabla \eta = 0$, the function $\eta$ must be independent of the variable \( x_1 \). As a consequence, its support must extend across the entire domain \( \Omega \) in the direction of \( \bb \), reaching both the inflow and outflow boundaries.

\quad\\
Consider a bounded Lipschitz open subset $\tilde{\omega}\subset
\RR^{d-1}$ and $\tilde{\omega}_{\delta}:=\{\tilde{\bx}\in \RR^{d-1},
\mrm{dist}(\tilde{\bx},\tilde{\omega})\leq \delta\}$, and choose $\omega_\delta =
(\RR\times\tilde{\omega}_{\delta})\cap \Omega$ and $\omega =
(\RR\times\tilde{\omega})\cap \Omega$. Set $\chi(t) = 1$ if $t\leq
0$, $\chi(t) = 1-t$ for $0\leq t\leq 1$ and $\chi(t) = 0$ for $t\geq
1$, and take $\eta(\bx) =
\chi(\mrm{dist}(\tilde{\bx},\tilde{\omega})/\delta)$ for $\bx =
(x_1,\tilde{\bx})$. Then $\eta\in \mrm{W}^{1,\infty}(\RR^d)$ and $\Vert
\nabla \eta\Vert_{\mL^\infty(\RR^d)} \leq  \delta^{-1}$ and $\mrm{supp}(\eta)\subset
\RR\times \tilde{\omega}_{\delta}$ and $\eta(\bx) = 1$ for $\bx\in
\RR\times \tilde{\omega}$ as illustrated in Figure \ref{fig:nestdomain}.
\begin{figure}
    \centering
    \begin{tikzpicture}
    \draw(0,0)[purple] rectangle (4,2) ; 
    \draw [thick, purple](0,0)  --(0,2) ;
    \draw[purple] (0,1) node[left]{$\tilde{\omega}$};
    \draw[  blue] (0,0) --(0, -0.7) ; 
    \draw[  blue](0,2)--(0, 2.7) ;
    \draw[blue] (0,2.5) node [left]{$\tilde{\omega}_\delta$} ; 
    \draw [blue](0, 2.7)--(4,2.7)--(4, -0.7)--(0, -0.7) ;
    \draw[blue] (3,2.4) node{$\omega_\delta$} ;
    \draw[purple] (2,1) node{$\omega$} ; 
    \draw[->](1,0.5)--(1.4, 0.5) ; 
    \draw(1.2,0.3) node{$\bb$} ;

    \draw [thick, purple](6,0)  --(6,2) ;
    \draw[  blue] (6,0) --(6, -0.7) ; 
    \draw[  blue](6,2)--(6, 2.7) ;
    \draw[green](7,0)--(7,2) ; 
    \draw [green](6,2.7)--(7,2) ; 
    \draw[green](6,-0.70)--(7,0) ;
    \draw(6,-0.7)--(7.2,-0.7) ;
    \draw[green] (6.7, 2.5) node{$\eta$}; 
    \draw( 6,-0.7) node[below]{$0$} ; 
    \draw(7,-0.7) node[below]{$1$} ; 
    \draw (7.7,1.2) node{$\bigotimes\bb$};
    \draw [purple](6, 1) node[left]{$\tilde{\omega}$} ; 
    \draw[blue]( 6,2.3) node[left]{$\tilde{\omega}_\delta$} ; 
    
    \end{tikzpicture}
    \caption{Cutoff function $\eta$ on nested domains}
    \label{fig:nestdomain}
\end{figure}
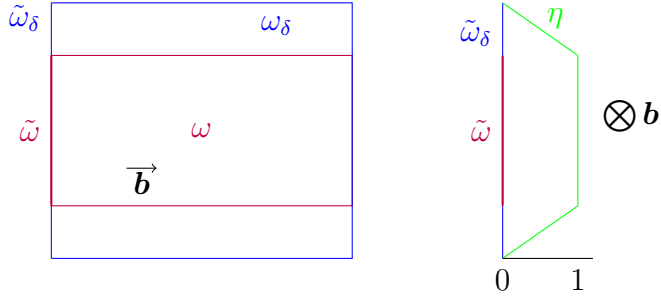
Applying \eqref{CaccioppoliInequality11} for
this particular setting yields

\begin{equation}\label{CaccioppoliEstimate1}
  \Vert \nabla u\Vert_{\mL^{2}(\omega)}^2\leq \frac{1}{\delta^2}\Vert u\Vert_{\mL^{2}(\omega_\delta)}^2\quad\forall u\in \mathcal{H}(\omega_\delta).
\end{equation}
It is remarkable that this inequality does not involve the parameter
$\epsilon$ which quantifies the P\'eclet number.  Of course, this was
obtained at the price of considering subsets $\omega_\delta$ and $\omega$ that are
elongated in the direction of the convection field $\bb$.

\begin{definition}\quad\\
We say that a domain $\tau\subset\Omega\subset \mathbb{R}^d$ is a tube cluster (implicitly a $\bb$-tube cluster) if there exists a bounded Lipschitz domain $\tau_\perp\in\mathbb{R}^{d-1}$ such that $\tau = (\mathbb{R}\times\tau_\perp)\cap\Omega$. In the following, for any tube cluster $\tau$, $\tau_\perp$ refers to the hyperplane of normal $\bb$. 

For a tube cluster $\tau=(\mathbb{R}\times \tau_\perp)\cap \Omega$ and $\delta>0$ we define the enlarged tube $\tau_\delta:=(\mathbb{R}\times \sigma)\cap \Omega$ where $\mrm{dist}(\tau_\perp,\partial \sigma)=\delta$ and $\tau_\perp\subset \sigma$.
\label{tube-cluster}
\end{definition}

\begin{rem}
    One should be careful in interpreting the geometry of $\tau_\delta$: it is an \emph{anisotropic inflation} of $\tau$. More precisely, we have $
\mathrm{dist}(\tau, \partial \tau_\delta) = 0,$ since both domains share the same extent along the $\bb$ direction. However, when projected onto the space orthogonal to $\bb$, the enlargement is visible: $\mathrm{dist}(\tau_\perp, (\tau_\delta)_\perp) = \delta > 0$.

This reflects the fact that $\tau_\delta$ is obtained by thickening $\tau$ only in directions perpendicular to the vector field.
\end{rem}

\begin{lem}\label{PoincareApprox}\quad\\
 Let $\omega\subset \Omega$ convex and $\mathcal{Z}$ referring to a closed subspace of
  $\mathrm{L}^{2}(\omega)$. For any $\ell\geq 1$ there
  exists a subspace $\mathcal{V}\subset \mathcal{Z}$ with
  $\mathrm{dim}(\mathcal{V})\leq
  \ell^{d}$ such that
  \begin{equation*}
    \inf_{v\in \mathcal{V}}\Vert u-v\Vert_{\mathrm{L}^{2}(\omega)}\leq \frac{\sqrt{d}}{\pi}
    \Big(\frac{\mathrm{diam}(\omega)}{\ell}\Big)
    \Vert \nabla u\Vert_{\mathrm{L}^{2}(\omega)}\quad \forall u\in \mathcal{Z}\cap\mathrm{H}^{1}(\omega). 
  \end{equation*}
  
\end{lem}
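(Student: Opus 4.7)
The plan is to first build an auxiliary $\ell^d$-dimensional subspace $\mathcal{W}\subset \mathrm{L}^2(\omega)$ of piecewise constant functions with the desired approximation property on $\mathrm{H}^1(\omega)$, and then push $\mathcal{W}$ into $\mathcal{Z}$ via an $\mathrm{L}^2$-orthogonal projection without enlarging its dimension or degrading the estimate.

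First I would enclose the convex set $\omega$ in an axis-aligned cube of side $\mathrm{diam}(\omega)$, subdivide that cube into $\ell^d$ congruent sub-cubes $C_1,\dots,C_{\ell^d}$ of side $\mathrm{diam}(\omega)/\ell$, and set $\omega_i:=C_i\cap\omega$. Each $\omega_i$ is a convex subset of $\omega$ (intersection of convex sets) with $\mathrm{diam}(\omega_i)\leq \sqrt{d}\,\mathrm{diam}(\omega)/\ell$. Let $\mathcal{W}\subset \mathrm{L}^2(\omega)$ be the space of functions that are constant on each $\omega_i$; then $\dim(\mathcal{W})\leq \ell^d$. The $\mathrm{L}^2(\omega)$-orthogonal projection onto $\mathcal{W}$, call it $\Pi_{\mathcal{W}}$, maps $u$ to its piecewise average, and the Payne--Weinberger inequality for convex domains gives
\begin{equation*}
  \Vert u-\Pi_{\mathcal{W}}u\Vert_{\mathrm{L}^2(\omega_i)}\leq \frac{\mathrm{diam}(\omega_i)}{\pi}\Vert \nabla u\Vert_{\mathrm{L}^2(\omega_i)}\leq \frac{\sqrt{d}}{\pi}\frac{\mathrm{diam}(\omega)}{\ell}\Vert \nabla u\Vert_{\mathrm{L}^2(\omega_i)}.
\end{equation*}
Squaring and summing over $i$ yields the same bound globally on $\omega$, for every $u\in \mathrm{H}^1(\omega)$.

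Next, let $\Pi_{\mathcal{Z}}$ denote the $\mathrm{L}^2(\omega)$-orthogonal projection onto the closed subspace $\mathcal{Z}$, and define $\mathcal{V}:=\Pi_{\mathcal{Z}}(\mathcal{W})\subset \mathcal{Z}$. Clearly $\dim(\mathcal{V})\leq \dim(\mathcal{W})\leq \ell^d$. For $u\in \mathcal{Z}\cap \mathrm{H}^1(\omega)$ one has $\Pi_{\mathcal{Z}}u=u$, hence
\begin{equation*}
  \inf_{v\in \mathcal{V}}\Vert u-v\Vert_{\mathrm{L}^2(\omega)}
  \leq \Vert u-\Pi_{\mathcal{Z}}\Pi_{\mathcal{W}}u\Vert_{\mathrm{L}^2(\omega)}
  = \Vert \Pi_{\mathcal{Z}}(u-\Pi_{\mathcal{W}}u)\Vert_{\mathrm{L}^2(\omega)}
  \leq \Vert u-\Pi_{\mathcal{W}}u\Vert_{\mathrm{L}^2(\omega)},
\end{equation*}
where the last inequality uses that an orthogonal projector is a contraction in $\mathrm{L}^2$. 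Combining with the previous step completes the proof.

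The only genuinely delicate step is the piecewise Poincaré estimate: it relies on convexity of each $\omega_i$ (to apply Payne--Weinberger with the sharp constant $1/\pi$) and on the correct control $\mathrm{diam}(\omega_i)\leq \sqrt{d}\,\mathrm{diam}(\omega)/\ell$. Everything else is a direct construction; in particular the idea of replacing the naive candidate $\mathcal{W}$ by its $\mathrm{L}^2$-projection into $\mathcal{Z}$ is the standard trick that makes $\mathcal{V}\subset \mathcal{Z}$ at no dimensional or quantitative cost.
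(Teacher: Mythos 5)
Your proof is correct and follows essentially the same route as the paper: bound $\omega$ by a cube of side $\mathrm{diam}(\omega)$, subdivide into $\ell^d$ sub-cubes, apply Payne--Weinberger on each $\omega_i = C_i \cap \omega$, take piecewise constants, and push into $\mathcal{Z}$ by $\mathrm{L}^2$-orthogonal projection using that the projection is a contraction. The one place you are a bit more careful than the paper is in explicitly noting that each $\omega_i$ is convex as an intersection of convex sets, which is exactly the point where the convexity hypothesis on $\omega$ is used and which the paper's proof leaves implicit.
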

\begin{proof}
\label{proof:PW}
Let $\omega$ be a tube cluster, it is a bounded domain. We take $\boldsymbol{x}_0\in\mathbb{R}^d$ such that $Q:=\{\boldsymbol{x}\in \mathbb{R}^d | \ \Vert \boldsymbol{x}-\boldsymbol{x}_0\Vert \leq \frac{1}{2}\mathrm{diam}(\omega) \}$ contains $\omega$, it is called a bounding box. We fix $\ell\in\mathbb{N}$ and define a regular cartesian subdivision $(Q_i)_{i\in\mathcal{J}}$ of $Q$ by dividing each coordinate direction into $\ell$ equal interval of length $\text{diam}(\omega)/\ell$ such that $\# \mathcal{J} :=k= \ell^d$.
Each subdomain is defined as \( \omega_i := Q_i \cap \omega \).  
By construction, each \( \omega_i \) is contained in a cube of side length \( \frac{1}{\ell} \mathrm{diam}(\omega) \), and therefore satisfies the estimate \[ \mathrm{diam}(\omega_i) \leq  \frac{\sqrt{d}}{\ell}\mathrm{diam}(\omega).\]On each $\omega_i$ we use the Poincar\'e Wirtinger's inequality with a suitable constant approximation $u_i = \frac{1}{|\omega_i|}\int_{\omega_i}ud\boldsymbol{x}$ and we get
\[ \int_{\omega_i}|u-u_i|^2d\boldsymbol{x} \leq \frac{\mathrm{diam}(\omega_i)^2}{\pi^2} \int_{\omega_i}|\nabla u|^2 d\boldsymbol{x}.\] If we define $\mathcal{W}_k=\{ v\in\mathrm{L}^2(\omega) | \;v|_{\omega_i}\in \mathbb{R}\}$ and take $\Pi(u)\in\mathcal{W}_k$ where $\Pi(u)|_{\omega_i} = u_i \quad \forall i \in \mathcal{I}$, we have 
\begin{equation*}
    \begin{aligned}
        \Vert u-\Pi(u)\Vert^2 _{\mathrm{L}^2(\omega)}&= \sum_{i=1}^k \Vert u-u_i \Vert^2_{\mathrm{L}^2(\omega_i)}
        \leq \frac{d}{\pi^2 } \sum_{i=1}^k \frac{\mathrm{diam}(\omega)^2}{\ell^2}\Vert \nabla u\Vert^2_{\mathrm{L}^2(\omega_i)}\\
        &\leq \frac{d}{\pi^2 }\frac{\mathrm{diam}(\omega)^2}{\ell^2}\Vert \nabla u \Vert^2_{\mathrm{L}^2(\omega)}.
    \end{aligned}
\end{equation*} 
Finally, consider the $\mathrm{L}^{2}(\omega)-$orthogonal projection $\mathrm{P}:\mathrm{L}^{2}(\omega)\to \mathcal{Z}$
and set $\mathcal{V}:=\mathrm{P}(\mathcal{W}_k)$ so that $\mathrm{dim}(\mathcal{V})\leq \mathrm{dim}(\mathcal{W}_k)$.
Then for any $u\in \mathcal{Z}\cap \mathrm{H}^{1}(\omega)$ we have
$\Vert u-\mathrm{P}\cdot\Pi(u)\Vert_{\mathrm{L}^{2}(\omega)} = \Vert \mathrm{P}(u-\Pi(u))\Vert_{\mathrm{L}^{2}(\omega)}\leq
\Vert u-\Pi(u)\Vert_{\mathrm{L}^{2}(\omega)}\leq (\sqrt{d}/\pi)(\mathrm{diam}(\omega)/\ell)
\Vert \nabla u\Vert_{\mathrm{L}^{2}(\omega)}$. \hfill 

\end{proof}

This result is given in
\cite[Lemma 3]{MR2606959} and \cite[Lemma 2.1]{MR1993936} but might be improved. Indeed we will see in the following that the $\omega_i$ are elongated in one direction and a thorough study of this inequality might allow us to highlight both $\mrm{diam}(\omega)$ and $\mrm{diam}(\omega_\perp)$.

In this framework we can prove some results on the local approximability of the solution in low dimension.

\begin{prop}\quad\\
  Let $\eta>0$, $q\in (0,1)$, $p\in \mathbb{N}$ with $p>2$ and a tube cluster $\tau = \Omega\cap (\RR\times \tau_{\perp})$. For all $\sigma\subset\Omega$ satisfying $\mrm{diam}(\tau) \leq 2\eta \mrm{dist}(\tau , \sigma)$, we can find a space $\mV\subset\mL^{2}(\tau)$
  where for some constant $C_{\mrm{dim}}$
  \begin{equation}\label{DimensionBound1}
    \mrm{dim}(\mV)\leq C_{\mrm{dim}}p^{d+1}
  \end{equation}
  and such that for all
  right hand sides $f\in  \mL^2(\Omega)$ with $\mrm{supp}(f)\subset \sigma$, the unique function
  $u\in\mH^{1}_{0}(\Omega)$ satisfying $a(u,\varphi) = \langle f,\varphi\rangle_{\mL^2(\Omega)}\;\forall\varphi\in \mH^{1}_0(\Omega)$
  can be approximated on $\tau$ by $v\in \mV$ satisfying the estimates for some constant $C$
  \begin{equation}
    \begin{aligned}
      & \Vert \nabla u-\nabla v\Vert_{\mL^{2}(\tau)}\leq C \frac{p}{\mrm{dist}(\tau , \sigma)} q^{p-1}\Vert f\Vert_{\mL^2(\Omega)}\\
      & \Vert u-v\Vert_{\mL^2(\tau)}\leq C q^{p}\Vert f\Vert_{\mL^{2}(\Omega)}
    \end{aligned}
  \end{equation}
\label{theoreme}
\end{prop}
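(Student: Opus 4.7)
The plan is to follow the iterative strategy of Börm \cite{MR2606959} and Faustmann--Melenk--Praetorius \cite{MR3422448}, substituting the classical Caccioppoli inequality by the Péclet-robust version \eqref{CaccioppoliEstimate1} and the usual nested balls by nested \emph{tube clusters} in the sense of Definition \ref{tube-cluster}. The admissibility hypothesis $\mrm{diam}(\tau)\leq 2\eta\,\mrm{dist}(\tau,\sigma)$ provides a margin $R := \mrm{dist}(\tau,\sigma) > 0$ between $\tau$ and $\sigma\supset\mrm{supp}(f)$; since $\tau$ already spans $\Omega$ along $\bb$, this margin is transversal to $\bb$, so anisotropic (perpendicular) inflations of $\tau$ stay disjoint from $\sigma$. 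I would set $\delta := R/(2(p+1))$ and build nested tube clusters $\tau = \tau_0\subset\tau_1\subset\cdots\subset\tau_{p+1}$ by successive perpendicular inflation by $\delta$. Each $\tau_j$ is still a tube cluster, $\tau_{p+1}\cap\sigma=\emptyset$, and therefore $u|_{\tau_j}\in\mathcal{H}(\tau_j)$ for every $j$, since $a(u,\varphi)=0$ for every $\varphi\in\mH^{1}_0(\tau_j)$.

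Next, construct approximants $u_k$ inductively with $u_0:=0$, in such a way that $u_k\in\mathcal{H}(\tau_{p-k+2})$ and $\|u-u_k\|_{\mL^2(\tau_{p-k+1})}\leq q^{k}\,\|u\|_{\mL^2(\tau_{p+1})}$. At step $k+1$, the Péclet-robust Caccioppoli inequality \eqref{CaccioppoliEstimate1} applied to $u-u_k\in\mathcal{H}(\tau_{p-k+1})$ on the nested pair $\tau_{p-k}\subset\tau_{p-k+1}$ gives
\begin{equation*}
\|\nabla(u-u_k)\|_{\mL^2(\tau_{p-k})}\;\leq\;\delta^{-1}\|u-u_k\|_{\mL^2(\tau_{p-k+1})}\;\leq\;\delta^{-1}q^{k}\|u\|_{\mL^2(\tau_{p+1})}.
\end{equation*}
Applying Lemma \ref{PoincareApprox} on $\tau_{p-k}$ with the closed subspace $\mathcal{Z}:=\mathcal{H}(\tau_{p-k+1})|_{\tau_{p-k}}$ and parameter $\ell$ then produces $w_{k+1}$ in a subspace $\tilde{\mathcal{V}}_{k+1}\subset\mathcal{Z}$ of dimension $\leq\ell^{d}$ with
\begin{equation*}
\|(u-u_k)-w_{k+1}\|_{\mL^2(\tau_{p-k})}\;\leq\;\frac{\sqrt{d}\,\mrm{diam}(\tau_{p-k})}{\pi\ell}\,\|\nabla(u-u_k)\|_{\mL^2(\tau_{p-k})}.
\end{equation*}
Admissibility bounds $\mrm{diam}(\tau_{p-k})/\delta\leq C(\eta+1)p$, so choosing $\ell=\lceil C(\eta+1)p/q\rceil$ makes the contraction factor at most $q$; setting $u_{k+1}:=u_k+w_{k+1}$ closes the induction, and $u_{k+1}\in\mathcal{H}(\tau_{p-k+1})$ because $w_{k+1}$ is the restriction of an element of $\mathcal{H}(\tau_{p-k+1})$.

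After $p$ iterations, $u_p\in\mathcal{H}(\tau_2)$ and $\|u-u_p\|_{\mL^2(\tau_1)}\leq q^{p}\|u\|_{\mL^2(\tau_{p+1})}$. Coercivity \eqref{Coercivity} supplies the stability bound $\|u\|_{\mL^2(\Omega)}\leq C\|f\|_{\mL^2(\Omega)}$, so taking $v:=u_p$ and $\mV:=\span(\tilde{\mathcal{V}}_1,\dots,\tilde{\mathcal{V}}_p)$ yields by the inclusion $\tau\subset\tau_1$
\begin{equation*}
\|u-v\|_{\mL^2(\tau)}\;\leq\;\|u-v\|_{\mL^2(\tau_1)}\;\leq\;Cq^{p}\|f\|_{\mL^2(\Omega)}.
\end{equation*}
One final Caccioppoli step applied to $u-v\in\mathcal{H}(\tau_1)$ on the pair $\tau\subset\tau_1$ yields $\|\nabla(u-v)\|_{\mL^2(\tau)}\leq\delta^{-1}\|u-v\|_{\mL^2(\tau_1)}\leq(Cp/R)\,q^{p}\|f\|_{\mL^2(\Omega)}$, which is stronger than the stated $(Cp/R)\,q^{p-1}$ bound. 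The dimension count is $\dim(\mV)\leq p\,\ell^{d}\leq C_{\mrm{dim}}\,p^{d+1}$, yielding \eqref{DimensionBound1}.

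The main technical difficulty is keeping the iteration inside $a$-harmonic spaces so that every Caccioppoli step retains the Péclet-robust factor $\delta^{-1}$ rather than degrading to some $\epsilon^{-1}$-dependent factor. This is precisely why Lemma \ref{PoincareApprox} must be invoked with $\mathcal{Z}$ taken as the restriction of the $a$-harmonic space on the enclosing tube: the correction $w_{k+1}$ then lifts canonically to an element of $\mathcal{H}(\tau_{p-k+1})$ and the running approximant $u_{k+1}$ remains in an $a$-harmonic space over the required nested tube, so the geometric decay propagates cleanly across all $p$ levels. Ancillary points (Lipschitz regularity of the inflated $\tau_j$, and the closedness of $\mathcal{H}(\tau_{p-k+1})|_{\tau_{p-k}}$ in $\mL^2(\tau_{p-k})$ needed by Lemma \ref{PoincareApprox}) are routine once $\tau_\perp$ is Lipschitz and $\Omega$ is polyhedral.
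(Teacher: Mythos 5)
Your construction follows the paper's own iterative scheme essentially step for step: nested perpendicular inflations of the tube cluster, alternation of the Péclet-robust Caccioppoli estimate \eqref{CaccioppoliEstimate1} with the finite-dimensional approximation Lemma \ref{PoincareApprox} on each shell, dimension count $p\ell^d$ with $\ell \sim p$, and a final stability bound to eliminate the solution from the right-hand side. The indexing is reversed (the paper shrinks $\omega_1 \supset \cdots \supset \omega_p = \tau$, you expand $\tau = \tau_0 \subset \cdots \subset \tau_{p+1}$ and walk inward) but this is cosmetic. Your treatment of the gradient bound is arguably cleaner than the paper's: you first close the $\mL^2$ estimate on $\tau_1$ and then spend one extra shell on a terminal Caccioppoli step from $\tau_1$ to $\tau$, which sidesteps the issue of having to bound $\nabla v_p$ that the paper's inline accounting leaves implicit.

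There is, however, one genuine gap. You invoke coercivity \eqref{Coercivity} to justify $\|u\|_{\mL^2(\Omega)} \leq C\|f\|_{\mL^2(\Omega)}$. But \eqref{Coercivity} only gives $a(v,v) \geq \epsilon\|\nabla v\|^2_{\mL^2}$; combined with Poincaré and $a(u,u) = \langle f,u\rangle$ this yields $\|u\|_{\mL^2} \lesssim \epsilon^{-1}\|f\|_{\mL^2}$, which would reintroduce the very $\epsilon^{-1}$ blow-up the whole construction is designed to avoid. The Péclet-robust stability bound must instead come from the zeroth-order structure: using $\int_\Omega u\,\bb\cdot\nabla u\,d\bx = -\tfrac12\int_\Omega \mathrm{div}(\bb)\,u^2\,d\bx$ for $u\in\mH^1_0(\Omega)$ and the standing hypotheses $\mathrm{div}(\bb)/2 - \beta < 0$, $\beta \geq \beta_0 > 0$, one gets $a(u,u) \geq \beta_0\|u\|^2_{\mL^2(\Omega)}$ with a constant independent of $\epsilon$, which is precisely the argument the paper makes at the end of its proof. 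Without that correction the stated $\epsilon$-independent constant $C$ would not follow from your chain of inequalities. A secondary, smaller point: you set the inflation margin from $R = \mrm{dist}(\tau,\sigma)$ and assert it is purely transversal because $\tau$ spans $\Omega$ along $\bb$; the paper instead anchors $\delta$ to $\mrm{dist}(\tau_\perp,\sigma)$, which is the quantity that actually governs whether the inflated tubes $\tau_j$ stay clear of $\mrm{supp}(f)$. These two distances need not coincide for a general $\sigma\subset\Omega$, so the transversality of the margin deserves a line of justification (or one should work directly with the transverse distance as the paper does).
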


\begin{proof}
We consider a tube cluster $\tau=(\mathbb{R}\times\tau_\perp)\cap\Omega$. Let $\eta>0$, $f\in\mL^2(\Omega)$ with $\mrm{supp}(f)\subset \sigma$ such that $\mrm{diam}(\tau)\leq 2\eta\mrm{dist}(\tau, \sigma)$ and $u\in \mH^1_0(\Omega)$ the solution of the variational problem \eqref{prob_var}.
Set $\delta  =
\mrm{dist}(\tau_{\perp},\sigma)$, we consider
\(\omega_{\perp,j}:=\{\bx'\in
\RR^{d-1}\vert\;\mrm{dist}(\bx',\tau_{\perp})\leq (1-j/p)\delta \},\)
and define $\omega_j:= \Omega\cap (\RR\times\omega_{\perp,j})$ for
$j=1,\dots,p$. These are nested tubes clusters $\tau = \omega_{p}\subset
\omega_{p-1}\subset \dots\subset \omega_1\subsetneq\tau_\delta$. The idea of the proof is to iterate lemma \ref{PoincareApprox} and our Caccioppoli estimate \eqref{CaccioppoliEstimate1} on narrower sets.
\quad\\
 By eventually extending $u$ by $0$ outside of $\Omega$ we build $u_1\in \mH^1_0(\Omega \cup \omega_1)$ , $u_1|_\Omega := u$ and $u_1|_{\omega_1\backslash\Omega}:=0$. For any domains $D\subset \omega_1$ such that $\mrm{dist}(D, \partial\omega_1)>0$ we have $u_1|_D\in \mH^1(D)$. Moreover, for any $v\in \mH^1_0(\Omega)$ with $\mrm{supp}(v)\subset D$, because $\mrm{supp}(v)\cap\mrm{supp}(f)=\emptyset$, we have \[ a(u_1|_\Omega, v) = a(u,v)=0,\]
hence $u_1\in \mathcal{H}(\omega_1)$.
As a consequence, we can apply lemma \ref{PoincareApprox} with $\mathcal{Z} =
\mathcal{H}(\omega_1)$ to find
a subspace $\mV_1\subset \mathcal{H}(\omega_1)$ such that $\mrm{dim}(\mV_1)\leq
\ell^d$ and some $v_1\in \mV_1$ satisfying with our admissibility condition
\begin{equation*}
  \begin{aligned}
     \Vert u_1-v_1\Vert_{\mL^{2}(\omega_1)}&\leq \frac{\sqrt{d}}{\pi} \frac{\mrm{diam}(\omega_1)}{\ell}\Vert \nabla u_1\Vert_{\mL^{2}(\omega_1)}\\
     &\leq \frac{\sqrt{d}}{\pi}
    \frac{2\delta (\eta+1)}{\ell}\Vert \nabla u_1\Vert_{\mL^{2}(\omega_1)}
  \end{aligned}
\end{equation*}
By taking $u_2 := (u_1-v_1)|_{\omega_2}$, as $\omega_1$ and $\omega_2$ are nested tube clusters we can apply our Caccioppoli \eqref{CaccioppoliEstimate1} estimate and we get 
\begin{equation*}
  \begin{aligned} \Vert \nabla u_2\Vert_{\mL^{2}(\omega_2)}&=\Vert \nabla (u_1-v_1)\Vert_{\mL^{2}(\omega_2)}  \leq \frac{1}{\mrm{dist}(\partial \omega_1, \omega_2)}  \Vert u_1-v_1 \Vert_{\mL^{2}(\omega_1)} \\
  &\leq \frac{p}{\delta} \frac{\sqrt{d}}{\pi\ell}2(\eta +1)\delta \Vert \nabla u_1\Vert_{\mL^{2}(\omega_1)}\\
  &\leq c \frac{p}{l} \Vert\nabla u_1\Vert_{\mL^{2}(\omega_1)}
    \end{aligned}
\end{equation*}
with $c=2\sqrt{d}(\eta+1)/\pi $, this leads to $u_1\in \mathcal{H}(\omega_2)\cap \mH^1(\omega_2)$ and we can again apply lemma \ref{PoincareApprox}. This way, we recursively build for any $i\leq p$, $u_i:= (u_{i-1}-v_{i-1})|_{\omega_i}$ and we have $u_i\in\mathcal{H}(\omega_i)\cap\mH^1(\omega_i)$. More precisely for any $i\leq p$ we can find $v_0\in\mV_0, \dots ,v_{i-1}\in V_{i-1}$ such that $\mathrm{dim}(V_k)\leq l^d$ for all $k\leq i-1$ and  
\[
\begin{aligned}
    \Vert\nabla u_i \Vert_{\mL^2(\omega_i)} &= \Vert \nabla (u_{i-1}|_{\omega_i}-v_{i-1}|_{\omega_i} )\Vert_{\mL^2(\omega_i)} \leq \frac{cp}{\ell} \Vert \nabla u_{i-1}\Vert_{\mL^2(\omega_{i-1})} \\
    &\leq \dots \leq (\frac{cp}{\ell})^{i-1} \Vert \nabla u_1\Vert_{\mL^2(\omega_1)}.
\end{aligned} \]
Because $u_i\in\mathcal{H}(\omega_i)\cap\mH^1(\omega_i)$ we can use the lemma \ref{PoincareApprox} to find $v_i\in\mV_i$ such that $\mathrm{dim}(V_i)\leq \ell^d$ and 
\begin{equation*}
  \begin{aligned}
\Vert u_i -v_i\Vert_{\mL^2(\omega_i)} &\leq \frac{\sqrt{d}}{\pi}\frac{\mrm{diam}(\omega_i)}{\ell}\Vert \nabla u_i \Vert_{\mL^2(\omega_{i})} \\
&\leq \frac{\sqrt{d}}{\pi} \frac{2\delta (\eta+1)}{\ell} (\frac{cp}{\ell})^{i-1} \Vert \nabla u_1\Vert_{\mL^2(\omega_1)}\\
&\leq \frac{\delta }{p} (\frac{cp}{ \ell})^{i} \Vert \nabla u_1\Vert_{\mL^2(\omega_1)}
     \end{aligned}
\end{equation*}

At last, using $\tau\subset \omega_p$, we get $v= (v_1+\dots+v_p)|_\tau \in \mV=(\mV_1+\dots+\mV_p)$ with $\mrm{dim}(\mV)\leq p\ell^d$ and the error estimate 
\[ \Vert \nabla (u|_\tau - v) \Vert_{\mL^2(\tau)} \leq(2\frac{(\eta +1)\sqrt{d}}{\pi} \frac{p}{\ell})^{p-1} \Vert \nabla u_1\Vert_{\mL^2(\omega_1)} .\]
Using one last times the lemma \ref{PoincareApprox} on $\omega_p$ we get using the condition on $\mrm{diam}(\tau)$ 
\begin{equation*}
  \begin{aligned}  \Vert u|_\tau -v \Vert_{\mL^2(\tau)} &\leq \frac{\sqrt{d}}{\pi}\frac{\mrm{diam}(\tau)}{\ell} \Big((2\frac{(\eta +1)\sqrt{d}}{\pi} \frac{p}{\ell})^{p-1}||\nabla u_1||_{L^2(\omega_1)} \Big)\\
  &\leq \frac{\delta}{p} (\frac{ 2(\eta +1)\sqrt{d}}{\pi} \frac{p}{\ell})^{p} \Vert \nabla u_1\Vert_{\mL^2(\Omega)} .\end{aligned}
\end{equation*}
In order to finish and get a bound independent of the solution we will use the separation between $\omega_1$ and $\sigma$ ($\mrm{dist}(\omega_{\perp, 1}, \sigma)=\delta/p> 0$) in a final Caccioppoli inequality
\[ ||\nabla u_1||_{\mL^2(\omega_1)}\leq \frac{p}{\delta}\Vert u\Vert_{\mL^2(\omega_\delta)} .\]
Finally, because of our assumptions \eqref{conditionbeta} $\inf_{\Omega}(\beta)\geq\beta_0 >0$ we get that 
\begin{equation*}
  \begin{aligned}
  \Vert u\Vert_{\mL^2(\Omega)} \Vert f \Vert_{\mL^2(\Omega)} &\geq a(u,u) = \varepsilon\langle \nabla u, \nabla u \rangle_{\mL^2(\Omega )} +\int_\Omega \beta u^2 d\bx \\
  &\geq \beta_0||u||_{\mL^2(\Omega)}^2.
  \end{aligned}
\end{equation*}
Hence
\begin{equation}
    \Vert \nabla (u|_\tau - v) \Vert_{\mL^2(\Omega)} \leq\frac{1}{\beta_0}\frac{p}{\delta}(2\frac{(\eta +1)\sqrt{d}}{\pi} \frac{p}{\ell})^{p-1} \Vert f\Vert_{\mL^2(\Omega)}
\label{ineq:nablau}
\end{equation}
\begin{equation}
     \Vert u|_\tau - v \Vert_{\mL^2(\Omega)} \leq \frac{1}{\beta_0} (\frac{ 2(\eta +1)\sqrt{d}}{\pi} \frac{p}{\ell})^{p} \Vert f\Vert_{\mL^2(\Omega)}
\label{ineq:u}
\end{equation}
Finally, with $c:= 2\frac{(\eta+1)\sqrt{d}}{\pi}$, if we take $\ell = \lceil \frac{ cp^2}{q(p-1)} \rceil  $
one has \[ c\frac{p}{\ell} \leq  cp \frac{q(p-1)}{cp^2} = q(1-\frac{1}{p})\]
and so \[(c\frac{p}{\ell})^p\leq q^p(1-\frac{1}{p})^p \leq q^p e^{-1}.\]
One one hand, because $p>2$ we have
\begin{equation*}
  \begin{aligned}
    \ell &\leq \frac{cp^2}{q(p-1)}+1 \leq \frac{2c}{q}p +\frac{p}{2} \leq p(2\frac{c}{q}+\frac{1}{2}).
    \end{aligned}
\end{equation*}
However, using $\mrm{dim}(\mV)\leq p\ell^d$ we get
\[\mrm{dim}(\mV)\leq (\frac{2c}{q} +\frac{1}{2})^d p^{d+1}  \]
which concludes the proof with constants $C = \frac{1}{e\beta_0}$ and $C_{\mrm{dim}} = (\frac{2c}{q} + \frac{1}{2})^d$.\hfill
\end{proof}

The result differs from \cite[Thm.1]{MR2606959} due to the scale of the Péclet number, translating in a $\varepsilon^{-1}$ term in the error bound if we only rely on the coercivity. Instead, one must rely on the norm $\Vert f\Vert_{\mL^2(\Omega)}$ rather than $\Vert f\Vert_{\mH^{-1}(\Omega)}$ as we can see in \eqref{ineq:nablau} and \eqref{ineq:u}. This change of reference frame compelled us to consider a maximal approximation space $\omega_1\subsetneq\omega_\delta$, which enabled us to establish a Caccioppoli inequality. Moreover, this result holds only for the specific domains described above: tubes aligned with the advection stream.
\subsection{Partitioning}
Hierarchical compression exploit local regularity properties of the solution such as proposition \ref{theoreme} in order to infer a low rank block structure. Because of this locality, we are incline to work on subdomains $\tau$ and $\sigma$ of $\Omega$. However, as we cannot test every partitioning we narrow the search using a cluster tree $\mathcal{T}(\Omega)$. 
Let us briefly recall this concept. Given a set of points $X=\{\mathbf{x}_i\}_{i\in\mathcal{I}}$ in $\Omega$ where $\mathcal{I}\subset \mathbb{N}$ is finite, a cluster tree $\mathcal{T}(\mathcal{I})$ is a tree satisfying 
\begin{itemize}
    \item root$(\mathcal{T}(\mathcal{I}))=\mathcal{I}$.
    \item For $t\in \mathcal{T}(\mathcal{I})$, if $\mrm{sons}(t)\neq \emptyset$ then $t=\{t' |\;t'\in\mrm{sons}(t)\}$.
    \item For $t\in \mathcal{T}(\mathcal{I})$, if $\mrm{sons}(t)= \emptyset$ then $\#t\leq \bm{n}_{min}$.
\end{itemize}
The nodes of the tree are called clusters and are subsets of $\mathcal{I}$. The parameter $\bm{n}_{min}$ determines the maximal size of the leaves. This way we can split the index set $\mathcal{I}\times\mathcal{I}$ of a matrix $A\in \mathbb{R}^{\mathcal{I}\times\mathcal{I}} $ into a partition \[ P = \{t\times s , \; t,s\in \mathcal{T}(\mathcal{I})\}.\] For simplicity, using the correspondence between a domain \( \tau \) and the discretization points $t$ it contains, we will write \( \mathcal{T}(\Omega) \) instead of \( \mathcal{T}(\mathcal{I}) \). This allows us to focus more on the geometric partitioning rather than the discrete indexing.
In our setting, the children of a node in the cluster tree are obtained through a splitting strategy. This strategy defines how a given domain, or the associated index set, is recursively subdivided into smaller subdomains.
More precisely, we focus on a geometric perspective, where the splitting is defined on the spatial domain $\Omega$ rather than directly on the index set. We consider a splitting function
\[
\psi : \mathcal{P}(\Omega) \rightarrow \mathcal{P}(\Omega)\times \mathcal{P}(\Omega)
\]
that maps a domain $\tau \subset \Omega$ to a pair of non-overlapping subdomains $(\tau_1, \tau_2)$ such that
\[
\tau_1 \cap \tau_2 = \emptyset \quad \mrm{and} \quad \tau_1 \cup \tau_2 = \tau.
\]
This function determines how each domain is divided into two children, and recursively applied it defines the entire cluster tree structure.

We've seen from our analysis that in order to obtain a Caccioppoli inequality we had to consider tube clusters introduced in definition \ref{tube-cluster}. Hence, in order to apply our results, each node of the cluster tree must be a tube cluster, meaning that the splitting strategy must be chosen accordingly. For instance, in the context of elliptic problems, the most commonly used method is the Principal Component Analysis (PCA). However, this approach is not suitable in our case, as we aim to construct clusters that align with the field lines.
For a constant advection it is actually rather easy to produce such a cluster tree, in the following we take $\bb=(1, 0, \dots , 0)$ (by eventually rotating  and rescaling the domain). Let us consider a tube cluster $\tau =(\mathbb{R}\times\tau_{\perp})\cap\Omega$. It appears that any splitting $\psi:\tau_\perp\mapsto \{ \tau_{ \perp}^1, \dots, \tau_\perp^n \}$, with $\tau_{ \perp}^i\cap \tau_\perp^j = \emptyset$ if $i\neq j$ and $\bigcup_i \tau_\perp^i = \tau_\perp$, produces subdomains $\tau^i=(\mathbb{R}\times\tau_{\perp}^i)\cap\Omega$ which are tube clusters as illustrated in Figure \ref{fig:split_orth}.\tdplotsetmaincoords{60}{20}
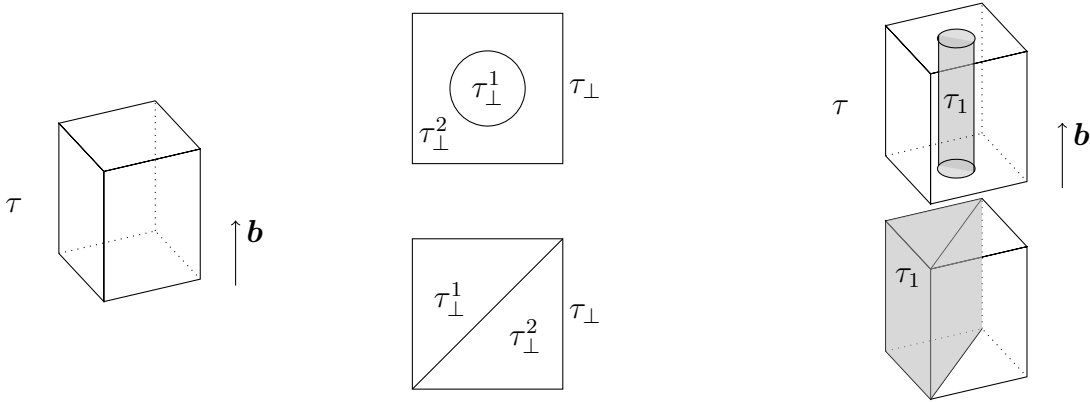
\begin{figure}
\begin{minipage}{0.3\textwidth}
\begin{tikzpicture}[tdplot_main_coords]
\draw (1,0,2) -- (0,1,2) -- (-1,0,2) -- (0,-1,2) -- cycle
    (-1,0,0) -- (0,-1,0) -- (0,-1,2) -- (-1,0,2) -- cycle
    (1,0,2) -- (0,-1,2) -- (0,-1,0) -- (1,0,0) -- cycle;

\draw (-2,1,0) node{$\tau$} ; 
\draw[dotted](-1, 0,0)--(0, 1, 0)--(1,0,0); 
\draw[dotted](0,1,0)--(0,1,2) ;
\draw[->](1.5, 0, 0)--(1.5,0,0.5)node[above right]{$\bb$}--(1.5,0,1) ; 
\end{tikzpicture}
\end{minipage}%
\hspace{0.5cm}
\begin{minipage}{0.3\textwidth}
\begin{tikzpicture}
    \draw(0,0) rectangle (2, 2) ; 
    \draw (2.3, 1) node{$\tau_\perp$} ; 
    \draw (1,1) circle(0.5) ;
    \draw (1,1)node{$\tau_\perp^1$} ; 
    \draw(0.3,0.4) node{$\tau_\perp^2$} ;

    \draw(0,-3) rectangle (2, -1) ; 
    \draw (2.3, -2) node{$\tau_\perp$} ; 
    \draw(0, -3) -- (2,-1) ; 
    \draw (0.5,-1.8) node {$\tau_\perp^1$} ; 
    \draw (1.5,-2.3) node {$\tau_\perp^2$} ; 
\end{tikzpicture}
\end{minipage}%
\hspace{0.5cm}
\begin{minipage}{0.3\textwidth}
\begin{tikzpicture}[tdplot_main_coords]
\draw (1,0,2) -- (0,1,2) -- (-1,0,2) -- (0,-1,2) -- cycle
    (-1,0,0) -- (0,-1,0) -- (0,-1,2) -- (-1,0,2) -- cycle
    (1,0,2) -- (0,-1,2) -- (0,-1,0) -- (1,0,0) -- cycle;
\fill[gray!30, opacity=0.8] (0,0,0) circle (0.25);

\begin{scope}[canvas is xy plane at z=2]
    \fill[gray!30, opacity=0.8] (0,0) circle (0.25);
\end{scope}

\fill[gray!50, opacity=0.6] 
    (0.25, 0, 0) -- (0.25, 0, 2) -- (-0.25, 0, 2) -- (-0.25, 0, 0) -- cycle;

\draw (0,0,0) circle (0.25); 
\draw (0,0,2) circle (0.25); 
\draw (0.25, 0, 0) -- (0.25, 0, 2); 
\draw (-0.25, 0, 0) -- (-0.25, 0, 2); 
\draw(0,0,1) node{$\tau_1$} ;

\draw (-2,1,0) node{$\tau$} ; 
\draw[dotted](-1, 0,0)--(0, 1, 0)--(1,0,0); 
\draw[dotted](0,1,0)--(0,1,2) ;
\draw[->](1.5, 0, 0)--(1.5,0,0.5)node[above right]{$\bb$}--(1.5,0,1) ; 

\draw (1,0,-1) -- (0,1,-1) -- (-1,0,-1) -- (0,-1,-1) -- cycle
    (-1,0,-3) -- (0,-1,-3) -- (0,-1,-1) -- (-1,0,-1) -- cycle
    (1,0,-1) -- (0,-1,-1) -- (0,-1,-3) -- (1,0,-3) -- cycle;
\draw(0,1,-1)--(0,-1,-1) ;
\draw(0,1,-3)--(0,-1,-3) ;
\draw[dotted](-1, 0,-3)--(0, 1, -3)--(1,0,-3); 
\draw[dotted](0,1,-3)--(0,1,-1) ;
\fill[gray!50, opacity=0.6] (0,1,-1)--(0,-1,-1)--(0,-1,-3)--(0,1,-3)--cycle ; 
\fill[gray!50, opacity=0.6](0,1,-1)--(0,-1,-1)-- (-1,0,-1)-- cycle ; 
\fill[gray!50, opacity=0.6](-1,0,-1)--(0,-1,-1)--(0,-1,-3)--(-1,0,-3)--cycle ; 
\draw (-0.5, -0.5, -1.5) node{$\tau_1$} ; 
\end{tikzpicture}
\end{minipage}%
\caption{Two splitting producing tube clusters}
\label{fig:split_orth}
\end{figure}Because $\Omega$ is bounded, possibly after extending it in the direction $b^\perp$, it is a tube cluster. More precisely, $\Omega = (\mathbb{R}\times\Omega_\perp )\cap \Omega$ where $\Omega_\perp$ does not contains $\bb$, for simplicity we will assume that $\Omega_\perp$ is normal to $\bb$. By splitting $\Omega_\perp$ we produce tube clusters on $\Omega$ and by iterating this procedure, we build the cluster tree $\mathcal{T}(\Omega)$. The cluster tree we build this way is actually a tube cluster tree: for all nodes $\tau$ of $\mathcal{T}(\Omega)$, $\tau$ is a tube cluster. One way to proceed is to sort the points according to their projection on a vector of $\tau_\perp$ as we can see in the pseudo code 
\begin{algorithm}
\caption{Split the points $t$ of $\tau$ into two parts using projection on $b^\perp$}
\begin{algorithmic}[1]
\State Compute a vector $b^\perp$ orthogonal to $b$;
\State Sort the points $p\in t$ by increasing order of $\langle p, b^\perp \rangle$, using
\[
\texttt{sort}(t.\texttt{begin()}, t.\texttt{end()}, (p_1, p_2) \mapsto \langle p_1, b^\perp \rangle < \langle p_2, b^\perp \rangle);
\]
\State Let $t_1$ be the first half of $t$, and $t_2$ the second half;
\State \Return $(t_1, t_2)$
\end{algorithmic}
\end{algorithm}
Equipped with this tube cluster tree, we obtain a block representation of the finite element matrix and its inverse. We will denote $A|_{\tau\times\sigma}$ the block corresponding to the interaction of the clusters $\tau$ and $\sigma$. It is crucial to understand that partitioning induces a renumbering of the points, thereby determining the structure of the matrix as illustrated In Figure \ref{fig:partition1}. This highlights how our partitioning strategy significantly impacts the block structure of the matrix.

\begin{figure}
    \centering
    \begin{minipage}[c]{0.33\textwidth}
        \centering
        \begin{tikzpicture}[scale=0.8]
            \draw(0,0) rectangle (2, 2); 
            \draw(1, 0)--(1, 2); 
            \draw (0.5, 1) node{$\tau_1$}; 
            \draw (1.5, 1) node{$\tau_2$}; 
            \draw(1.6, 1.5) node{\tiny$\times$} ; 
            \draw(1.6, 1.5) node[above]{$x$} ; 
            \draw(0.6, 1.5) node{\tiny$\times$} ; 
            \draw(0.6, 1.5) node[above]{$y$} ; 
            \draw[thin, dashed, ->](1.6, 1.5)--(2.6, 1)node[color= blue, right]{$x_j$} ; 
            \draw[thin, dashed, ->](0.6, 1.5)--(-0.6, 1)node[color= blue, left]{$y_k$} ; 
            
            \draw(0,-3) rectangle (2, -1); 
            \draw(1.6, 1.5-3) node{\tiny$\times$} ; 
            \draw(1.6, 1.5-3) node[above]{$x$} ; 
            \draw(0.6, 1.5-3) node{\tiny$\times$} ; 
            \draw(0.6, 1.5-3) node[above]{$y$} ; 
            \draw[thin, dashed, ->](1.6, 1.5-3)--(2.6, -1)node[color= red, right]{$x_i$} ; 
                        \draw[thin, dashed, ->](0.6, 1.5-3)--(-0.6, -1)node[color= red, left]{$y_l$} ; 

            \draw(0, -2)--(2, -2); 
            \draw (1, 1.5-3) node{$\tau_1$}; 
            \draw (1, 0.5-3) node{$\tau_2$}; 
        \end{tikzpicture}
    \end{minipage}
    \hspace{0.1cm}
    \begin{minipage}[c]{0.33\textwidth}
        \centering
        \begin{tikzpicture}[scale = 0.8]
            \draw(0,0) rectangle (2,2) ; 
            \draw(0,1)--(2,1) ; 
            \draw(1,0)--(1,2) ; 
            \draw(1.8,0)node{\tiny$\times$} ; 
            \draw(1.8,0)node[below,blue]{$x_j$} ; 
            \draw (2, 0.2)node{\tiny$\times$} ; 
            \draw (2, 0.2)node[right, blue]{$x_j$} ; 
            \draw (0, 1.6)node{\tiny$\times$} ; 
            \draw (-0.3, 1.6)node[ blue]{$y_k$} ;
            \draw (0.4, 2.0)node{\tiny$\times$} ; 
            \draw (0.4, 2.0)node[ blue, above]{$y_k$} ; 
            \draw(0,0-3) rectangle (2,2-3) ; 
            \draw(0,1-3)--(2,1-3) ; 
            \draw(1,0-3)--(1,2-3) ; 
            \draw(0,-1.8)node{\tiny$\times$} ; 
            \draw(0,-1.8)node[left,red]{$x_i$} ; 
            \draw(0.8, -1)node{\tiny$\times$} ; 
            \draw(0.8, -1)node[above,red]{$x_i$} ; 
            \draw(0,-1.2)node{\tiny$\times$} ; 
            \draw(0,-1.2)node[left,red]{$y_l$} ; 
            \draw(0.2, -1)node{\tiny$\times$} ; 
            \draw(0.2, -1)node[above,red]{$y_l$} ;

        \end{tikzpicture}
    \end{minipage}
    \caption{Impact of the numbering, on the left two splitting and on the right the block matrix induced}
    \label{fig:partition1}
\end{figure}
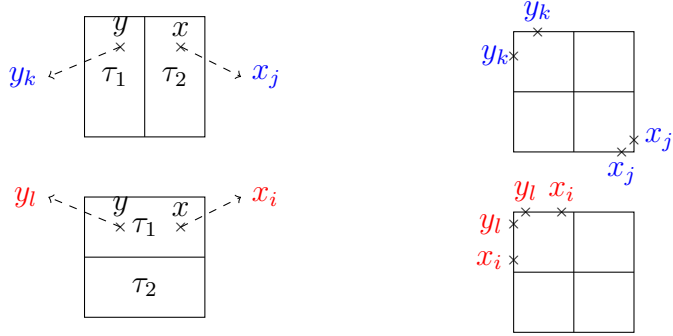
By construction, nodes $\tau$ and $\sigma$ of $\mathcal{T}(\Omega)$ are tubes aligned with the convection. In particular, if $\mathrm{dist}(\tau,\sigma) >0$ we can use proposition \ref{theoreme}. Following the approach of Steffen B\"orm in \cite{MR2606959} one can prove via Clement type operator the following result
\begin{prop}
    Let $\eta>0, q \in(0,1)$,$C>0$ a constant depending only on $\eta, q $ and $\Omega$, $\tau$ and $\sigma$ tubes clusters of the form of definition \ref{tube-cluster} satisfying $\mrm{diam}(\tau)\leq 2\eta \mrm{dist}(\tau, \sigma)$. Then, for all $p\in\mathbb{N}$ with $p>2$ one can find some matrices $U_{t, k}\in\mathbb{R}^{|\tau|\times k}$ and $V_{s, k}\in\mathbb{R}^{|\sigma|\times k}$ with $k<C_{dim}p^{d+1}$ such that 
    \begin{equation}
        \Vert (A^{-1}|_{\tau\times\sigma} - U_{t, k}V_{s, k}^T)x\Vert \leq Cq^p\Vert x\Vert \ \ \ \forall x \in \mathbb{R}^{|\sigma|}.
    \end{equation}
\label{gros_th}
\end{prop}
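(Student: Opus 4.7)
The plan is to lift the block $A^{-1}|_{\tau\times\sigma}$ to the continuous PDE level, apply Proposition~\ref{theoreme}, and transfer the resulting low-dimensional approximation back to the discrete space via a Cl\'ement-type quasi-interpolation, keeping every intermediate map linear so that the rank bound $k\leq C_{\mrm{dim}}p^{d+1}$ is transparent. Given $x\in\RR^{|\sigma|}$, the first step is to associate with it a continuous source $f_x\in\mL^2(\Omega)$ with $\supp(f_x)$ contained in a small neighborhood of $\sigma$, such that the $\mathbb{P}_1$ Galerkin solution of $a(u_h,v_h)=\langle f_x,v_h\rangle_{\mL^2(\Omega)}$ has coefficient restriction to $\tau$ exactly equal to $A^{-1}|_{\tau\times\sigma}x$. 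A convenient choice is $f_x:=\sum_{i}(M^{-1}\tilde{x})_i\varphi_i$, with $\tilde{x}$ the zero-extension of $x$ and $M$ the mass matrix; this makes $x\mapsto f_x$ linear and gives $\Vert f_x\Vert_{\mL^2(\Omega)}\lesssim h^{-d/2}\Vert x\Vert$ on a quasi-uniform mesh.

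Let $u\in\mH^1_0(\Omega)$ solve the continuous problem with right-hand side $f_x$. The admissibility $\mrm{diam}(\tau)\leq 2\eta\,\mrm{dist}(\tau,\sigma)$ is precisely the hypothesis of Proposition~\ref{theoreme}, and inspecting its proof shows that the approximant $v\in\mV$ is produced by alternating Caccioppoli estimates and Poincar\'e--Wirtinger projections on the nested tubes $\omega_1\supset\cdots\supset\omega_p$, each of which is linear in $u$. One thus obtains a linear map $f_x\mapsto v\in\mV$ with $\mrm{dim}(\mV)\leq C_{\mrm{dim}}p^{d+1}$ and $\Vert u-v\Vert_{\mL^2(\tau)}\leq Cq^p\Vert f_x\Vert_{\mL^2(\Omega)}$. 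Composing with a Cl\'ement quasi-interpolant $I_h:\mL^2(\tau)\to \Vh|_\tau$ yields a discrete linear map $x\mapsto I_h v\in I_h(\mV)$ whose image has dimension at most $k\leq C_{\mrm{dim}}p^{d+1}$; writing the coefficient representation of this composite as a product $U_{t,k}V_{s,k}^{T}$ is then a purely algebraic fact, with the columns of $U_{t,k}$ encoding a basis of the coefficient image of $I_h(\mV)|_\tau$ and $V_{s,k}^T$ encoding the action $x\mapsto$ coordinates of $I_h v$ in this basis.

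For the error estimate, I would combine the continuous bound $\Vert u-v\Vert_{\mL^2(\tau)}\leq Cq^p\Vert f_x\Vert_{\mL^2(\Omega)}$, the standard FE estimate comparing $u_h$ and $u$ on the separated cluster $\tau$, and the stability of $I_h$ on $u_h$; the quasi-uniform norm equivalence $\Vert w_h\Vert_{\mL^2(\tau)}\sim h^{d/2}\Vert w\Vert$ for FE coefficients $w$ then converts the $\mL^2$ estimate into the desired Euclidean one, the two $h^{d/2}$ factors cancelling between $f_x$ and the solution. The main obstacle I anticipate is the discrete-to-continuous transition: $u_h$ is only discretely harmonic off the discrete support of $f_x$, so Proposition~\ref{theoreme} cannot be applied directly to $u_h$, and the approximant $I_h v$ lives a priori slightly outside the subspace matching $u_h|_\tau$. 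The standard remedy, following B\"orm~\cite{MR2606959}, is to enlarge the ring of intermediate tubes $\omega_i$ by a margin of order $h$ so that the Cl\'ement interpolant remains inside the harmonic subdomain. This is where mesh-dependent constants must be controlled carefully, and it is the structural reason for the non-arbitrary lower bound on the leaf sizes of the cluster tree alluded to in the introduction.
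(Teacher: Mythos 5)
Your plan --- lift the block $A^{-1}|_{\tau\times\sigma}$ to the continuous problem, apply Proposition~\ref{theoreme}, and push the approximant back into $\Vh$ with a Cl\'ement operator --- is not the argument the paper points to when it cites B\"orm, and it carries a gap that this structure cannot absorb.

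Two concrete problems. First, the source $f_x=\sum_i(M^{-1}\tilde x)_i\varphi_i$ does \emph{not} satisfy $\supp(f_x)\subset\sigma$: $M^{-1}$ is a globally dense matrix (its entries only decay exponentially), so $M^{-1}\tilde x$ has nonzero components arbitrarily far from $\sigma$, and the resulting $u$ is not $a$-harmonic on the outermost tube $\omega_1$, which is the hypothesis that drives Proposition~\ref{theoreme}. Second, and structurally worse, closing your argument requires a triangle inequality of the form
\[
\|u_h-I_h v\|_{\mL^2(\tau)}\;\le\;\|u_h-u\|_{\mL^2(\tau)}+\|u-v\|_{\mL^2(\tau)}+\|v-I_h v\|_{\mL^2(\tau)} ,
\]
and the first term on the right is the FE consistency error. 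For fixed $h$ it is an $O(h^s)$ quantity that does \emph{not} shrink as $p\to\infty$, whereas the target $Cq^p$ does. So the best you can get is $Cq^p\|x\|+C'(h)\|x\|$, not $Cq^p\|x\|$ with a constant independent of $h$ as the proposition claims. This is precisely the pollution that Faustmann--Melenk--Praetorius~\cite{MR3422448} identify in the Bebendorf--Hackbusch route through the continuous Green's function, and it is the reason B\"orm~\cite{MR2606959} and~\cite{MR3422448} do not detour through $u$ at all.

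The intended proof --- which the paper gestures at by the phrase ``via Cl\'ement type operator'' --- is entirely discrete. One defines the discrete harmonic space $\mathcal{H}_h(\omega):=\{v_h\in\Vh(\Omega)\,:\,a(v_h,\varphi_h)=0\ \forall\varphi_h\in\Vh(\Omega),\ \supp(\varphi_h)\subset\omega\}$, to which $u_h=A^{-1}\tilde x$ belongs on tubes separated from $\sigma$, and proves a discrete Caccioppoli inequality for it. The P\'eclet-robust cut-off $\eta$ with $\bb\cdot\nabla\eta=0$ enters here exactly as in Section~2, but $\eta^2u_h\notin\Vh$, so one must test with a Cl\'ement or Scott--Zhang projection and use inverse estimates --- this costs an $O(h)$ margin in the tube inflation $\delta$, but no $h$-dependent constant. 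One then proves a discrete analogue of Lemma~\ref{PoincareApprox}, where the piecewise-constant approximants are mapped into $\Vh$ by a Cl\'ement operator in a way that preserves discrete harmonicity on a slightly smaller tube. Iterating these two ingredients exactly as in the proof of Proposition~\ref{theoreme} yields a rank-$k$ linear map approximating $u_h$ itself, with an $h$-free bound $\|u_h-v_h\|_{\mL^2(\tau)}\le Cq^p\|f\|_{\mL^2(\Omega)}$, and the $h^{d/2}$-cancellation you describe then cleanly converts this to the Euclidean estimate on $A^{-1}|_{\tau\times\sigma}$. Your proposed remedy --- enlarging the $\omega_i$ by $O(h)$ --- is indeed part of this discrete picture, but it only fixes the test-function mismatch in the discrete Caccioppoli step; it does nothing against the FE error $\|u_h-u\|$, which must simply be absent from the argument.
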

The main difference with \cite{MR2606959} lies in the fact that our cluster are of the form of definition \ref{tube-cluster} and that the dependence of our constants $C$ and $C_{dim}$ on $\eta$ is slightly different. 
\begin{rem}
The tube cluster hypothesis inherently constrains the minimum leaf size in the cluster tree. Since our analysis is restricted to tube clusters, and our definition requires them to intersect both the inflow and outflow boundaries of the domain (i.e., regions where $\bb\cdot \bm{n}_{\partial\Omega}\neq 0$), the characteristic length of the flow imposes a lower bound on the leaf size. For instance with a cartesian grid of $n\times n$ points of $[0,1]^2$ points, $\bb = (1,0)$ leads to a minimal size of $n$ while $\bb=(1,1)$ doesn't have a constant minimal size.

As a consequence, refining the mesh leads to larger dense blocks in the hierarchical representation. While our theoretical framework does not justify taking leaf sizes smaller than the characteristic length, our numerical experiments suggest that smaller values, down to one-fifth of the characteristic length, can still yield meaningful approximations, though always in relation to the flow structure.
    \label{restriction}
\end{rem} 
\section{Study of the non constant case}
\label{section3}

In the following we are going to extend this result to the more general case $\bb\in C^1(\mathbb{R}^d)$ and not vanishing, we still assume that $\mathrm{div}(\bb/2)-\beta \leq -\beta_0$ with $\beta_0>0$. Namely we would like to extend our definition of tube clusters such that for two tube clusters $\tau\subsetneq\tau_\delta$ we can prove the existence of a cutoff $\eta$ verifying \[ \int_{\tau_\delta} \eta u^2\bb\cdot \nabla \eta d\bx = 0.\]
\subsection{P\'eclet-uniform Caccioppoli inequality}
We consider a vector field $\bb\in C^1(\mathbb{R}^d)^d$ such that $\bb(x)\neq 0 \quad \forall x\in\mathbb{R}^d$ is not vanishing. We need to properly define the field line as we can no longer take $\bb$ as a basis vector.
For any $x\in\mathbb{R}^d$, there exists an interval $I\subset \mathbb{R}$, $0\in I$, and $\varphi : I\rightarrow \mathbb{R}$ a $C^1$ map such that $\partial_t \varphi(t) = \bb(\varphi(t))$ $\forall t\in I$ and $\varphi(0)=x$. The couple $(I, \varphi)$ is called a solution of the dynamical system induced by $\bb$, it is said to be a maximal solution if for any other solution $(J, \psi)$ such that $I\subset J$ and $\psi|_I=\varphi$, then $I=J$. For all $x\in\mathbb{R}^d$ we take $\mathcal{I}(x)\subset \mathbb{R}$ and $t\mapsto \mathbf{\phi} (x, t)\in\mathbb{R}^d$ such that $(\mathcal{I}(x), \mathbf{\phi}(x, \cdot))$ is the maximal solution verifying
\begin{equation}
    \begin{aligned}
    \partial_t \mathbf{\phi}(x, t)&= \bb(\mathbf{\phi}(x, t))\\
    \mathbf{\phi}(x, 0)& = x
\end{aligned}
\label{fieldline}
\end{equation}
In Figure \ref{maxdomain} we illustrate the difference between one solution and the maximal solution for some $x\in \Omega$.
\begin{figure}
    \centering
  
\begin{tikzpicture}
    \draw[thick] (0,0) rectangle (5,3);
    
    \draw[domain=0:1.5,smooth,variable=\x,red,dotted, thick] 
        plot ({\x}, {1.5 + sin(deg(2*pi*\x/5))});
    \draw[domain=3.5:5,smooth,variable=\x,red,dotted, thick] 
        plot ({\x}, {1.5 + sin(deg(2*pi*\x/5))});
    \draw[domain=1.5:3.5,smooth,variable=\x,blue,thick] 
        plot ({\x}, {1.5 + sin(deg(2*pi*\x/5))});
    \draw (4.5, 2) node[thick]{$\Omega$} ; 

    \node[rotate=45] at (2.5,1.5) {$\times$};
     \draw (2.5, 1.5) node[above right]{$x$} ; 

    \draw[red, dotted, thick](7, 1.5)--(8.5, 1.5) ; 
        \draw[red, dotted, thick](12, 1.5)--(10.5, 1.5) ; 

    \draw[blue, thick](8.5, 1.5)--(10.5, 1.5) ; 
    \draw(9.5, 1.5) node{$\times$} ;  
    \draw(9.5, 1.5) node[above]{$0$} ;
    \draw(9.5, 1.5)[blue] node[below]{$\mathcal{I}$} ;
    \draw(7.5, 1.5)[red] node[below]{$\mathcal{I}(x)$} ;
    
\end{tikzpicture}
    \caption{A \textcolor{blue}{solution} and the \textcolor{red}{maximal solution} of \eqref{fieldline}}
    \label{maxdomain}
\end{figure}
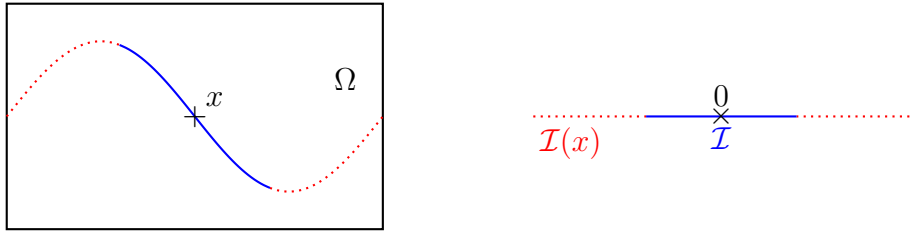
With this notion we can properly define a trajectory of the field $\bb$ as a curve $\mathbf{\phi}(x, \mathcal{I}(x))$ (if there are no ambiguity we will only write $\mathbf{\phi}(x, \mathcal{I})$). From our assumptions on $\bb$ we get that the trajectories are $C^1$ curves. Because of Cauchy-Lipschitz Theorem we have a local unicity of the solution leading for any $x\in \Omega$ to the unicity of the maximal solution $(\mathcal{I}(x), \phi)$. In other words, for two maximal solutions such that $\mathbf{\phi}(x, \mathcal{I}) \cap \mathbf{\phi}(y, \mathcal{I})\neq \emptyset \Rightarrow\mathbf{\phi}(x, \mathcal{I})=\mathbf{\phi}(y, \mathcal{I})$. From there we get that $\{ \mathbf{\phi}(x, \mathcal{I}) \}_{x\in \mathbb{R}^d}$ is a partition of $\mathbb{R}^d$.
With this notion of trajectory, because they partition the domain we can consider the equivalence relation 
\[ x\sim y \Leftrightarrow\mathbf{\phi}(x, \mathcal{I})=\mathbf{\phi}(y, \mathcal{I}). \]
This naturally defines a partition of the space into equivalence classes, where each class corresponds to a distinct trajectory of the flow. The quotient space of the field lines plays a role analogous to $\Omega_\perp$ for $\bb=(1, 0,\dots, 0)$ when $\Omega=(\mathbb{R}\times\Omega_\perp)\cap\Omega$, as it represents the space transversal to the flow trajectories.

In this way, we want to define a tube cluster $\tau$ as a domain satisfying the following property: for all $x \in \tau$, if $y = \phi(x, t_y)$ and $y \in \tau$, then the streamline portion $\phi(x, [0, t_y])$ is entirely contained in $\tau$.

Note that the domain $\Omega$ may not be aligned with the field, typically some field lines may exit $\Omega$ and later re-enter it. This leads to tube clusters that may not be connected in $\Omega$ as depicted In Figure \ref{fig:Omegabar}. However, we can extend $\Omega$ to a larger domain $\bar{\Omega}$ defined as
\[
\bar{\Omega} =: \{ \phi(x, \mathcal{I}_\Omega(x)) \mid x \in \Omega \}
\]
where for each $x \in \Omega$, the interval $\mathcal{I}_\Omega(x) = [t_0, t_1]\subset \mathcal{I}(x)$ is such that
\[
\forall t \in \mathcal{I}(x)\setminus \mathcal{I}_\Omega(x), \quad \phi(x, t) \notin \Omega
\] and \[ \forall x\in\bar{\Omega}, \quad \forall  \mathcal{J}\subset\mathbb{N} \text{ such that } \bar{\Omega}\cap\phi(x, \mathcal{J})\subset \phi(x, \mathcal{J}) \Rightarrow \mathcal{I}_\Omega(x)\subset \mathcal{J} .\]
With this restriction on $\mathcal{I}_\Omega$ we have unicity of $\bar{\Omega}$. This construction is possible as the field line are continuous and $\Omega$ is bounded.

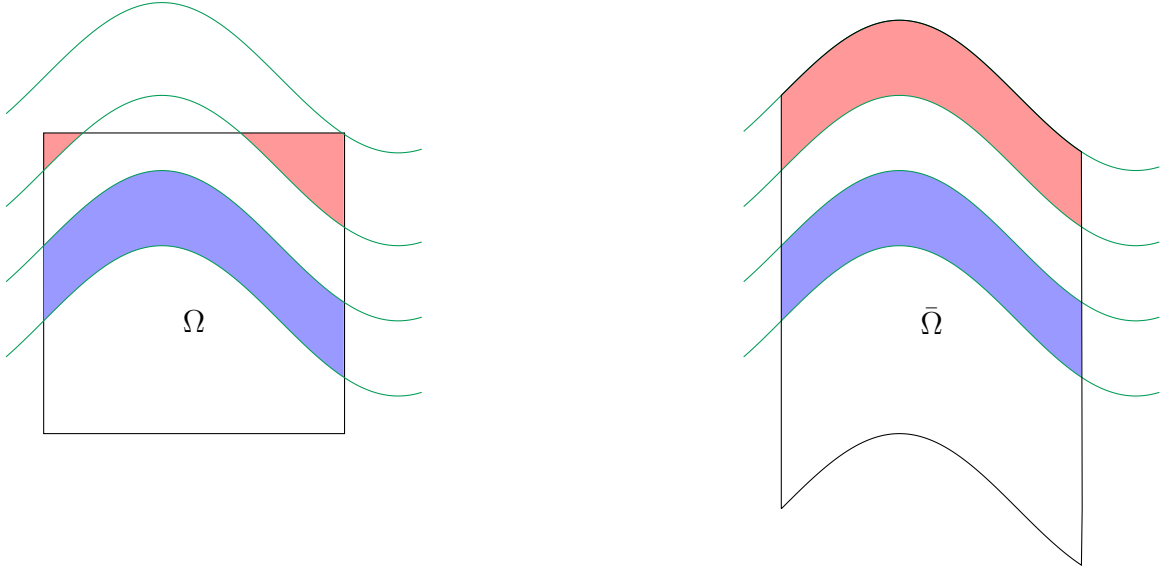
\begin{figure}
\centering
\begin{minipage}{0.48\textwidth}
    \begin{tikzpicture}
    \useasboundingbox (-0.5,-1.5) rectangle (4.5,6);
        \fill[red!40] 
            plot[domain=0:1.27*pi, samples=100] (\x, {sin(\x r)+3.5}) -- 
            plot[domain=1.27*pi:0, samples=100] (\x, {sin(\x r)+4.735}) -- 
            cycle;
        
        \fill[white] (-0.1,4) rectangle (4.1,6); 
        \draw (0,0) rectangle (4,4); 
        
        \draw[ForestGreen] plot[domain=-0.5:1.6*pi, samples=100] (\x, {sin(\x r)+3.5});
        \draw[ForestGreen] plot[domain=-0.5:1.6*pi, samples=100] (\x, {sin(\x r)+4.735});

        \fill[blue!40] 
            plot[domain=0:1.27*pi, samples=100] (\x, {sin(\x r)+1.5}) -- 
            plot[domain=1.27*pi:0, samples=100] (\x, {sin(\x r)+2.5}) -- 
            cycle;
        
        \draw[ForestGreen] plot[domain=-0.5:1.6*pi, samples=100] (\x, {sin(\x r)+2.5});
        \draw[ForestGreen] plot[domain=-0.5:1.6*pi, samples=100] (\x, {sin(\x r)+1.5});
        \draw (2,1.5) node{$\Omega$}; 
    \end{tikzpicture}
\end{minipage}
\hfill
\begin{minipage}{0.48\textwidth}
\centering
    \begin{tikzpicture}
    \fill[red!40] 
            plot[domain=0:1.27*pi, samples=100] (\x, {sin(\x r)+3.5}) -- 
            plot[domain=1.27*pi:0, samples=100] (\x, {sin(\x r)+4.5}) -- 
            cycle;
    \fill[blue!40] 
            plot[domain=0:1.27*pi, samples=100] (\x, {sin(\x r)+1.5}) -- 
            plot[domain=1.27*pi:0, samples=100] (\x, {sin(\x r)+2.5}) -- 
            cycle;
    \useasboundingbox (-0.5,-1.5) rectangle (4.5,6);
      
        \draw[ForestGreen] plot[domain=-0.5:1.6*pi, samples=100] (\x, {sin(\x r)+3.5});
        \draw[ForestGreen] plot[domain=-0.5:1.6*pi, samples=100] (\x, {sin(\x r)+4.5});

        \draw[ForestGreen] plot[domain=-0.5:1.6*pi, samples=100] (\x, {sin(\x r)+2.5});
        \draw[ForestGreen] plot[domain=-0.5:1.6*pi, samples=100] (\x, {sin(\x r)+1.5});
        \draw( 0,-1)--(0,4.5)--plot[domain=0:1.27*pi, samples=100] (\x, {sin(\x r)+4.5})--(4,-1)--plot[domain=1.27*pi:0, samples=100] (\x, {sin(\x r)-1});
                \draw (2,1.5) node{$\bar{\Omega}$}; 

    \end{tikzpicture}
\end{minipage}
    \caption{Tube clusters in $\Omega$ and $\bar{\Omega}$}
    \label{fig:Omegabar}
\end{figure}
Since the solution $u$ vanishes outside $\Omega$, considering the extension $\bar{\Omega}$ instead of $\Omega$ does not affect the analysis. In the following, we shall abuse notation and write $\Omega = \bar{\Omega}$ which allows us to give a simple definition of tube clusters.
\begin{definition}\label{truetube}
    We say that a domain $\tau \subset \Omega $ is a tube cluster (implicitly a $\bb$-tube cluster) if there exists an hyperplane $H\subset \mathbb{R}^d$, with $\boldsymbol{n}_H\cdot \bb(x) \neq 0 \ \forall x \in H$, and $\Gamma_0$ a connected domain of $H$, with  such that \[ \tau = \{ \boldsymbol{\phi}(x_0, s) \ | \ s\in \mathcal{I}(x_0),x_0\in \Gamma_0\}\cap \Omega,\]
    or simply  \[ \tau = \{ \boldsymbol{\phi}(x_0, s) \ | \ s\in \mathcal{I}_\Omega(x_0),x_0\in \Gamma_0\}.\]
        \label{def:tube_cluster_noncst}

\end{definition}

    This definition does transfer to the constant case as $H$ plays the same role as $\tau_\perp$.
Now that we have a notion of tube clusters we would like to find a proper bounding box which would allow us to establish a Caccioppoli inequality. Namely, for a tube cluster $\tau$ we want to characterize $\tau_\delta$ with $\mathrm{dist}(\tau, \partial \tau_\delta \backslash \partial \Omega) >0$, such that we can define a cut-off function $\eta$ which would satisfies \[\int_{\tau_\delta}\eta u^2 (\bb\cdot \nabla \eta) \, d\boldsymbol{x}=0 .\]

We recall that for a domain \( D \subset \mathbb{R}^d \), we define the inflow and outflow boundaries of \( D \) with respect to a vector field \( \bb \) as
\[
\partial D^- = \left\{ x \in \partial D \;\middle|\; \bb(x) \cdot \mathbf{n}_D(x) < 0 \right\}, \quad
\partial D^+ = \left\{ x \in \partial D \;\middle|\; \bb(x) \cdot \mathbf{n}_D(x) > 0 \right\},
\]
where \( \mathbf{n}_D(x) \) denotes the outward unit normal to \( D \) at the point \( x \in \partial D \).

For $\delta >0$, $H$ an hyperplane $H\subset \mathbb{R}^d$ and $\Gamma_0$ a connected part of $H$ we consider the tube cluster $ \tau = \{ \mathbf{\phi}(x_0, s) \ | \ s\in \mathcal{I}_\Omega(x_0),x_0\in \Gamma_0\}$. This way we can consider $\Gamma_{0, \delta} = \{ x\in H \ | \ \text{dist}(x, \Gamma_0)\leq \delta \}$ and define the tube cluster $\tau_\delta := \{ \mathbf{\phi}(x_0, s) \ | \ s\in \mathcal{I}_\Omega(x_0),x_0\in \Gamma_{0, \delta}\}$. This construction is illustrated In Figure \ref{fig:tube_non_const}.

\begin{figure}
    \centering
    \begin{tikzpicture}[xscale=1.5, yscale=0.8]

        \fill[blue!10] 
        plot[domain=0:pi, samples=100] (\x, {-1 + 0.5*sin(3*\x r)}) -- 
        plot[domain=pi:0, samples=100] (\x, {1 + 0.5*sin(3*\x r)}) -- 
        cycle;
    \draw[blue] plot[domain=0:pi, samples=100] (\x, {0.5*sin(3*\x r)}) node[right] {$\mathbf{\phi}(x_0, \mathcal{I})$};
    
    \draw[red, dotted] plot[domain=0:pi, samples=100] (\x, {-1 + 0.5*sin(3*\x r)}) ;
    
    \draw[red, dotted] plot[domain=0:pi, samples=100] (\x, {1 +0.5* sin(3*\x r)});
    \draw[ dashed] (0.5,-1)--(1, 2)node[right]{$H$} ; 
    \draw[purple, very thick] (0.57,-0.52)--(0.865, 1.25) node [left]{$\Gamma_{0}$} ;
        \fill[red!10] 
        plot[domain=0:pi, samples=100] (\x, {-1.3 + 0.5*sin(3*\x r)}) -- 
        plot[domain=pi:0, samples=100] (\x, {-1 + 0.5*sin(3*\x r)}) -- 
        cycle;
        \fill[red!10] 
        plot[domain=0:pi, samples=100] (\x, {1 + 0.5*sin(3*\x r)}) -- 
        plot[domain=pi:0, samples=100] (\x, {1.3 + 0.5*sin(3*\x r)}) -- 
        cycle;
    \draw [green, very thick](0.57,-0.52)--(0.52, -0.8) ; 
    \draw[green, very thick] (0.865, 1.25)--(0.915, 1.55)node[right]{$\Gamma_{0, \delta}\backslash\Gamma_0$} ;
    
    \draw (0.73, 0.4) node[very thick] {$\times$} ; 
        \draw (0.73, 0.4) node[blue, right ] {$x_0$} ;

    \draw[red, dotted] plot[domain=0:pi, samples=100] (\x, {-1.3 + 0.5*sin(3*\x r)}) ;
    
    \draw[red, dotted] plot[domain=0:pi, samples=100] (\x, {1.3 +0.5* sin(3*\x r)});

\end{tikzpicture}
    \caption{\textcolor{blue}{$\tau$} and \textcolor{red}{$\tau_\delta\backslash\tau$} .}
    \label{fig:tube_non_const}
\end{figure}
In the following, we will suppose that there exists an hyperplane $H$ of normal $\mathbf{n}$ crossed by every field line only once. Namely, that for all $x\in H$, $\mathbf{\phi}(x, \mathcal{I})\cap H = \{x\}$, $\bb(x)\cdot \mathbf{n} \neq 0$ and $\Omega\subset\{\mathbf{\phi}(x, \mathcal{I})\}_{x\in H}$. In the following, we will assume that $\Omega$ is already a tube cluster (i.e $\Omega =\hat{\Omega}$) which simplify the notation by writing $\mathcal{I}(x)$ instead of $\mathcal{I}_\Omega(x)$.

\noindent Let us build a proper cut-off function. Given a function $\eta_0\in C^1(H, \mathbb{R})$, with $\eta_0|_{\Gamma_0} = 1$ and $\eta_0|_{H\backslash \Gamma_{0, \delta} }= 0$, we define the function $\eta:\mathbb{R}^d\rightarrow \mathbb{R}$ as \[ \eta(\mathbf{\phi}(x, t)) = \eta_0(x)\  , \forall x \in H\,\, \forall t\in \mathcal{I}(x).   \ \ \ \ (\mathrm{see \ Figure \ }\ref{fig:construction_eta})\]

\tdplotsetmaincoords{50}{120} 
\begin{figure}
    \centering
        \begin{minipage}{0.48\textwidth}
        \centering

    \begin{tikzpicture}[tdplot_main_coords, scale=0.9]
  \draw[red] plot[domain=0:3*pi, samples=100] (\x, {0.5*sin(\x r)}, 0) ;
  \draw[blue] plot[domain=0:3*pi, samples=100] (\x, {0.5*sin(\x r)+1}, 0) ;
  \draw[blue] plot[domain=0:3*pi, samples=100] (\x, {0.5*sin(\x r)+2}, 0) ;
    \fill[blue, opacity=0.2]
    plot[domain=0:3*pi, samples=100]
      (\x, {0.5*sin(\x r)+1}, 0)
    --
    plot[domain=3*pi:0, samples=100]
      (\x, {0.5*sin(\x r)+2}, 0)
    -- cycle;
  \draw[red] plot[domain=0:3*pi, samples=100] (\x, {0.5*sin(\x r)+3}, 0) ;
  \draw [very thick , purple] (5,-1,0)--(5,3,0)node[right]{$\Gamma_{0, \delta}$} ;
  \draw (5,-1,0.5) node[right]{$\eta_0$}; 
  \fill[red, opacity = 0.3](5,-1,0)--(5,3,0) -- (5,3,1)--(5,-1,1)--(5,-1,0) ; 
  \draw[very thick](5,-0.5,0)--(5,0.5, 1)--(5,1.5, 1)--(5,2.5,0) ; 
  \draw[ForestGreen,very thick] plot[domain=0:3*pi, samples=100] (\x, {0.5*sin(\x r)+1}, 1) ;
  \draw[ForestGreen, very thick] plot[domain=0:3*pi, samples=100] (\x, {0.5*sin(\x r)+2}, 1) ;
\draw[dotted, ForestGreen, very thick] (8, {0.5*sin(8 r)} ,0) -- (8, {0.5*sin(8 r) + 1} ,1)--(8, {0.5*sin(8 r) + 2} ,1) --(8, {0.5*sin(8 r) + 3} ,0)node[above, ForestGreen]{$\eta$};
\draw[dotted, ForestGreen, very thick] (3, {0.5*sin(3 r)} ,0) -- (3, {0.5*sin(3 r) + 1} ,1)--(3, {0.5*sin(3 r) + 2} ,1) --(3, {0.5*sin(3 r) + 3} ,0);
\draw (7.8, {0.5*sin(7.8 r) + 1.5} ,0) node[blue]{$\mathbf{\tau}$} ; 
\draw[->](7.8,4,0) --(7.8,4,1) ;
\draw[->](7.8,4,0) --(7.8,5,0)node[right]{$y$} ; 
\draw[->](7.8,4,0) --(8.8, 4, 0)node[left]{$x$} ; 

\end{tikzpicture}
\end{minipage}
    \begin{minipage}{0.48\textwidth}
        \centering

    \begin{tikzpicture}[scale=0.9]
            \draw[red] plot[domain=0:2*pi, samples=50] ( {0.5*sin(\x r)+3}, \x) ; 
                        \draw[blue] plot[domain=0:2*pi, samples=50] ( {0.5*sin(\x r)+2}, \x) ; 
            \draw[blue] plot[domain=0:2*pi, samples=50] ( {0.5*sin(\x r)+1}, \x) ; 

            \draw[red] plot[domain=0:2*pi, samples=50] ( {0.5*sin(\x r)}, \x) ; 

            \fill[ForestGreen, opacity=0.2]
    plot[domain=0:2*pi, samples=50]
      ( {0.5*sin(\x r)+1}, \x)
    --
    plot[domain=2*pi:0, samples=50]
      ( {0.5*sin(\x r)}, \x)
    -- cycle;
               \fill[ForestGreen, opacity=0.2]
    plot[domain=0:2*pi, samples=50]
      ( {0.5*sin(\x r)+1}, \x)
    --
    plot[domain=2*pi:0, samples=50]
      ( {0.5*sin(\x r)+0.2}, \x)
    -- cycle;
                \fill[ForestGreen, opacity=0.2]
    plot[domain=0:2*pi, samples=50]
      ( {0.5*sin(\x r)+1}, \x)
    --
    plot[domain=2*pi:0, samples=50]
      ( {0.5*sin(\x r)+0.4}, \x)
    -- cycle;
                \fill[ForestGreen, opacity=0.2]
    plot[domain=0:2*pi, samples=50]
      ( {0.5*sin(\x r)+1}, \x)
    --
    plot[domain=2*pi:0, samples=50]
      ( {0.5*sin(\x r)+0.6}, \x)
    -- cycle;
                \fill[ForestGreen, opacity=0.2]
    plot[domain=0:2*pi, samples=50]
      ( {0.5*sin(\x r)+1}, \x)
    --
    plot[domain=2*pi:0, samples=50]
      ( {0.5*sin(\x r)+0.8}, \x)
    -- cycle;

          \fill[ForestGreen, opacity=0.2]
    plot[domain=0:2*pi, samples=50]
      ( {0.5*sin(\x r)+2}, \x)
    --
    plot[domain=2*pi:0, samples=50]
      ( {0.5*sin(\x r)+3}, \x)
    -- cycle;
              \fill[ForestGreen, opacity=0.2]
    plot[domain=0:2*pi, samples=50]
      ( {0.5*sin(\x r)+2}, \x)
    --
    plot[domain=2*pi:0, samples=50]
      ( {0.5*sin(\x r)+2.8}, \x)
    -- cycle;
              \fill[ForestGreen, opacity=0.2]
    plot[domain=0:2*pi, samples=50]
      ( {0.5*sin(\x r)+2}, \x)
    --
    plot[domain=2*pi:0, samples=50]
      ( {0.5*sin(\x r)+2.6}, \x)
    -- cycle;
              \fill[ForestGreen, opacity=0.2]
    plot[domain=0:2*pi, samples=50]
      ( {0.5*sin(\x r)+2}, \x)
    --
    plot[domain=2*pi:0, samples=50]
      ( {0.5*sin(\x r)+2.4}, \x)
    -- cycle;
              \fill[ForestGreen, opacity=0.2]
    plot[domain=0:2*pi, samples=50]
      ( {0.5*sin(\x r)+2}, \x)
    --
    plot[domain=2*pi:0, samples=50]
      ( {0.5*sin(\x r)+2.2}, \x)
    -- cycle;

                \fill[ForestGreen]
    plot[domain=0:2*pi, samples=50]
      ( {0.5*sin(\x r)+1}, \x)
    --
    plot[domain=2*pi:0, samples=50]
      ( {0.5*sin(\x r)+2}, \x)
    -- cycle;
    \draw  ( {0.5*sin(5 r)+1.5}, 5) node{$\eta = 1$} ; 
    \draw[purple , thick]  ( {0.5*sin(2.2 r)-0.2}, 2.2)--( {0.5*sin(2.2 r)+3.2}, 2.2) node[right]{$\Gamma_{0, \delta}$}; 
    \draw [->](4,4)-- (4, 3)node[left]{$x$}; 
    \draw[->](4,4)--(5, 4)node[right]{$y$} ; 
    \draw(4,4) node{$\bigotimes$} ;

    \draw (4, 0.4)node[below]{$0$}--(6, 0.4) node[below]{$1$} ; 
    \fill[ForestGreen, opacity = 0.2] (4, 0.4) rectangle (6, 0.9) ; 
    \fill[ForestGreen, opacity = 0.2] (4.4, 0.4) rectangle (6, 0.9) ; 
    \fill[ForestGreen, opacity = 0.2] (4.8, 0.4) rectangle (6, 0.9) ; 
    \fill[ForestGreen, opacity = 0.2] (5.2, 0.4) rectangle (6, 0.9) ; 
    \fill[ForestGreen, opacity = 0.2] (5.6, 0.4) rectangle (6, 0.9) ; 

        \end{tikzpicture}
    \end{minipage}
   \caption{Construction of \textcolor{ForestGreen}{$\eta$} using $\eta_0$}
    \label{fig:construction_eta}
\end{figure}
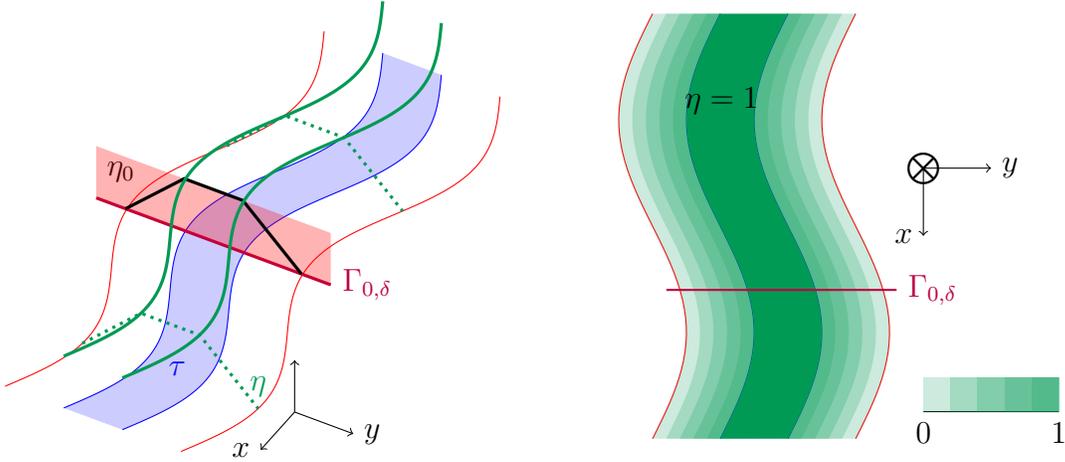

We would like to prove that $\eta$ is of class $C^1$ and constant along $\bb$.
Let $x_0\in H$, we take $F:x\mapsto (x-x_0)\cdot \mathbf{n}_H$, we have that $\partial_t F(\mathbf{\phi}(x, t)) = \partial_t \mathbf{\phi}(x, t) \cdot (\nabla F) (\mathbf{\phi}(x,t)) = \bb(\mathbf{\phi}(x, t))\cdot \mathbf{n}_H$. In particular, as $\bb(\mathbf{\phi}(x_0, 0))\cdot \mathbf{n}_H\neq 0$, using the implicit function theorem in $(x, t)=(x_0, 0)$ we get that there exists a ball $B_0$ centered in $x_0$ and $T\in C^1(B_0, \mathbb{R})$ such that 
\[ \mathbf{\phi}(x, t)\in H \Leftrightarrow F(\mathbf{\phi}(x, t))=0 \Leftrightarrow t=T(x).\] Then, for all $x\in B_0$ one has \[ \eta(x) = \eta(\mathbf{\phi}(x, 0)) = \eta(\mathbf{\phi}(x, T(x))) =\eta_0(\mathbf{\phi}(x, T(x))) . \]
Because $\eta_0, \mathbf{\phi}$ and $\tau$ are $C^1$ functions, we get that $\eta$ is $C^1$ on a neighborhood of $H$.
We now need to extend this locality property on $\Omega$. To do so we consider the flow $\Phi$ of the vector field, defined as the maximal solution of the ODE \[ \frac{d}{dt}\Phi^t(x) = \bb(\Phi^t(x)),   \; \Phi^0(x) = x , \;\; \forall x\in \Omega , \forall t\in \mathcal{I}(x). \]
For any $x\in\Omega$ there exists $t_x>0$ such that the function $t\mapsto \Phi^t(x)$ is $C^1$ on an interval $\mathcal{V}(x) = (-t_x, t_x)$, in particular for any compact subset $K_0\subset \Omega$, by taking \[T_0= \underset{x\in K_0}{\inf}( t_x),\] the mapping $(t,x)\mapsto \Phi^t(x)$ is well defined and is $C^1$ on $[-T_0, T_0]\times K_0$. 
 
 \noindent Let us show that $\Phi^t(x)$ is a local $C^1$ diffeomorphism. Let $x\in\Omega$ , $t,s\in \mathcal{V}(x)$ such that $t+s\in \mathcal{V}(x)$, on one side we have 
\[ \frac{d}{dt}(\Phi^{t+s}(x)) = \bb(\Phi^{t+s}(x)) ,\, \, \Phi^{t+s}(x)|_{t=0} = \Phi^{s}(x).\]
And on the other hand
\[ \frac{d}{dt} ( \Phi^{t}(\Phi^s(x))) = \bb ( \Phi^{t}(\Phi^s(x))), \, \,\Phi^{t}(\Phi^s(x))|_{t=0} = \Phi^s(x).  \]
From the unicity of Cauchy-Lipschitz it comes that 
\[ \Phi^{t+s}(x) = \Phi^t(\Phi^s(x)).\]
Finally, because $\Phi^0$ is the identity we get that $\Phi^t$ is a local $C^1$-diffeomorphism of inverse $\Phi^{-t}$. 

Taking some $x_* \in\Omega$, since we assumed that $H$ is crossed by every field line exactly once, the Cauchy--Lipschitz theorem ensures the uniqueness of a pair $(x_0, t_0)\in H\times \mathbb{R}$ such that $x_*=\mathbf{\phi}(x_0, t_0)$.
We get from the above that there exists a neighborhood $U_0$ of $x_0$ such that $\Phi^{t_0}:x\mapsto \mathbf{\phi}(x, t_0)$ is a $C^1$-diffeomorphism from $U_0$ to $U_*=\Phi^{t_0}(U_0)$, neighborhood of $x_*$. Then, $\eta(x) = \eta(\Phi^{-t_0}(x)) \;  \forall x\in U_*$, however, because $\eta$ is $C^1$ on a neighborhood $U_0$ of $x_0$, we deduce that $\eta$ is also $C^1$ on $U^\star$ neighborhood of $x^*$, as illustrated In Figure \ref{fig:C1diffeo}. 

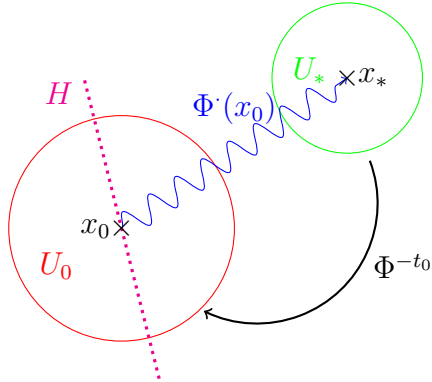
\begin{figure}
    \centering
    \begin{tikzpicture}
        \draw[red](0,0) circle (1.5) ;
        \draw[green](3,2) circle(1); 
        \draw[decorate,->,blue, decoration={snake, amplitude=5pt, segment length=12pt}] (0,0) -- (3,2);
        \draw(1.5 , 1.6) node[color=blue]{$\Phi^{\cdot}(x_0)$} ; 
        \draw[dotted, very thick, magenta] (0.5,-2)--(-0.5, 2); 
        \draw(0,0) node{$\times$} ; 
        \draw(0,0) node[left]{$x_0$} ; 
        \draw(3,2) node{$\times$} ; 
        \draw(3,2) node[right]{$x_*$} ; 

        \draw[->, thick] (3.3,0.9) arc[start angle=20.7, end angle=-116.3, radius=1.6cm];
        \draw (3.2,-0.5) node[right]{$\Phi^{-t_0}$} ; 
        \draw (-0.5 , -0.5) node[red, left]{$U_0$} ; 
        \draw(2.5, 2.1) node[green]{$U_*$} ;
        \draw ( -0.5, 1.8) node[magenta, left] {$H$} ; 
        
    \end{tikzpicture}
    \caption{$C^1$-diffeomorphism between $U_0$ and $U_*$}
    \label{fig:C1diffeo}
\end{figure}
Hence, we have proven that $\eta$ is $C^1$ on $\tau_\delta$.
Moreover, $\eta$ is constant along the field lines, indeed $\forall x_0 \in \Gamma_{0, \delta} , \forall s\in \mathcal{I}(x_0)$ we have $\frac{\partial}{\partial s}\eta(\mathbf{\phi}(x_0, s))= 0$.
Hence, we have
\[0 = \frac{\partial}{\partial s} \eta (\mathbf{\phi}(x_0, s)) = \frac{\partial \mathbf{\phi}}{\partial s }(x_0, s)\cdot \nabla \eta(\mathbf{\phi}(x_0, s)) =\bb(\mathbf{\phi}(x_0, s))\cdot  \nabla \eta(\mathbf{\phi}(x_0, s)),\] 
which leads to \[\forall x \in \tau_\delta, \ \bb(x)\cdot \nabla \eta(x) = 0. \]
A downside of our analysis is that there might be several suitable $H$ for the definition of the tube cluster and, as illustrated In Figure \ref{fig:influenceorientation}, this leads to different $\tau_\delta$.

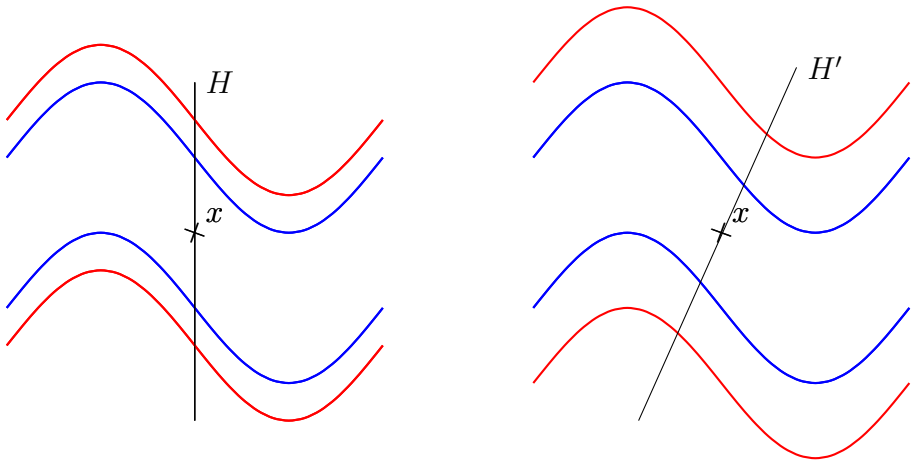
\begin{figure}
    \centering
    \begin{tikzpicture}
    \draw[domain=0:5,smooth,variable=\x,blue, thick] 
        plot ({\x}, {0.5 + sin(deg(2*pi*\x/5))});

    \draw[domain=0:5,smooth,variable=\x,blue, thick] 
        plot ({\x}, {2.5 + sin(deg(2*pi*\x/5))});

    \draw (2.5, -1)--(2.5, 3.5) ; 

    \node[rotate=25] at (2.5,1.5) {$\times$};
     \draw (2.5, 1.5) node[above right]{$x$} ; 

      \draw[domain=0:5,smooth,variable=\x,red, thick] 
        plot ({\x}, {0 + sin(deg(2*pi*\x/5))});

    \draw[domain=0:5,smooth,variable=\x,red, thick] 
        plot ({\x}, {3 + sin(deg(2*pi*\x/5))});

     \draw[domain=0:5,smooth,variable=\x,blue, thick] 
        plot ({\x}, {0.5 + sin(deg(2*pi*\x/5))});

    \draw[domain=0:5,smooth,variable=\x,blue, thick] 
        plot ({\x}, {2.5 + sin(deg(2*pi*\x/5))});

    \draw (2.5, -1)--(2.5, 3.5)node[right]{$H$} ; 

    \node[rotate=25] at (2.5,1.5) {$\times$};
     \draw (2.5, 1.5) node[above right]{$x$} ; 

      \draw[domain=0:5,smooth,variable=\x,red, thick] 
        plot ({\x}, {0 + sin(deg(2*pi*\x/5))});

    \draw[domain=0:5,smooth,variable=\x,red, thick] 
        plot ({\x}, {3 + sin(deg(2*pi*\x/5))});

           \draw[domain=0+7:5+7,smooth,variable=\x,blue, thick] 
        plot ({\x}, {0.5 + sin(deg(2*pi*(\x-7)/5))});

    \draw[domain=0+7:5+7,smooth,variable=\x,blue, thick] 
        plot ({\x}, {2.5 + sin(deg(2*pi*(\x-7)/5))});

    \draw (2.5, -1)--(2.5, 3.5) ; 

    \node[rotate=25] at (2.5+7,1.5) {$\times$};
     \draw (2.5+7, 1.5) node[above right]{$x$} ; 

      \draw[domain=0+7:5+7,smooth,variable=\x,red, thick] 
        plot ({\x}, {-0.5 + sin(deg(2*pi*(\x-7)/5))});

    \draw[domain=0+7:5+7,smooth,variable=\x,red, thick] 
        plot ({\x}, {3.5 + sin(deg(2*pi*(\x-7)/5))});

     \draw[domain=0+7:5+7,smooth,variable=\x,blue, thick] 
        plot ({\x}, {0.5 + sin(deg(2*pi*(\x-7)/5))});

    \draw[domain=0+7:5+7,smooth,variable=\x,blue, thick] 
        plot ({\x}, {2.5 + sin(deg(2*pi*(\x-7)/5))});

    \draw (2.4+6, -1)--(2.5+8, 3.7)node[right]{$H'$} ; 

    \node[rotate=25] at (2.5+7,1.5) {$\times$};
     \draw (2.5+7, 1.5) node[above right]{$x$} ; 
    \end{tikzpicture}
    \caption{Influence of $H$ on the definition of $\tau_\delta$.}
    \label{fig:influenceorientation}
\end{figure}
This is not an issue at all however it does change the distance between the borders of $\tau$ and $\tau_\delta$, and therefore the coefficient in the Caccioppoli inequality. One way to bypass this is to take $\tau_\delta$ the tube cluster such that  \[\inf_{\tau_\delta\supsetneq \tau} \text{dist}(\partial \tau, \partial\tau_\delta ) = \delta.\]
In this framework, the minimal distance between \( \tau \) and \( \partial\tau_\delta \) is \( \delta \). Consequently, for any hyperplane \( H \) defining \( \tau_\delta \), the cutoff function \( \eta_0 \) constructed on \( \Gamma_{0, \delta} \subset H \) satisfies, for some constant \( C \geq 1 \),
\[
\|\nabla \eta_0\|_{\mathrm{L}^\infty(\Gamma_{0, \delta})} \leq \frac{C}{\delta}.
\]
Assuming $C=1$ directly implies
\[
\|\nabla \eta\|_{\mathrm{L}^\infty(\Omega)} \leq \frac{1}{\delta}.
\]From there, for any tube cluster $\tau$ and $\delta>0$ there exists $\eta$ defined on $\tau_\delta$ such that

\[\begin{aligned}
    \Vert \nabla (\eta u) \Vert_{\mathrm{L}^2(\tau)}^2 &= \langle \nabla u , \nabla (\eta^2u)\rangle_{\mathrm{L}^2(\tau_\delta)}-\langle \nabla u , \eta u  \nabla \eta \rangle_{\mathrm{L}^2(\tau_\delta)} + \langle u\nabla \eta , (\nabla \eta u) \rangle_{\mathrm{L}^2(\tau_\delta)} \\
    &= \langle \nabla u , \nabla (\eta^2 u )\rangle_{\mathrm{L}^2(\tau_\delta)} + \langle u\nabla \eta , u\nabla \eta\rangle_{\mathrm{L}^2(\tau_\delta)} \\
    &= \varepsilon^{-1}a(u , \eta^2  u) - \varepsilon^{-1}\int_{\tau_\delta} (\bb\cdot (\nabla u) \eta^2 u +\beta \eta^2 u^2) d\boldsymbol{x} + \Vert u\nabla \eta \Vert_{\mathrm{L}^2(\tau_\delta)}^2 \\
     &= - \varepsilon^{-1}\left(\int_{\partial\tau_\delta}\frac{\bb}{2}\eta^2u^2\cdot \mathbf{n}_{\tau_\delta} d\boldsymbol{s}  -\int_{\tau_\delta}(\text{div}(\frac{\bb}{2})-\beta) \eta^2u^2 d\boldsymbol{x}- \int_{\tau_\delta} (\bb\cdot \nabla \eta)\eta u^ 2d\boldsymbol{x}\right) \\ 
     & \quad+\Vert u\nabla \eta \Vert _{\mathrm{L}^2(\tau_\delta)}.
\end{aligned}\]

Hence, with our hypothesis $u\in \mathrm{H}^1 _0(\Omega) $, $\text{div}(\bb/2)-\beta<0$, $C=1$ and because $b\cdot \nabla \eta =0$ we get
\begin{equation*}
    \begin{aligned}
        \Vert\nabla u \Vert_{\mathrm{L}^2(\tau)}^2 &\leq \varepsilon^{-1} \int_{\tau_\delta}\eta u ^2 \bb \cdot \nabla \eta d\boldsymbol{x} +\Vert u\nabla \eta \Vert_{\mathrm{L}^2(\tau_\delta)}^2 \\
    &\leq \frac{1}{\delta^2}\Vert u \Vert_{\mathrm{L}^2(\tau_\delta)}^2
\end{aligned}.
\end{equation*}

\begin{rem}
Another possible approach would have been to consider the Riemannian manifold $M$ orthogonal to the vector field $\bb$ (i.e., for all $x \in M$, $\vert \bb(x) \cdot \boldsymbol{n}_{M}(x)\vert = \vert \bb(x)\vert $), and to define a function $\eta_0$ on $M$. By transporting $M$ along the flow of $\bb$, one can cover the entire space and thereby extend $\eta_0$, yielding a function $\eta$ such that $\bb \cdot \nabla \eta = 0$.
However, this approach involves several difficulties, notably due to the fact that the flow speed is not necessarily constant. In particular, if the vector field $\bb$ does not derive from a potential, there is no clear meaning to an elementary displacement of the manifold along the field, which significantly complicates the analysis.
\end{rem}

Thus, we have succeeded in establishing a Caccioppoli-type inequality in the case of tubes clusters, and we would therefore like to apply the same proof scheme as in the constant vector field case. However, unlike the constant convection case where the cluster tubes were convex domains, we do not have an explicit Poincaré constant for non convex domains. In particular, it is not known whether this constant depends on the diameter of the domain.
Fortunately, under our assumptions on the vector field, we can derive conditions ensuring that the diffeomorphism which locally straightens the field is in fact a global diffeomorphism. In this way, we can establish a nontrivial bound involving the diameter in the Poincaré constant of a non-convex domain. However, it is important to emphasize that this constant does not apply to arbitrary non-convex domains, but only to the particular case of domains that are cluster tubes for a sufficiently regular vector field~$\bb$.

Let us prove in dimension $d=2,3$ that there exists $C^1$-diffeomorphism that straightens the field lines of $\bb$. We consider the hyperplane $H$  equipped with an orthonormal basis ${\mathcal B}_H := (e_1,..., e_{d-1})$. Let us fix $p \in \Gamma_{0, \delta}$ and let $f$ be the affine transformation defined from $\mathbb{R}^{d-1}$ to $H$ by: for all $y\in\mathbb{R}^{d-1}$,
\begin{eqnarray*}
f(y):=p+\displaystyle\sum_{i=1}^{d-1} y_i e_i.
\end{eqnarray*}
Since ${\mathcal B}_H$ is a basis of $H$, then we deduce that $f$ is an affine isomorphism. We thus set $\widetilde{\Gamma}_{0,\delta} := f^{-1}({\Gamma}_{0,\delta})$.
Then, we introduce $\widetilde{\tau}_\delta$ the set of $\mathbb{R}^d$ defined by $\widetilde{\tau}_\delta:=\{(y,s): ~~y\in \widetilde{\Gamma}_{0,\delta}, s\in \mathcal{I}(x) \hbox{ with } x=f(y)\}$.
We also introduce the function $\widetilde{\phi}$ defined from $\widetilde{\tau}_{\delta}$ to $\tau_\delta$ by: for all $(y,s) \in \widetilde{\tau}_{\delta}$ 
\[ \widetilde{\phi}(y, s) := \phi\left(f(y), s\right).\]
By this way for any $y\in \widetilde{\Gamma}_{0,\delta}$, $\displaystyle \frac{\partial}{\partial s} \widetilde{\phi}(y, s) = {\bb}(\widetilde{\phi}(y, s))$. We consider $J$ the determinant of the Jacobian of $\widetilde{\phi}$ defined by $J :=\mathrm{det}(\nabla \widetilde{\phi})$. The use of differentiation rules for the determinant leads to \[ \frac{\partial J}{\partial s}  = \mathrm{div}({\bb}(\widetilde{\phi}(\cdot, s))J,\]
so that \[ J(\cdot, s) = J(\cdot, 0) e^{\int_0^s \mathrm{div}({\bb}(\widetilde{\phi}(\cdot, \sigma)))d\boldsymbol{\sigma} }.\]

Since for any $y\in \widetilde{\Gamma}_{0,\delta}$, we get $\widetilde{\phi}(y, 0) = \phi(f(y),0)=f(y)$, then we deduce that for any $y\in \widetilde{\Gamma}_{0,\delta}$  \begin{eqnarray*}
J(y, 0) &=& \mathrm{det}(e_1,\ldots,e_{d-1}, {\bb}(\widetilde{\phi}(y, 0))) \\
&=& \pm {\bf n}_H \cdot {\bb}(f(y)).
\end{eqnarray*}

For any $y_0\in \widetilde{\Gamma}_{0,\delta}$, we have $f(y_0)\in  {\Gamma}_{0,\delta}$ and then ${\bf n}_H \cdot {\bb}(f(y_0)) \neq 0$ which implies that $J(y_0,0)\neq 0$. Hence we deduce that $J(y_0, s) \neq 0 $, $\forall (y_0,s) \in \widetilde{\tau}_\delta$. Moreover, we recall that the hyperplane $H$  intersects each field line exactly once, and consequently so does $\Gamma_{0, \delta}$ since $\Gamma_{0,\delta}\subset H$.
This latter combined with the Cauchy Lipschitz Theorem implies that $\widetilde{\phi}: \widetilde{\tau}_\delta\mapsto \tau_\delta$ is injective. This way, using the global inversion theorem (\cite{Lee00}[Corollary~C.36]) we get that $\widetilde{\phi}$ is a $C^1$-diffeomorphism and we can conclude that $\phi$ is also a $C^1$-diffeomophism.

In this setting, let us show that it is possible to establish a Poincaré Wirtinger inequality involving the diameter of the domain.
\begin{lem}
Let \( \omega \subset \Omega \) be a tube cluster, and let \( \mathcal{Z} \) be a closed subspace of \( \mathrm{L}^{2}(\omega) \). Assume that there exists a diffeomorphism \( \Phi \) mapping the extension \( \omega \) onto $\hat{\omega}$ a convex domain in \( \mathbb{R}^d \). Then, for any integer \( \ell \geq 1 \), there exists a subspace \( \mathcal{V} \subset \mathcal{Z} \) with $\dim(\mathcal{V}) \leq \ell^d $
such that
\[
\inf_{v \in \mathcal{V}} \|u - v\|_{\mathrm{L}^{2}(\omega)} \leq C_\Phi \frac{\sqrt{d}}{\pi}\frac{\mathrm{diam}(\omega)}{\ell}  \|\nabla u\|_{\mathrm{L}^{2}(\omega)} \quad \forall u \in \mathcal{Z} \cap \mathrm{H}^{1}(\omega),
\]
where the constant \( C_\Phi > 0 \) depends only on the regularity and geometry of the diffeomorphism \( \Phi \).
\end{lem}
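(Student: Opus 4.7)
The plan is to reduce to the convex case already established in Lemma~\ref{PoincareApprox} by transporting everything through the diffeomorphism $\Phi:\omega\to\hat{\omega}$. The three ingredients will be: a change of variables to push $u$ to $\hat{\omega}$, an application of the convex Poincaré--Wirtinger approximation on $\hat{\omega}$, and a final $\mathrm{L}^2$-orthogonal projection onto $\mathcal{Z}$.

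First, for $u\in\mathcal{Z}\cap\mathrm{H}^1(\omega)$, define $\hat{u}:=u\circ \Phi^{-1}\in \mathrm{H}^1(\hat{\omega})$. By the standard change of variables formula,
\[
\|\hat{u}\|_{\mathrm{L}^{2}(\hat{\omega})} \leq \|\det D\Phi\|_{\mathrm{L}^\infty(\omega)}^{1/2}\,\|u\|_{\mathrm{L}^{2}(\omega)}
\]
and, using $\nabla \hat{u}(y) = (D\Phi^{-1}(y))^{\mathrm{T}}\nabla u(\Phi^{-1}(y))$,
\[
\|\nabla \hat{u}\|_{\mathrm{L}^{2}(\hat{\omega})} \leq \|D\Phi^{-1}\|_{\mathrm{L}^\infty}\,\|\det D\Phi\|_{\mathrm{L}^\infty}^{1/2}\,\|\nabla u\|_{\mathrm{L}^{2}(\omega)}.
\]
Analogous inequalities hold in the reverse direction with $\Phi$ and $\Phi^{-1}$ interchanged. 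All such factors will be absorbed into the single constant $C_\Phi$ at the end.

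Next, I would apply Lemma~\ref{PoincareApprox} on the convex set $\hat{\omega}$ with the closed subspace $\widehat{\mathcal{Z}}:=\mathrm{L}^{2}(\hat{\omega})$ (no orthogonality constraint needed here) to obtain a subspace $\widehat{\mathcal{W}}\subset \mathrm{L}^{2}(\hat{\omega})$ with $\dim(\widehat{\mathcal{W}})\leq \ell^d$ and some $\hat{v}\in\widehat{\mathcal{W}}$ satisfying
\[
\|\hat{u}-\hat{v}\|_{\mathrm{L}^{2}(\hat{\omega})}\leq \frac{\sqrt{d}}{\pi}\frac{\mathrm{diam}(\hat{\omega})}{\ell}\,\|\nabla \hat{u}\|_{\mathrm{L}^{2}(\hat{\omega})}.
\]
Pulling back, the space $\mathcal{W}:=\{\hat{w}\circ\Phi : \hat{w}\in \widehat{\mathcal{W}}\}\subset \mathrm{L}^{2}(\omega)$ has dimension at most $\ell^d$, and the element $w:=\hat{v}\circ\Phi\in\mathcal{W}$ satisfies
\[
\|u-w\|_{\mathrm{L}^{2}(\omega)}\leq \|\det D\Phi^{-1}\|_{\mathrm{L}^\infty(\hat{\omega})}^{1/2}\,\|\hat{u}-\hat{v}\|_{\mathrm{L}^{2}(\hat{\omega})}.
\]
Combining these estimates with the change-of-variables bound on $\|\nabla \hat{u}\|_{\mathrm{L}^{2}(\hat{\omega})}$ and controlling $\mathrm{diam}(\hat{\omega})$ by $\|D\Phi\|_{\mathrm{L}^\infty(\omega)}\,\mathrm{diam}(\omega)$ produces an estimate of the required form, with the geometric factor
\[
C_\Phi := \|D\Phi\|_{\mathrm{L}^\infty}\,\|D\Phi^{-1}\|_{\mathrm{L}^\infty}\,\|\det D\Phi\|_{\mathrm{L}^\infty}^{1/2}\,\|\det D\Phi^{-1}\|_{\mathrm{L}^\infty}^{1/2}
\]
depending only on $\Phi$ and the geometry of $\omega$, $\hat{\omega}$.

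Finally, to land inside $\mathcal{Z}$, introduce the $\mathrm{L}^{2}(\omega)$-orthogonal projection $\mathrm{P}:\mathrm{L}^{2}(\omega)\to\mathcal{Z}$ and set $\mathcal{V}:=\mathrm{P}(\mathcal{W})$, so that $\dim(\mathcal{V})\leq \dim(\mathcal{W})\leq \ell^{d}$. Since $u\in\mathcal{Z}$ gives $\mathrm{P}(u)=u$, one has $\|u-\mathrm{P}(w)\|_{\mathrm{L}^{2}(\omega)}=\|\mathrm{P}(u-w)\|_{\mathrm{L}^{2}(\omega)}\leq \|u-w\|_{\mathrm{L}^{2}(\omega)}$, and the claimed inequality follows. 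The main obstacle I anticipate is purely bookkeeping: correctly tracking the Jacobian and $D\Phi$ factors so that the final constant is indeed independent of $\ell$ and linear in $\mathrm{diam}(\omega)$ (rather than $\mathrm{diam}(\hat{\omega})$); no new analytical difficulty arises beyond the convex case already treated.
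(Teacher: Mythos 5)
Your proof is correct and follows essentially the same route as the paper: transport to the convex image $\hat{\omega}$ via $\Phi$, apply the convex Poincaré--Wirtinger approximation there, pull back, and then project onto $\mathcal{Z}$. The only difference is presentational --- you invoke Lemma~\ref{PoincareApprox} as a black box on $\hat{\omega}$ (with trivial subspace $\mathrm{L}^2(\hat{\omega})$) and pull back the whole estimate once, whereas the paper unpacks the cube decomposition and applies the local change of variables on each subdomain $\omega_i = \Phi^{-1}(\hat{\omega}_i)$ before summing; the resulting piecewise-constant approximation space and the constant $C_\Phi$ are the same in both.
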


\begin{proof}
Let $\omega\subset\Omega$ be a tube cluster, it is a convex domain. We consider $\hat{\omega}:= \Phi(\omega)$, it is a parallelepiped and in particular it is convex. 
Let \( \ell \in \mathbb{N} \). We perform a regular Cartesian partition of the reference domain \( \hat{\omega} \) into \( \ell^d \) subdomains \( (\hat{\omega}_i)_{1 \leq i \leq \ell^d} \) by dividing each coordinate direction into \( \ell \) intervals of equal length. As illustrated In Figure \ref{fig:partition_bis}, this induces a corresponding partition of the physical domain \( \omega \) into subdomains \( (\omega_i)_{1 \leq i \leq \ell^d} \), defined by
\[
\omega_i := \Phi^{-1}(\hat{\omega}_i), \quad \forall i \in \{1, \dots, \ell^d\}.
\]

Let $u\in \mathrm{H}^1(\omega)$, we consider $\hat{u}:= u\circ \Phi^{-1}$ and we have $\hat{u}\in\mathrm{H}^1(\hat{\Omega})$. For any $\lambda \in\mathbb{R}$ and $i\in[1, \ell^d]$, denoting $D\psi$ the jacobian matrix of a diffeomorphism $\psi$, one has 
\[ \Vert u-\lambda \Vert_{\mathrm{L}^2(\omega_i)}^2 = \int_{\hat{\omega_i}} |\hat u(y)-\lambda|^2 |\mathrm{det}(D\Phi^{-1}(y))| d\boldsymbol{y}. \]
However, $\hat{\omega_i}$ is a convex domain, which means that we can use the Poincaré Wirtinger inequality and we have 
\[ \Vert \hat{u}- \hat{u}_i \Vert_{\mathrm{L}^2(\hat{\omega_i})}^2 \leq \frac{\mathrm{diam}(\hat{\omega}_i)^2}{\pi^2} \Vert \nabla \hat{u} \Vert_{\mathrm{L}^2(\hat{\omega_i})}, \]
where $\hat{u}_i:= \frac{1}{|\hat{\omega}_i|}\int_{\hat{\omega}_i} \hat{u}(y) d\boldsymbol{y}$ is the mean value of $\hat{u}$ on $\hat{\omega}_i$.
We have 

\[\begin{aligned}
    \Vert \nabla \hat{u} \Vert_{\mathrm{L}^2(\hat{\omega}_i)}^2 &= \int_{\hat{\omega}_i}\Vert D\Phi^{-1}(y) \nabla u(\Phi^{-1}(y)) \Vert ^2 d\boldsymbol{y}\leq\int_{\hat{\omega}_i} \Vert D\Phi^{-1}(y) \Vert^2 \Vert \nabla u(\Phi^{-1}(y))  \Vert ^2 d\boldsymbol{y}\\
    &\leq \int_{\omega_i} \Vert D\Phi^{-1}(\Phi(x))\Vert^2 \Vert \nabla u(x)\Vert ^2 |\mathrm{det}(D\Phi(x))|d\boldsymbol{x} \\
    &\leq \underset{y\in\hat{\omega}_i}{\mathrm{sup}}(\Vert D\Phi^{-1}(y)\Vert^2) \underset{x\in\omega_i}{\mathrm{sup}}(|\mathrm{det}(D\Phi(x))|) \Vert \nabla u \Vert^2_{\mathrm{L}^2(\omega_i)}.
\end{aligned}\]

Where $\forall y\in\hat{\omega}$ we take $\Vert D\Phi^{-1}(y)\Vert := \underset{v\in\mathbb{R}^d , v\neq 0}{\mathrm{sup}}(\frac{\Vert D\Phi^{-1}(y) v\Vert}{\Vert v\Vert})$.
This way we get
\[ \Vert u -\hat{u}_i\Vert_{\mathrm{L}^2(\omega_i)}^2 \leq C_{\Phi, i}^2 \frac{\mathrm{diam}(\hat{\omega}_i)^2}{\pi^2}\Vert \nabla u \Vert^2_{\mathrm{L}^2(\omega_i)}, \]
with $C_{\Phi, i} = \underset{y\in\hat{\omega}_i}{\mathrm{sup}}(\Vert D\Phi^{-1}(y)\Vert) \underset{x\in\omega_i}{\mathrm{sup}}(|\mathrm{det}(D\Phi(x))|^{1/2})\underset{y\in\hat{\omega_i}}{\mathrm{sup}}(|\mathrm{det}(D\Phi^{-1}(y))|^{1/2}).$
By construction we have $\mathrm{diam}(\hat{\omega_i}) \leq \frac{\sqrt{d}}{\ell}\mathrm{diam}({\hat{\omega}})$ and $\mathrm{diam}(\hat{\omega}) \leq \underset{x\in\omega}{\mathrm{sup}}(\Vert D\Phi(x)\Vert)\mathrm{diam}(\omega)$. From there the proof is identical to Lemma \ref{PoincareApprox}, taking $k= \ell^d$ we define $\mathcal{W}_k=\{ v\in\mathrm{L}^2(\omega) | \;v|_{\omega_i}\in \mathbb{R}\}$ and take $\Pi(u)\in\mathcal{W}_k$ where $\Pi(u)|_{\omega_i} = \hat{u}_i$ we have 
\begin{equation*}
    \begin{aligned}
        \Vert u-\Pi(u)\Vert^2 _{\mathrm{L}^2(\omega)}&= \sum_{i=1}^k \Vert u-u_i \Vert^2_{\mathrm{L}^2(\omega_i)}
        \leq \frac{d}{\pi^2 } C_{\Phi}^2\sum_{i=1}^k \frac{\mathrm{diam}(\omega)^2}{\ell^2}\Vert \nabla u\Vert^2_{\mathrm{L}^2(\omega_i)}\\
        &\leq \frac{d}{\pi^2 }C_{\Phi}^2\frac{\mathrm{diam}(\omega)^2}{\ell^2}\Vert \nabla u \Vert^2_{\mathrm{L}^2(\omega)}, 
    \end{aligned}
\end{equation*} 
where $C_\Phi =\underset{y\in\hat{\omega}}{\mathrm{sup}}(\Vert D\Phi^{-1}(y)\Vert)\underset{x\in\omega}{\mathrm{sup}}(\Vert D\Phi(x)\Vert) \underset{x\in\omega}{\mathrm{sup}}(|\mathrm{det}(D\Phi(x))|^{1/2})\underset{y\in\hat{\omega}}{\mathrm{sup}}(|\mathrm{det}(D\Phi^{-1}(y))|^{1/2}).$

Finally, consider the $\mathrm{L}^{2}(\omega)-$orthogonal projection $\mathrm{P}:\mathrm{L}^{2}(\omega)\to \mathcal{Z}$
and set $\mathcal{V}:=\mathrm{P}(\mathcal{W}_k)$ so that $\mathrm{dim}(\mathcal{V})\leq \mathrm{dim}(\mathcal{W}_k)$.
Then for any $u\in \mathcal{Z}\cap \mathrm{H}^{1}(\omega)$ we have
$\Vert u-\mathrm{P}\cdot\Pi(u)\Vert_{\mathrm{L}^{2}(\omega)} = \Vert \mathrm{P}(u-\Pi(u))\Vert_{\mathrm{L}^{2}(\omega)}\leq
\Vert u-\Pi(u)\Vert_{\mathrm{L}^{2}(\omega)}\leq C_\Phi(\sqrt{d}/\pi)(\mathrm{diam}(\omega)/\ell)
\Vert \nabla u\Vert_{\mathrm{L}^{2}(\omega)}$. \hfill 

\end{proof}

\begin{figure}
    \centering
    \begin{tikzpicture}
            \draw[domain=0:5,smooth,variable=\x,blue, thick] 
        plot ({\x}, {0.5 + sin(deg(2*pi*\x/5))});

    \draw[domain=0:5,smooth,variable=\x,blue, thick] 
        plot ({\x}, {2.5 + sin(deg(2*pi*\x/5))});
    \draw[blue, thick] (5, {2.5 + sin(deg(2*pi))}) -- (5, {0.5 + sin(deg(2*pi))});
    \draw [blue, thick](0, {2.5 )}) -- (0, {0.5 });

    \draw [->](5.5, 1.25)--(6.5,1.25) ; 
    \draw (6,1.25) node[above]{$\Phi$} ; 
    \draw (7, 0.5) rectangle (12, 2.5) ; 
    \draw [->](10.5, 0)--(10.5, -0.5);
    \draw (10.5, -0.25) node[right] {\text{Regular subdivision}};

    \draw (7, -1) rectangle (12, -3) ; 
    \draw [dashed](8.67, -1)--(8.67, -3) ; 
    \draw [dashed](10.34, -1)--(10.34, -3) ;
    \draw[dashed](7, -1.66)--(12, -1.66) ; 
    \draw[dashed](7,-2.33)--(12,-2.33) ; 

     \draw [->](6.5 , -2)--(5.5, -2) ;
     \draw(6 , -2)node[above] {$\Phi^{-1}$} ; 

    \draw[domain=0:5,smooth,variable=\x,blue, thick] 
        plot ({\x}, {0.5 + sin(deg(2*pi*\x/5))-3});

    \draw[domain=0:5,smooth,variable=\x,blue, thick] 
        plot ({\x}, {2.5 + sin(deg(2*pi*\x/5))-3});

    \draw[domain=0:5,smooth,variable=\x,dashed] 
        plot ({\x}, {0.5 + sin(deg(2*pi*\x/5))-2.34});
    \draw[domain=0:5,smooth,variable=\x,dashed] 
        plot ({\x}, {0.5 + sin(deg(2*pi*\x/5))-1.66});

    \draw[blue, thick](0,-0.5)--(0, -2.5) ; 
        \draw[blue, thick](5,-0.5)--(5, -2.5) ;

    \draw[dashed](1.66, {0.5 + sin(deg(2*pi*1.66/5))-3})--(1.66, {2.5+ sin(deg(2*pi*1.66/5))-3}) ; 
    \draw[dashed](3.32, {0.5 + sin(deg(2*pi*3.32/5))-3})--(3.32, {2.5+ sin(deg(2*pi*3.32/5))-3}) ; 
    \draw (2.5, 1.5) node{\huge$\omega$} ; 
    \draw(9.5, 1.5) node{\huge$\hat{\omega}$} ; 
    \end{tikzpicture}
    \caption{Partionning of $\omega$}
    \label{fig:partition_bis}
\end{figure}
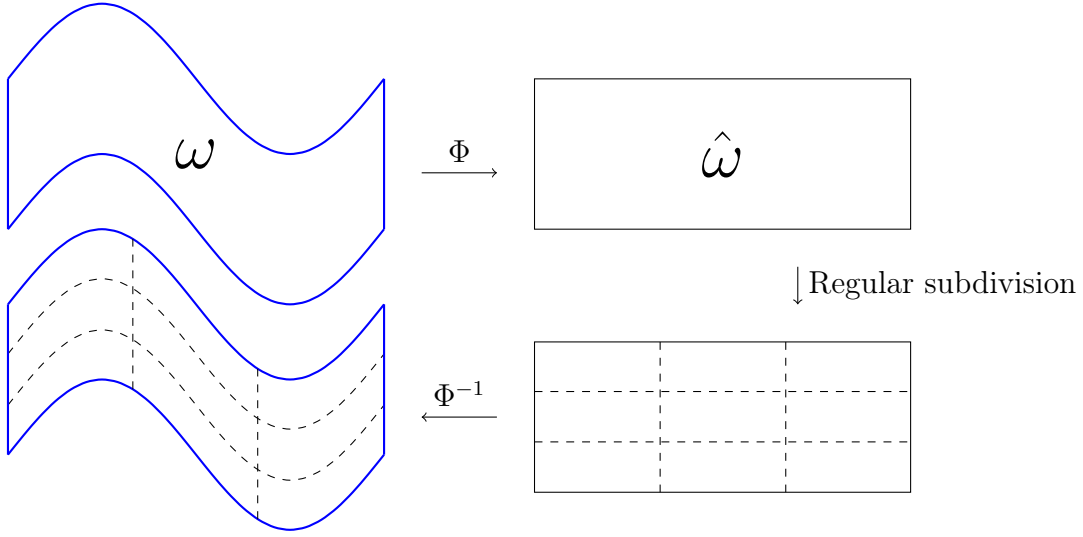
\begin{rem}
    It is important to see that the hypothesis "$\Phi (\omega)$ is convex" is not trivial at all. We have considered the case where $\Phi(\omega)$ is a parallelepiped, that is, $
\mathcal{I}(x_0) = \mathcal{I} \quad \text{for all } x_0 \in H,$
but this assumption is not necessary, since convexity alone is sufficient. 
However, for the sake of simplicity, by eventually extending $\omega$ to a larger domain $\omega'$ in the $\bb$-direction beyond $\partial \omega^+$ and $\partial \omega^-$, as illustrated In Figure \ref{fig:inflatetube}, 
we shall assume that $\Phi(\omega)$ is a parallelepiped. 
Such an extension does not affect the analysis, since the solution vanishes on $\omega' \setminus \omega$.

    \begin{figure}
        \centering
        \begin{tikzpicture}
               \draw[domain=0:5,smooth,variable=\x,blue, thick] 
        plot ({\x}, { sin(deg(2*pi*\x/5))});
         \draw[domain=0:5,smooth,variable=\x,blue, thick] 
        plot ({\x}, { 3+sin(deg(2*pi*\x/5))});
         \draw[thick, blue](0, 0)--(1, 1.5)--(0, 3) ; 
         \draw[thick, blue](5,0)--(4,1.5)--(5, 3) ; 

         \draw [very thick, ->](5.5, 1.5)--(6.5, 1.5) ; 

                      \draw[domain=7:12,smooth,variable=\x,blue, thick] 
        plot ({\x}, { sin(deg(2*pi*(\x-7)/5))});
         \draw[domain=7:12,smooth,variable=\x,blue, thick] 
        plot ({\x}, { 3+sin(deg(2*pi*(\x-7)/5))});
         \draw[blue, dashed](7, 0)--(8, 1.5)--(7, 3) ; 
         \draw[ blue, dashed](12,0)--(11,1.5)--(12, 3) ;

         \draw[blue, thick] (7, 0)--(7, 3) ; 
         \draw[blue, thick] (12, 0)--(12, 3) ; 

         \draw (2.5, 1.5) node{$\omega$} ; 
         \draw(9.5, 1.5) node{$\omega'$} ;

        \end{tikzpicture}
        \caption{Inflation of $\omega$ into $\omega'$}
        \label{fig:inflatetube}
    \end{figure}
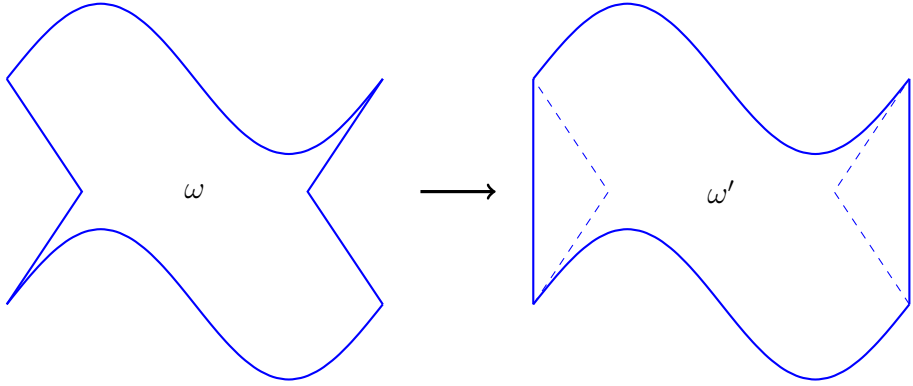
\end{rem}
We couldn't find an optimal bound for the Poincaré constant but for the special case of tube clusters, in the case were $\Phi:\Omega \rightarrow \hat{\Omega}$ is a diffeomorphism,  by working in the deformed space $\hat{\Omega}$ we were able to give a non trivial superior bound which introduces a constant $C_\Phi$. This way, we can proceed exactly as in the constant case of proposition \ref{theoreme} by considering nested tube clusters of the form of definition \ref{truetube} to prove proposition \ref{theoremenonconstant}.

\begin{prop}\quad\\
  Let $\eta>0$, $q\in (0,1)$, $p\in \mathbb{N}$ with $p>2$, $\Omega$ and $\tau\subset \Omega$ two tube clusters of the form of Definition \ref{truetube} and $\Phi$ a diffeomorphism such that $\Phi(\Omega)$ is a parallelepiped. For all tube clusters $\sigma\subset\Omega$ satisfying $\mathrm{diam}(\tau) \leq 2\eta \mathrm{dist}(\tau , \sigma)$, we can find a space $\mathrm{V}\subset\mathrm{L}^{2}(\tau)$
  where for some constant $C_{\mathrm{dim}}>0$
  \begin{equation}\label{DimensionBound2}
    \mathrm{dim}(\mathrm{V})\leq C_{\mathrm{dim}}p^{d+1}
  \end{equation}
  and such that for all right-hand sides $f\in  \mathrm{L}^2(\Omega)$ with $\mathrm{supp}(f)\subset \sigma$, the unique function
  $u\in\mathrm{H}^{1}_{0}(\Omega)$ satisfying $a(u,\varphi) = \langle f,\varphi\rangle_{\mathrm{L}^2(\Omega)}\;\forall\varphi\in \mathrm{H}^{1}_0(\Omega)$
  can be approximated on $\tau$ by $v\in \mathrm{V}$ satisfying the estimates for some constant $C>0$
  \begin{equation}
    \begin{aligned}
      & \Vert \nabla u-\nabla v\Vert_{\mathrm{L}^{2}(\tau)}\leq C \frac{p}{\mathrm{dist}(\tau, \sigma )} q^{p-1}\Vert f\Vert_{\mathrm{L}^2(\Omega)}\\
      & \Vert u-v\Vert_{\mathrm{L}^2(\tau)}\leq C q^{p}\Vert f\Vert_{\mathrm{L}^{2}(\Omega)}
    \end{aligned}.
  \end{equation}
\label{theoremenonconstant}
\end{prop}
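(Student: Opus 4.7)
The plan is to reproduce the iterative scheme of Proposition \ref{theoreme}, substituting the flow-adapted ingredients developed in this section: the non-constant Péclet-robust Caccioppoli estimate $\Vert \nabla u\Vert^2_{\mathrm{L}^2(\tau)}\le \delta^{-2}\Vert u\Vert^2_{\mathrm{L}^2(\tau_\delta)}$ and the $\Phi$-deformed Poincaré--Wirtinger lemma established just above. Concretely, I would select a hyperplane $H$ and a connected subset $\Gamma_0 \subset H$ representing $\tau$ via Definition \ref{truetube}, set $\delta := \mathrm{dist}(\tau,\sigma)$, and define nested tube clusters $\omega_j$ from the hyperplane thickenings $\Gamma_{0,(1-j/p)\delta}$ so that $\tau = \omega_p \subset \cdots \subset \omega_1 \subsetneq \tau_\delta$, with consecutive tubes separated by a transverse distance $\delta/p$ in $H$, which is exactly the transverse gap required by the Caccioppoli estimate.

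Starting from $u_1 \in \mathcal{H}(\omega_1)$ obtained by zero-extension of $u$, I would iterate for $i=1,\dots,p$ the following two-step cycle. First, the deformed approximation lemma furnishes a subspace $\mathrm{V}_i$ with $\dim \mathrm{V}_i \le \ell^d$ and $v_i \in \mathrm{V}_i$ satisfying
\[
\Vert u_i - v_i\Vert_{\mathrm{L}^2(\omega_i)} \le C_\Phi\,\frac{\sqrt{d}}{\pi}\,\frac{\mathrm{diam}(\omega_i)}{\ell}\,\Vert \nabla u_i\Vert_{\mathrm{L}^2(\omega_i)}.
\]
Second, setting $u_{i+1} := (u_i-v_i)|_{\omega_{i+1}}$, which still lies in $\mathcal{H}(\omega_{i+1})\cap \mathrm{H}^{1}(\omega_{i+1})$, the non-constant Caccioppoli estimate gives
\[
\Vert \nabla u_{i+1}\Vert_{\mathrm{L}^2(\omega_{i+1})} \le \frac{p}{\delta}\,\Vert u_i - v_i\Vert_{\mathrm{L}^2(\omega_i)} \le c\,\frac{p}{\ell}\,\Vert \nabla u_i\Vert_{\mathrm{L}^2(\omega_i)}
\]
with $c := 2 C_\Phi (\eta+1)\sqrt{d}/\pi$, using $\mathrm{diam}(\omega_i)\le 2(\eta+1)\delta$ together with the admissibility condition $\mathrm{diam}(\tau)\le 2\eta\,\mathrm{dist}(\tau,\sigma)$. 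Composing $p$ times and setting $v := (v_1+\cdots+v_p)|_\tau \in \mathrm{V}$, one obtains $\dim \mathrm{V} \le p\ell^d$ together with the two error factors $(cp/\ell)^{p-1}$ on the gradient and $(cp/\ell)^{p}$ on the function itself, after a concluding application of the approximation lemma on $\omega_p$.

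To obtain a bound in terms of $\Vert f \Vert_{\mathrm{L}^2(\Omega)}$, I would apply one final Caccioppoli estimate between $\omega_1$ and the complement of $\sigma$ (separated by transverse distance $\delta/p$) to get $\Vert \nabla u_1\Vert_{\mathrm{L}^2(\omega_1)} \le (p/\delta)\Vert u\Vert_{\mathrm{L}^2(\Omega)}$, and the energy identity $a(u,u) = \langle f,u\rangle_{\mathrm{L}^2(\Omega)}$ combined with $\inf_\Omega \beta \ge \beta_0$ to conclude $\Vert u\Vert_{\mathrm{L}^2(\Omega)} \le \beta_0^{-1}\Vert f\Vert_{\mathrm{L}^2(\Omega)}$. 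Choosing $\ell := \lceil c p^2/(q(p-1)) \rceil$ yields $(cp/\ell)^p \le e^{-1}q^p$ and, since $p>2$, also $\ell \le (2c/q + 1/2)p$, which combine to give the stated error bounds with $C = 1/(e\beta_0)$ and $C_{\mathrm{dim}} = (2c/q + 1/2)^d$.

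The main obstacle, compared with the constant case, is keeping the deformation constant $C_\Phi$ from accumulating across the $p$ iterations, which would destroy Péclet robustness. I would resolve this by invoking a \emph{single} global diffeomorphism $\Phi$ on $\tau_\delta$, namely the straightening map constructed at the beginning of this section, and defining $C_\Phi$ via suprema of $\Vert D\Phi\Vert$, $\Vert D\Phi^{-1}\Vert$ and the Jacobian determinants taken over the whole of $\tau_\delta$ (which is a parallelepiped after deformation, by the remark preceding Proposition~\ref{theoremenonconstant}). Since every $\omega_i \subset \tau_\delta$, this single $C_\Phi$ dominates the local constant at every step and can be absorbed into the single factor $c$, so the geometric decay in $p$ is preserved exactly as in the constant case.
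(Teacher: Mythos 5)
Your proof is correct and follows essentially the same route the paper prescribes: iterate the non-constant Péclet-robust Caccioppoli estimate with the $\Phi$-deformed Poincaré--Wirtinger lemma on nested tube clusters $\omega_p\subset\cdots\subset\omega_1\subset\tau_\delta$, exactly as in Proposition~\ref{theoreme}, and conclude with the energy identity. You also correctly identify and resolve the one subtlety the paper leaves implicit when it writes ``we can proceed exactly as in the constant case,'' namely that $C_\Phi$ must be defined once globally over $\tau_\delta$ (rather than per $\omega_i$) so that it folds into a fixed factor $c$ and does not spoil the geometric decay in $p$.
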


 Our analysis relies on the fact that one can find $\delta >0$ such that $\tau_\delta$ is a tube cluster. However, this might not be the case depending on $\bb$. For instance on $(-1,1)^2$ with $\bb(x,y)=(x,y)$ the point $(0,0)$ belongs to every tube cluster (Figure \ref{fig:b=0}), which means that one can not find any $\tau_{\delta>0}$ and thus we cannot achieve our Caccioppoli estimate. This is one of the reason why we assumed $\bb$ not vanishing.

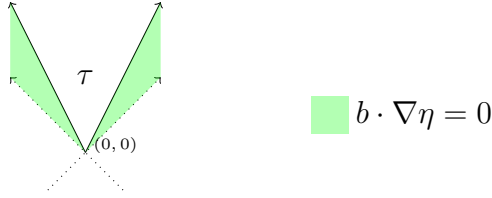
\begin{figure}
\centering
    \begin{tikzpicture}
        \draw[->](0,0) --(-1, 2) ; 
        \draw[->](0,0)-- (1, 2) ;  
        \draw[dotted, ->](-0.5, -0.5)--(1, 1); 
        \draw[dotted, ->](0.5, -0.5)--(-1, 1);
        \fill[green, opacity = 0.3] (0,0)--(-1, 1)--(-1,2)--(0,0) ; 
        \fill[green, opacity = 0.3] (0,0)--(1, 1)--(1,2)--(0,0) ; 
        \fill[green , opacity = 0.3] (3, 0.25) rectangle (3.5, 0.75) ; 
        \draw (4.5, 0.5) node{$b\cdot\nabla \eta = 0$} ;
        \draw(0.4,0.1) node {\tiny$(0,0)$} ; 
        \draw(0,1)node{$\tau$}  ; 
    \end{tikzpicture}
    \caption{tube cluster cannot be nested: $(0,0)\in\tau\cap\tau_\delta \  \forall \delta>0$}
    \label{fig:b=0}
\end{figure}

In this analysis, we attempted to transpose the independence observed in the constant case between the flow direction and a basis of $\tau_\perp$. To do so, it was necessary to generalize the role of a basis vector when the field $\bb$ is no longer constant. In doing so, we effectively introduced, without explicitly naming it, the notion of foliations. Future work could investigate whether approaching the problem directly through the framework of foliations yields further insights, especially regarding the minimal regularity that $\bb$ must satisfy in order for our analysis to apply. For now, however, we limit ourselves to our current method.

\subsection{Partitioning}
In the case where $\bb$ isn't constant it is not clear how to split into tube clusters. Here we are describing one strategy susceptible to work but we wont get in the theoretical requirements on $\bb$ to determine the scope of suitable vector fields. We consider $\tau$ some tube cluster of $\Omega$.

The idea is to inspire ourselves of the constant case as the role of $\tau_\perp$ is equivalent to the one of the hyperplane $H$. Indeed, from the definition of tube clusters, we know that there exists some connected domain $\Gamma_0$ of $\mathbb{R}^{d-1}$ such that any field line of $\tau$ go through $\Gamma_0$. We can introduce $\pi_{\bb}$ the projection along $\bb$ on $\Gamma_0$ defined as $\pi_{\bb}(x) = x_0$ where $x\in \bm{\phi}(x_0, \mathcal{I})$ and $x_0\in\Gamma_0$. This way ,eventually omitting a zero measure space as In Figure \ref{fig:explication1} and \ref{fig:explication2}, all the points of a field line get mapped on the same point exactly as in the constant case.
\begin{figure}
    \centering
    \begin{minipage}{0.45\textwidth}
        \centering
        \begin{tikzpicture}[scale=0.9]
            
            \foreach \angle in {0, 60, 120} {
                \draw[thick, ->, >=stealth, blue] (0,0) ++(\angle:2) arc[start angle=\angle, end angle=\angle+60, radius=2];
            }
            \foreach \angle in {180, 240, 300} {
                \draw[thick, ->, >=stealth, green] (0,0) ++(\angle:2) arc[start angle=\angle, end angle=\angle+60, radius=2];
            }
            \draw (2,0) node{$\times$};
            \draw (2,0.25) node[right]{$x$};
            \draw (-2,0) node{$\times$};
            \draw (-2,0.25) node[left]{$x'$};
            \draw[thick, purple] (-2.2, 0)--(2.2, 0);
            \draw(0, 0.3) node[purple]{$\Gamma_0$};

            \draw[purple, thick](4,-1.5)--(4, 1.5);
            \draw[thick , blue](4,-0.7)--(6, -0.7); 
            \draw[thick , green](4,0.7)--(6, 0.7); 
            \draw (4,0.7) node[left]{$\pi_{\bb}(x)$}; 
            \draw (4,-0.7) node[left]{$\pi_{\bb}(x')$}; 
            \draw (4,0.7) node{$\times$}; 
            \draw (4,-0.7) node{$\times$}; 
            \node[anchor=south west,inner sep=0,outer sep=0, text=blue] at (1.5,1.5) {$\tau$};
            \clip (1.5,1.5) rectangle (2,1.6);
            \node[anchor=south west,inner sep=0,outer sep=0, text=green] at (1.5,1.5) {$\tau$};
        \end{tikzpicture}
        \caption{Field lines unsuitable for our strategy}
        \label{fig:explication1}
    \end{minipage}
    \hspace{1cm}
    \begin{minipage}{0.45\textwidth}
        \centering
        \begin{tikzpicture}[scale=0.9]
            \draw[dashed, very thick] (-2.5, 0)node[above]{$\epsilon$}--(0,0);
            \foreach \angle in {0, 60, 120} {
                \draw[thick, ->, >=stealth, blue] (0,0) ++(\angle:2) arc[start angle=\angle, end angle=\angle+60, radius=2];
            }
            \foreach \angle in {180, 240, 300} {
                \draw[thick, ->, >=stealth, green] (0,0) ++(\angle:2) arc[start angle=\angle, end angle=\angle+60, radius=2];
            }
            \draw (2,0) node{$\times$};
            \draw (2,0.25) node[right]{$x$};
            \draw[thick, purple] (0, 0)--(2.2, 0);
            \draw(0, 0.3) node[purple]{$\Gamma_0$};

            \draw[purple, thick](4,-1.5)--(4, 1.5);
            \draw[thick , blue, dashed](4,0.7)--(6, 0.7); 
            \draw[thick , green, dashed](4.1,0.7)--(6, 0.7); 
            \draw (4,0.7) node[left]{$\pi_{\bb}(x)$}; 
            \draw (4,0.7) node{$\times$}; 
            \node[anchor=south west,inner sep=0,outer sep=0, text=blue] at (1.5,1.5) {$\tau$};
            \clip (1.5,1.5) rectangle (2,1.6);
            \node[anchor=south west,inner sep=0,outer sep=0, text=green] at (1.5,1.5) {$\tau$};
        \end{tikzpicture}
        \caption{Suitable $\Gamma_0$ on $\Omega\backslash\epsilon$}
        \label{fig:explication2}
    \end{minipage}
    \label{fig:explication}
\end{figure}

\noindent From there it appears that any splitting  $\psi:\Gamma_0\mapsto\{\gamma_1, \dots, \gamma_n\}$, with $\gamma_i\cap\gamma_j=\emptyset$ if $i\neq j$ and $\bigcup\gamma_i=\Gamma_0$, leads to tube clusters $\tau_i=\{ \bm{\phi}(x_0, s) \ | \ s\in \mathcal{I}(x_0),x_0\in \gamma_i\}$ partitioning $\tau$ (see Figure \ref{fig:partition2D}).

\begin{figure}
\begin{tikzpicture}[xscale=2 , yscale=1]
\centering
  \begin{scope}
    \draw[thick] (0, 0) rectangle (2, 2)node [right, scale=2]{$\tau$}; 
    \draw[purple, very thick](0, -0.2)--(0, 2.2)node[left]{$\Gamma_0$} ; 
    \draw[domain=0:2, smooth, variable=\x, blue] 
      plot ({\x}, {0.3+ 0.3 * sin(100 * pi * \x)});
      \draw[domain=0:2, smooth, variable=\x, cyan] 
      plot ({\x}, {0.8+ 0.3 * sin(100 * pi * \x)});
    \draw[domain=0:2, smooth, variable=\x, orange] 
      plot ({\x}, {1.5+ 0.3 * sin(100 * pi * \x)});
        \draw[orange] (0,1.5)node[left]{$x_2$} ; 
    \draw(0,1.5)node{$\times$} ; 
    \draw[cyan] (0,0.8)node[left]{$x_1$} ; 
    \draw(0,0.8)node{$\times$} ; 
    \draw[blue] (0,0.3)node[left]{$x_0$} ; 
    \draw(0,0.3)node{$\times$} ; 
  \end{scope}

  \begin{scope}[xshift=3.5cm]
    \draw[very thick, purple] (0, -0.2) -- (0, 2.2)node[left]{$\Gamma_0$}; 
    \draw[orange] (0,1.5)node[left]{$x_2$} ; 
    \draw(0,1.5)node{$\times$} ; 
    \draw[cyan] (0,0.8)node[left]{$x_1$} ; 
    \draw(0,0.8)node{$\times$} ; 
    \draw[blue] (0,0.3)node[left]{$x_0$} ; 
    \draw(0,0.3)node{$\times$} ; 
    \draw[dotted](-0.3, 1)--(0.3, 1) ; 
    \draw[very thick , green] (0.3, 1)--(0.3,1.5) node[right]{$\gamma_1$}--(0.3,2) ;
    \draw[very thick , red] (0.3, 0)--(0.3,0.5) node[right]{$\gamma_2$}--(0.3,1) ;
    \draw (1, 1) node {\Huge$\Leftrightarrow$} ;

    \end{scope}

  \begin{scope}[xshift=5.5cm]
  \fill[green!30] 
    (2, {1 + 0.3 * sin(100 * pi * 2)}) 
    -- plot[domain=2:0, smooth, variable=\x] ({\x}, {1 + 0.3 * sin(100 * pi * \x)}) 
    -- (0, 2) 
    -- (2, 2) 
    -- cycle;
     \fill[red!30] 
    (2, {1 + 0.3 * sin(100 * pi * 2)}) 
    -- plot[domain=2:0, smooth, variable=\x] ({\x}, {1 + 0.3 * sin(100 * pi * \x)}) 
    -- (0, 0) 
    -- (2, 0) 
    -- cycle;
    \draw[thick] (0, 0) rectangle (2, 2)node[right, scale= 2] {$\tau$}; 
    \draw[purple, very thick](0, -0.2)--(0, 2.2) ; 
    \draw[domain=0:2, smooth, variable=\x, blue] 
      plot ({\x}, {0.3+ 0.3 * sin(100 * pi * \x)});
      \draw[domain=0:2, smooth, variable=\x, cyan] 
      plot ({\x}, {0.8+ 0.3 * sin(100 * pi * \x)});
    \draw[domain=0:2, smooth, variable=\x, orange] 
      plot ({\x}, {1.5+ 0.3 * sin(100 * pi * \x)});
        \draw[orange] (0,1.5)node[left]{$x_2$} ; 
    \draw(0,1.5)node{$\times$} ; 
    \draw[cyan] (0,0.8)node[left]{$x_1$} ; 
    \draw(0,0.8)node{$\times$} ; 
    \draw[blue] (0,0.3)node[left]{$x_0$} ; 
    \draw(0,0.3)node{$\times$} ;
    \draw[domain=0:2, smooth, variable=\x, thick, dotted] 
      plot ({\x}, {1+ 0.3 * sin(100 * pi * \x)});
    \draw(2.2, 1.5) node[green]{$\tau_1$} ; 
    \draw(2.2, 0.5) node[red]{$\tau_2$} ; 
  \end{scope}

\end{tikzpicture}
    \caption{Different steps of the splitting}
    \label{fig:partition2D}
\end{figure}
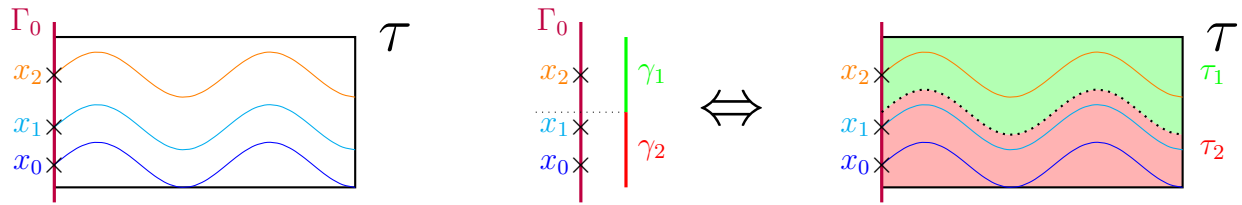
This way one can build a cluster tree on $\mathcal{T}(\Gamma_0)$ with any splitting and for any nodes $\tau'\in\mathcal{T}(\Gamma_0)$, the cluster $\tau=\pi_{\bb}^{-1}(\tau')$ is a tube cluster. Hence, building the cluster tree $\mathcal{T}(\Gamma_0)$ induces a tube cluster tree $\mathcal{T}(\tau) = \pi_{\bb}^{-1}(\mathcal{T}(\Gamma_0))$.

\noindent Our procedure is done in two steps, first we project along the field line and then we split. Unless one needs some advanced splitting, the algorithm complexity mainly comes from the projection along the field line as one as to find the intersection point of every field line with the hyperplane. Indeed, in the constant case a simple test on the scalar product of the points against $\bb^\perp$ was in off to split them in tube clusters. However, in the non constant case, one has to find the intersection of every points along $\bb$ with $\Gamma_0$ by iterating $p_{n+1} = p_n +\delta b(p_n)$ in the good direction until it reaches $\Gamma_0$. the splitting can be done as in Algorithm \ref{split_nonconstant}.
\begin{algorithm}
\caption{Split the points $t$ of $\tau$ into two parts in a deformed space}
\begin{algorithmic}[1]
\State $t'=\{\}$
\For{$p \in t$}
    \State Compute $p' \in \Gamma_0\subset \mathbb{R}^{d-1}$ by iterating $p_{n+1} = p_n + \delta\, \bb(p_n)$
    \State Add $p'$ to $t'$
\EndFor
\State Compute a vector $\bm{v}$ supported in $\Gamma_0$
\State Sort the points in $t'$ by increasing value of $\langle p', \bm{v} \rangle$, using:
\[
\texttt{sort}(t'.\texttt{begin()}, t'.\texttt{end()}, (p_1, p_2) \mapsto \langle p_1, \bm{v} \rangle < \langle p_2,\bm{v} \rangle)
\]
\State Let $t_1$ be the first half of $t'$, and $t_2$ the second half
\State \Return $(t_1, t_2)$
\end{algorithmic}
\label{split_nonconstant}

\end{algorithm}

The main computational cost of the algorithm lies in determining the intersection points between the field lines and the reference manifold $\Gamma_0$. This step involves iterating along the vector field $\bb$ for each point, which can become expensive for large datasets. In principle, the complexity could be significantly reduced by computing only one representative point per field line, or by leveraging a diffeomorphism that straightens the field lines. However, in the absence of a general method to construct such a transformation, we adopt a pragmatic approach: we select a sufficiently large step size $\delta$ to balance accuracy and computational efficiency when determining the intersection points.

Equipped with such a cluster tree, we get from the subsection 3.1 that we fall in the scope of proposition \ref{gros_th}.

\section{Numerical experiments}
\label{section4}

In order to test the performances of our new approach we are going to look at the resolution of the system \[Ax=y\] by the means of hierarchical matrices whose cluster tree are tube cluster tree.
Namely we will compute the $\mathcal{H}$-LU factorization associated with tube cluster tree performing $A\approx L_\mathcal{H}U_\mathcal{H}$ where $L_\mathcal{H}$ and $U_\mathcal{H}$ are triangular hierarchical matrices. With this factorization, we can use forward and backward substitution to find $x$ such that $Ax= y$ and we look at the error \[ err=\frac{\Vert x- \mrm{solve}(L_\mathcal{H}U_\mathcal{H}, Ax)\Vert}{\Vert x\Vert} .\] For our tests we use \texttt{HTOOL}, a \texttt{C++} library developed by Pierre Marchand, to perform $\mathcal{H}$-matrix arithmetic on matrices obtained using \texttt{FreeFem++}. We will highlight how with our clustering, the hierarchical approach leads to very promising results.

We consider the model problem: find $u \in H_0^1(\Omega)$ on the square domain $\Omega = [-1,1]\times[-1,1]$ such that
\begin{equation}
\begin{cases}
-\varepsilon \Delta u + \mathbf{b} \cdot \nabla u + 2u = (1 - |x|)(1 - |y|), & \text{in } \Omega, \\[5pt]
u = 0, & \text{on } \partial \Omega.
\end{cases}
\label{Pb:Dirichlet}
\end{equation}
and we are particularly interested in how the behavior depends on the Péclet number and the number of degrees of freedom.
The mesh (Delaunay's triangulation) and the stiffness matrix are produced by \texttt{FreeFem}. We study the influence of the diffusion parameter $\varepsilon$ by increasing the mesh size for different vector fields $\bb$. We will denote the hierarchical approximation of matrix $A$ by $A_\mathcal{H}$. We consider $\mathcal{\ell}^+$ the set the low rank blocks (admissible leaves) and $\mathcal{\ell}^-$ the one of the dense blocks (non-admissible leaves) and for a matrix $M$ we take $\#\mathrm{row}(M)$ the numbers of rows of $M$ (respectively $\#\mathrm{col}(M)$ its number of columns). We call compression of an $\mathcal{H}$-matrix the proportion of low ranks leaves, it is given by \[compr(A_\mathcal{H})= 1- \frac{\sum_{M\in\mathcal{\ell}^+}\mrm{rank}(M)(\#\mrm{row}(M)+\#\mrm{col}(M)) +\sum_{N\in\mathcal{\ell}^-} \#\mrm{row}(N)\#\mrm{col}(N) }{\#\mrm{row}(A) \#\mrm{col}(A)}\]
Because of the triangular nature, $L_\mathcal{H}$ and $U_\mathcal{H}$ can be written in one matrix that we will note $L_\mathcal{H}\backslash U_\mathcal{H}$. We are going to plot the compression of $L_\mathcal{H}\backslash U_\mathcal{H}$, the error on the resolution of the system using this factorization, and the time of the factorization.  In our tests we take $\bb=(1,0), \ \bb=(1, (0.5-y)cos(4\pi x))$ and $\bb=(0.5(y+0.8)\exp(x))$ (which all meet the condition \eqref{conditionbeta} on the computational domain), and the diffusion parameter $\varepsilon \in\{1, 10^{-2}, 10^{-4}, 10^{-6} \}$. The low rank approximations are done using adaptive cross approximation with a tolerance of $10^{-6}$ and we take the admissibility parameter $\eta=1$.

\noindent By straightening the fields line if needed, we partition the domain with the suited tube clusters as illustrated In Figure \ref{fig:clusteringtest}. As mentioned in remark \ref{restriction}, the minimal size of the cluster $minsize$ cannot be arbitrarily small with respect to the characteristic length of the advection stream $length$. In our tests we take $minsize= lenght/5$.

\begin{figure}
    \centering
    \begin{subfigure}[b]{0.45\linewidth}
        \centering
        \includegraphics[width=\linewidth, height=0.55\linewidth, trim = 0 0 0 60, clip]{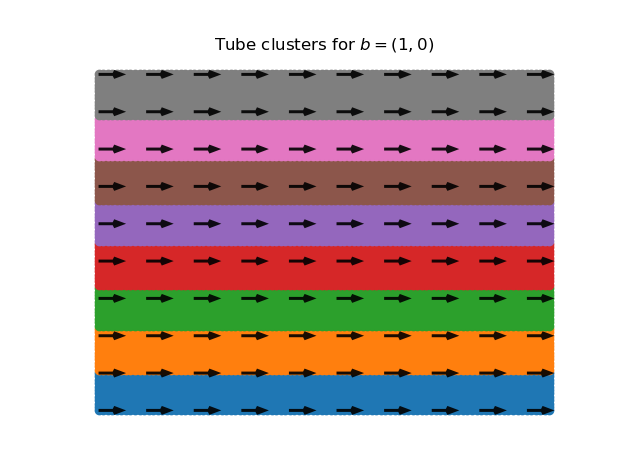}
        \caption{$\bb = (1, 0)$}
        \label{fig:err(1,0)}
    \end{subfigure}
    \begin{subfigure}[b]{0.45\linewidth}
        \centering
        \includegraphics[width=\linewidth,height=0.55\linewidth, trim = 0 0 0 60, clip]{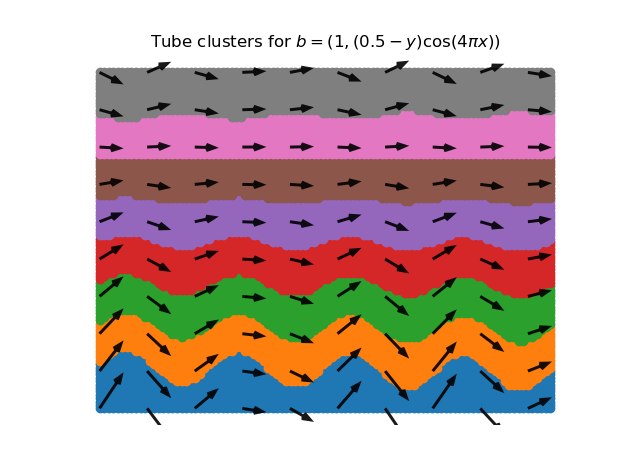}
        \caption{$\bb = (1, y\cos(x))$}
        \label{fig:err(1,ycos(x))}
    \end{subfigure}    
    \begin{subfigure}[b]{0.45\linewidth}
        \centering
        \includegraphics[width=\linewidth,height=0.55\linewidth, trim = 0 0 0 60, clip]{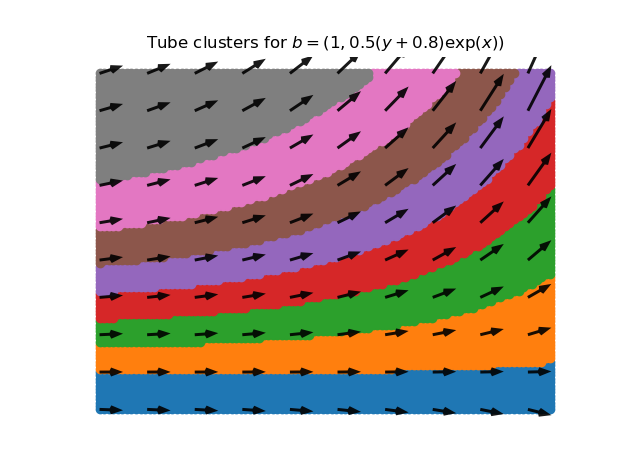}
        \caption{$\bb = (1, y\exp(x))$}
        \label{fig:err(1,yexp(x))}
    \end{subfigure}
    
    \caption{Tube cluster tree at depth $3$ for different convection $\bb$.}
    \label{fig:clusteringtest}
    \end{figure}

Equipped with those cluster tree we can compute the hierarchical $LU$ factorization of the matrices. In Figure \ref{fig:timeLU} we can see the computation time of the factorization, which seems to have a time complexity of $\mathcal{O}(n\log(n)^2)$. In Figure \ref{fig:comprLU} the compression of the factorization $L_\mathcal{H}\backslash U_\mathcal{H}$ and in Figure \ref{fig:errorLU} the error on the resolution of the system $Ax=y$ using this factorization. It appears that the error is bounded by the tolerance of the adaptive cross approximation (here $10^{-6}$) independently of the problem size.
\begin{figure}
    \centering
    \begin{subfigure}[b]{0.45\linewidth}
        \centering
        \includegraphics[width=\linewidth,height=0.55\linewidth, trim = 0 0 0 22, clip]{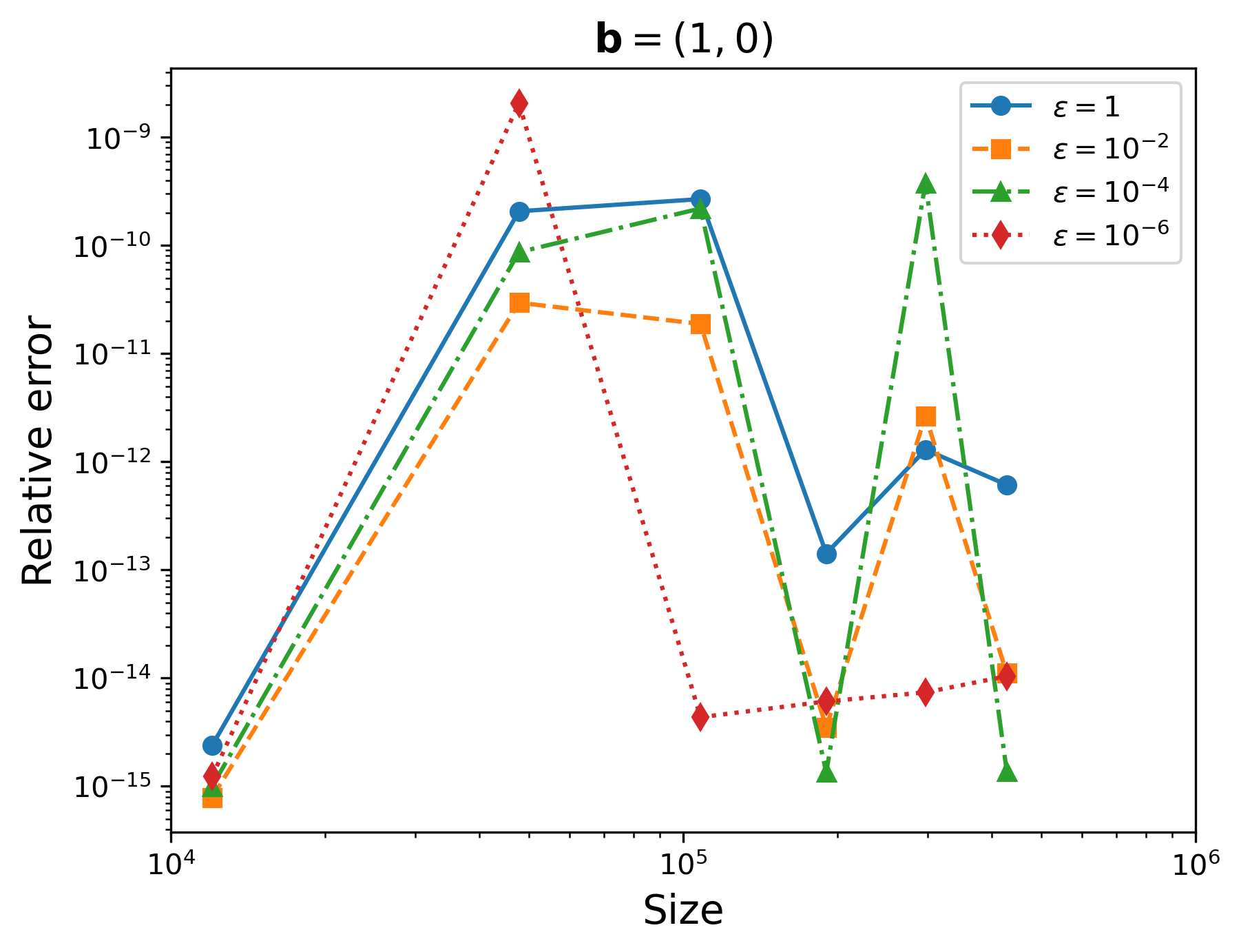}
        \caption{$\bb = (1, 0)$}
        \label{fig:err(1,0)}
    \end{subfigure}
    \begin{subfigure}[b]{0.45\linewidth}
        \centering
        \includegraphics[width=\linewidth,height=0.55\linewidth, trim = 0 0 0 22, clip]{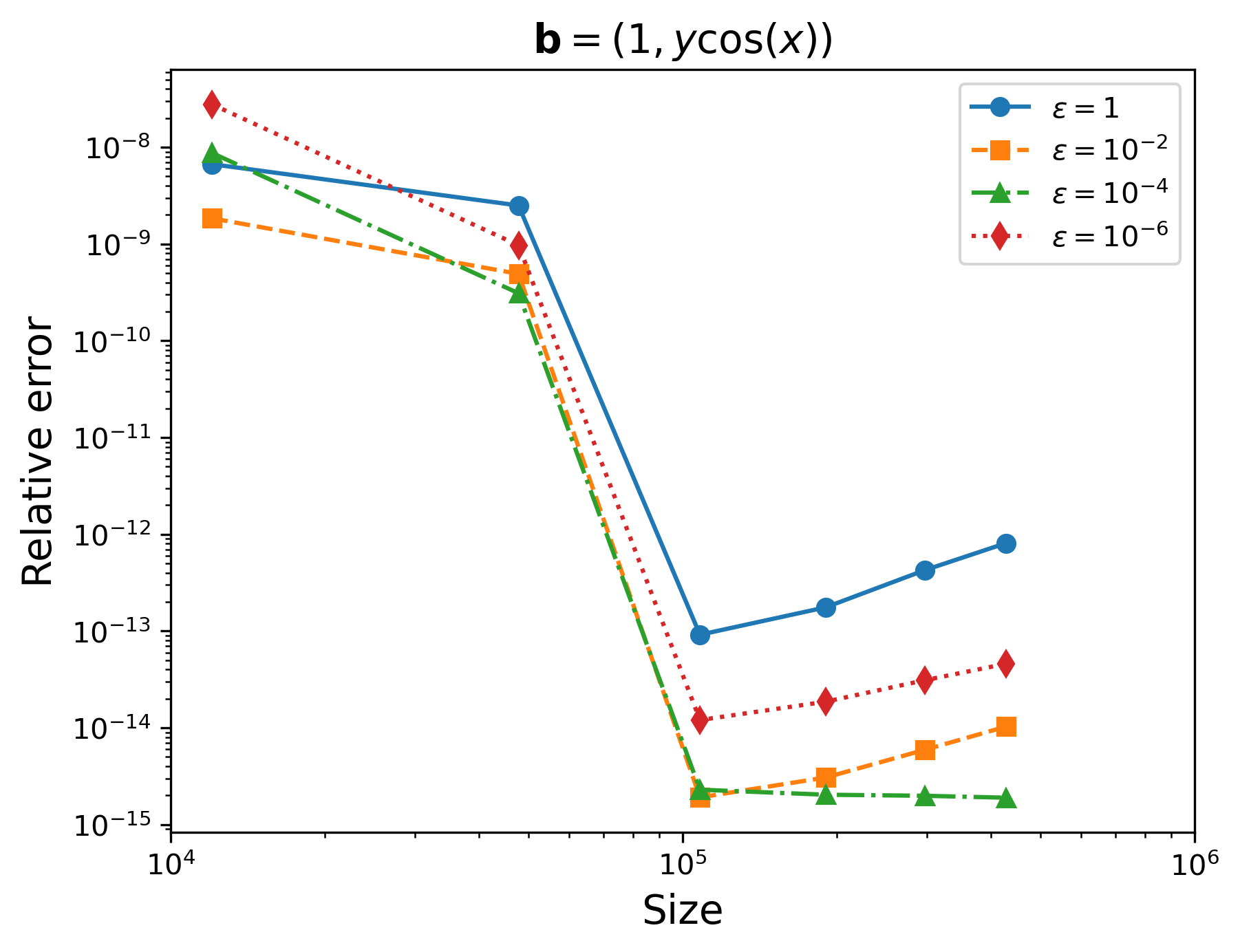}
        \caption{$\bb = (1, y\cos(x))$}
        \label{fig:err(1,ycos(x))}
    \end{subfigure}    
    \begin{subfigure}[b]{0.45\linewidth}
        \centering
        \includegraphics[width=\linewidth,height=0.55\linewidth, trim = 0 0 0 22, clip]{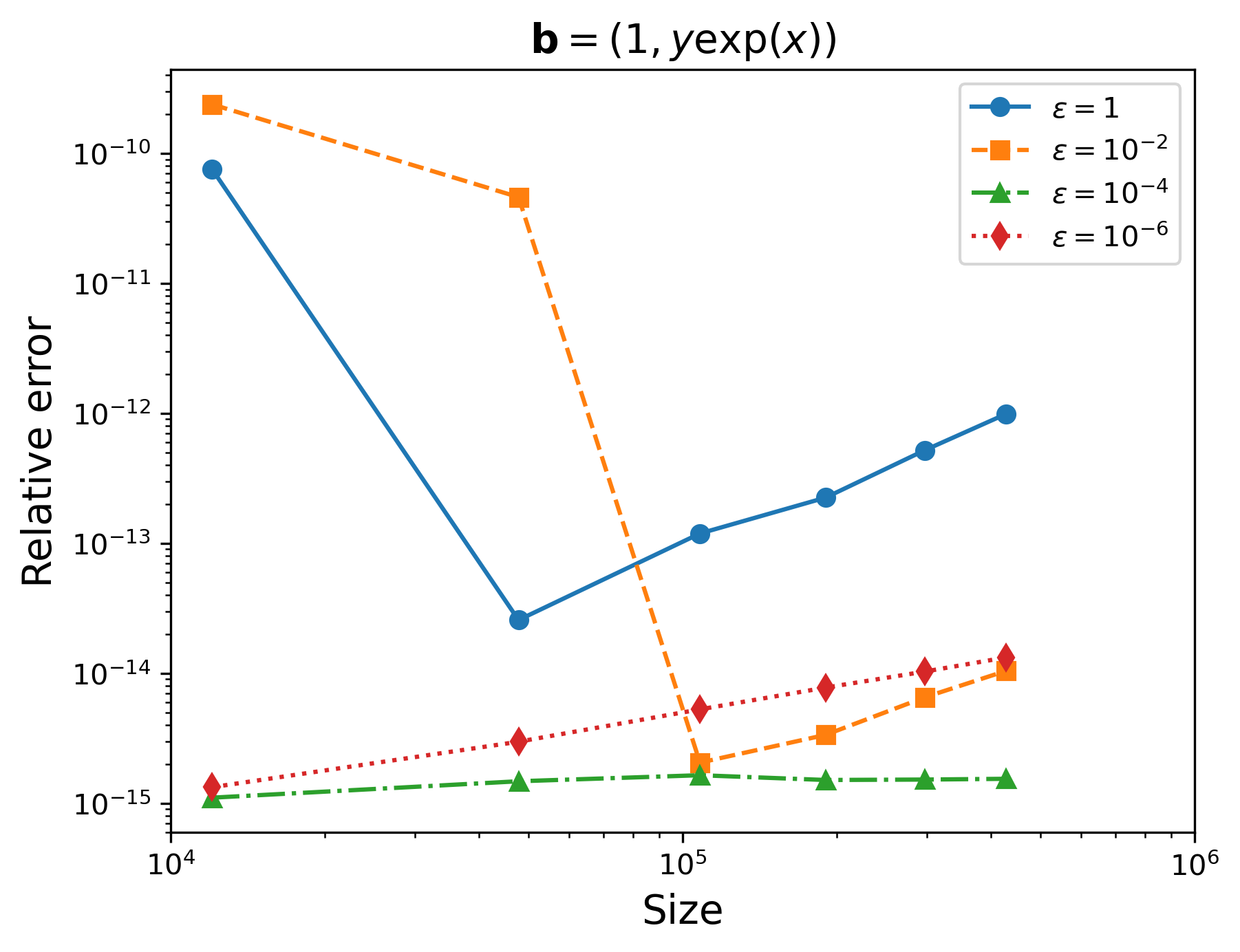}
        \caption{$\bb = (1, y\exp(x))$}
        \label{fig:err(1,yexp(x))}
    \end{subfigure}
    
    \caption{Precision of the $\mathcal{H}$-LU factorization for FreeFem matrices of problem \eqref{Pb:Dirichlet} (TGV = -2) for different convection $\bb$.}
    \label{fig:errorLU}
\end{figure}

\begin{figure}
    \centering
    \begin{subfigure}[b]{0.45\linewidth}
        \centering
        \includegraphics[width=\linewidth,height=0.55\linewidth, trim = 0 0 0 22, clip]{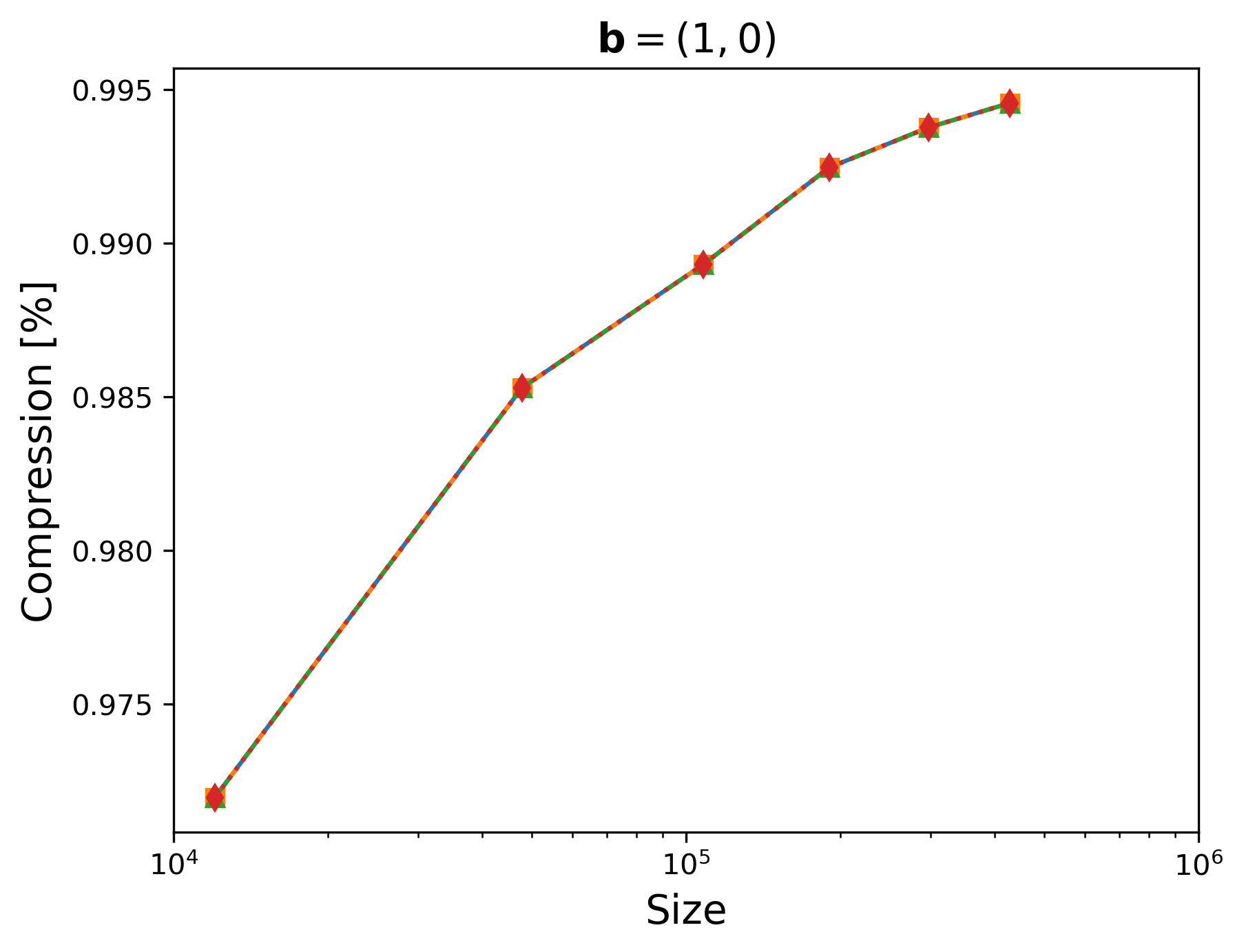}
        \caption{$\bb = (1, 0)$}
        \label{fig:err(1,0)}
    \end{subfigure}
    \begin{subfigure}[b]{0.45\linewidth}
        \centering
        \includegraphics[width=\linewidth,height=0.55\linewidth, trim = 0 0 0 22, clip]{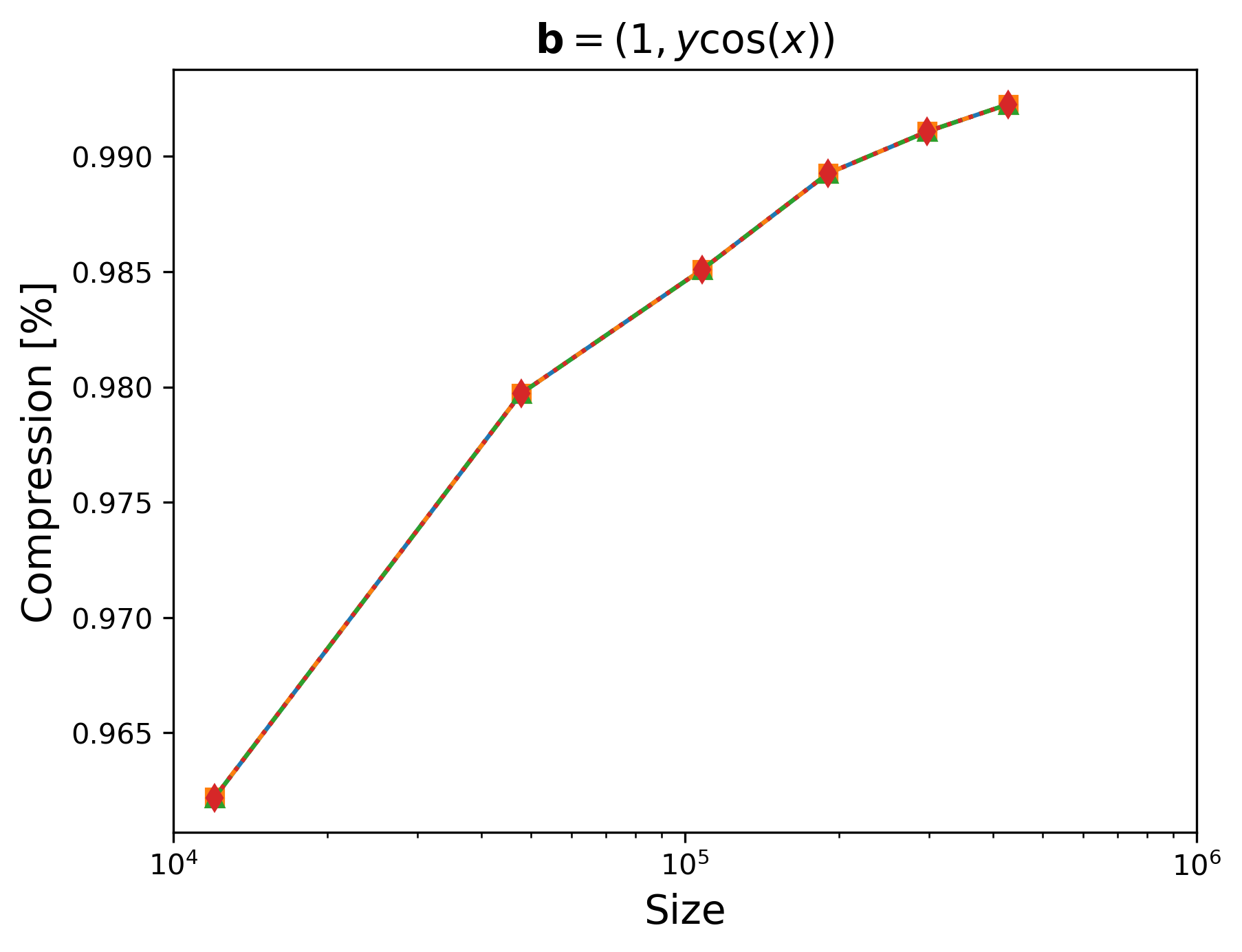}
        \caption{$\bb = (1, y\cos(x))$}
        \label{fig:err(1,ycos(x))}
    \end{subfigure}    
    \begin{subfigure}[b]{0.45\linewidth}
        \centering
        \includegraphics[width=\linewidth,height=0.55\linewidth, trim = 0 0 0 20, clip]{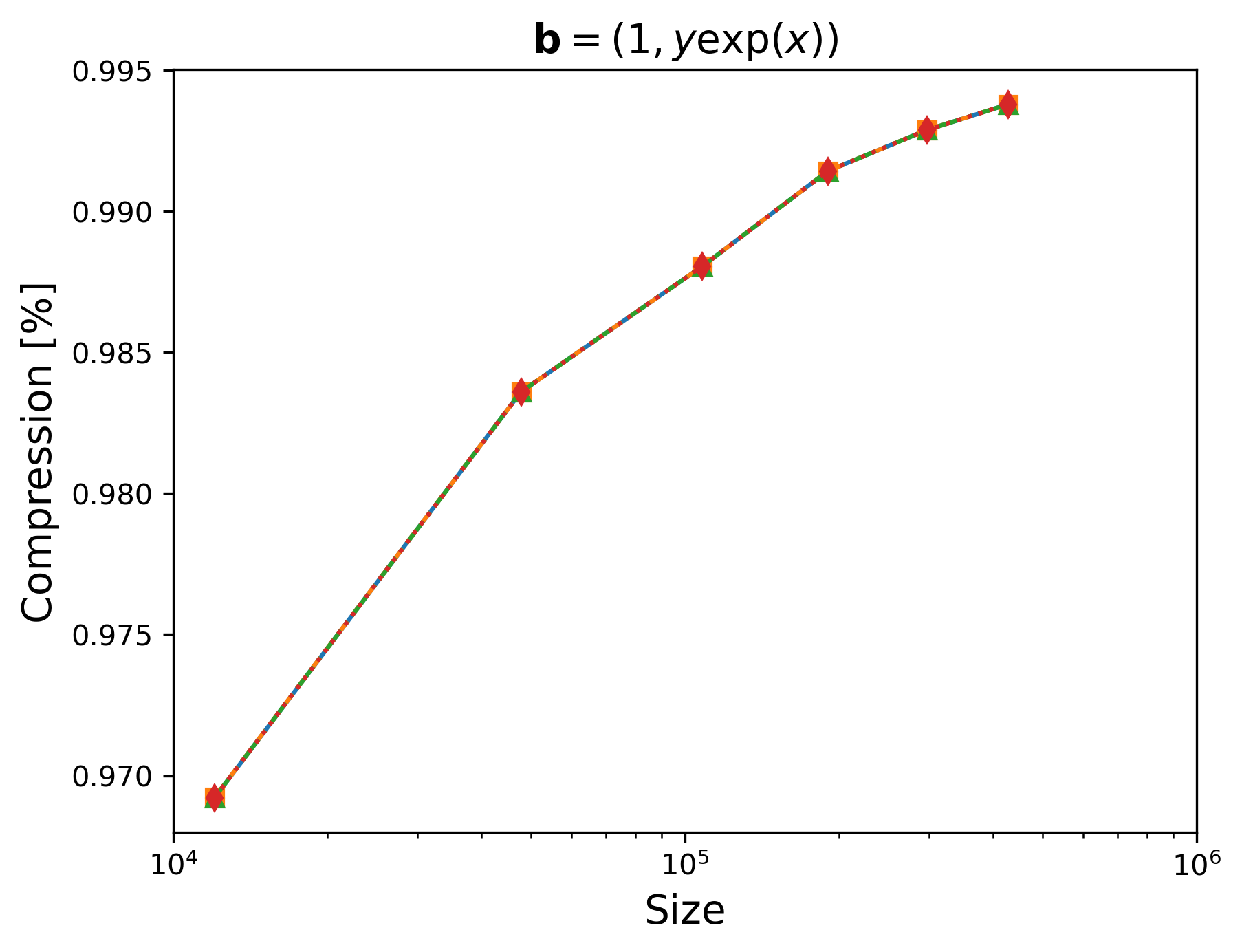}
        \caption{$\bb = (1, y\exp(x))$}
        \label{fig:err(1,yexp(x))}
    \end{subfigure}
    \caption{Compression of $L_\mathcal{H}U_\mathcal{H}$ for FreeFem matrices of problem \eqref{Pb:Dirichlet} (TGV = -2) for different \\ convection~$\bb$.}
    \label{fig:comprLU}
\end{figure}
\begin{figure}
    \centering
    \begin{subfigure}[b]{0.45\linewidth}
        \centering
        \includegraphics[width=\linewidth,height=0.55\linewidth, trim = 0 0 0 0, clip]{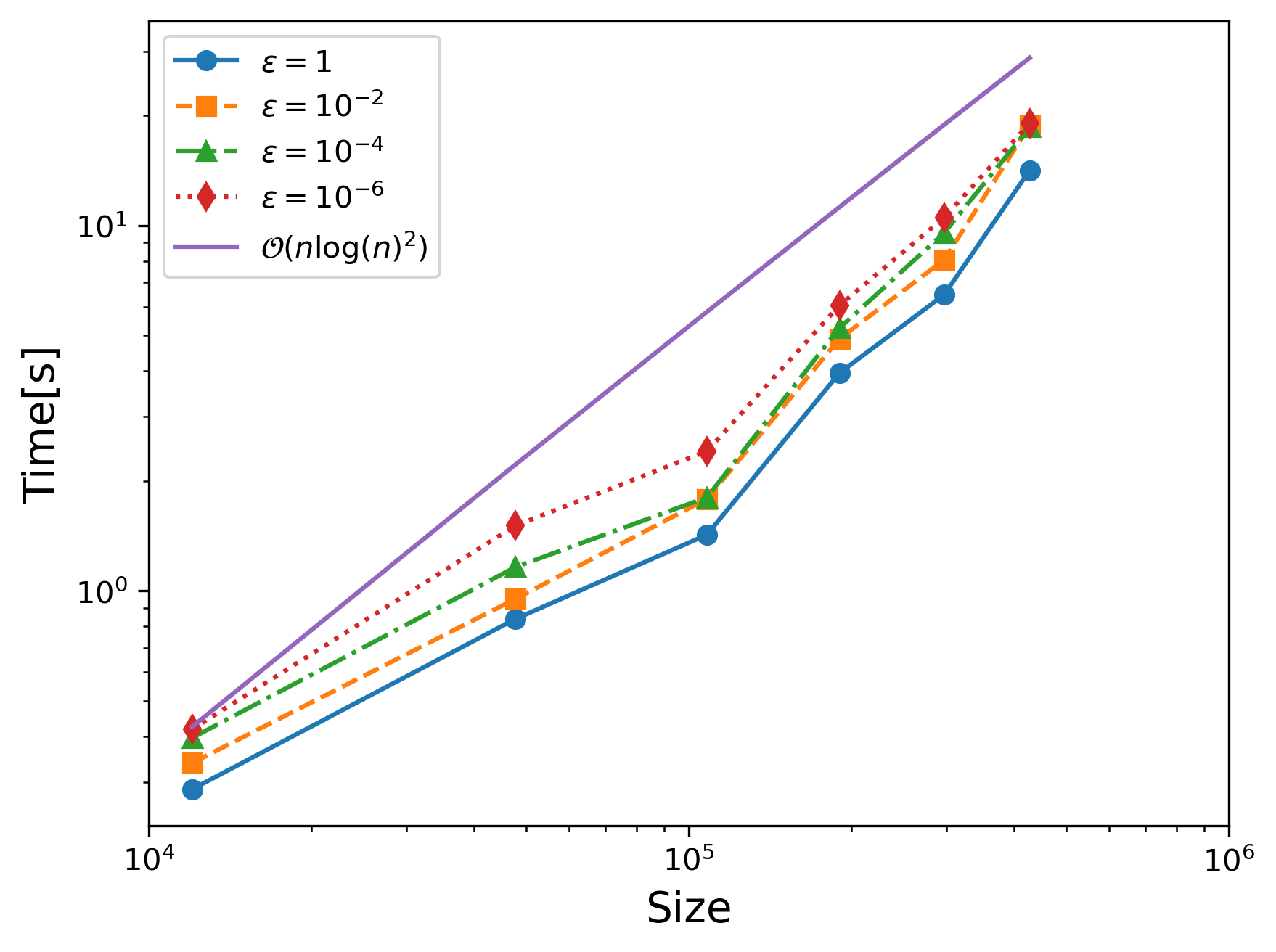}
        \caption{$\bb = (1, 0)$}
        \label{fig:err(1,0)}
    \end{subfigure}
    \begin{subfigure}[b]{0.45\linewidth}
        \centering
        \includegraphics[width=\linewidth,height=0.55\linewidth, trim = 0 0 0 0, clip]{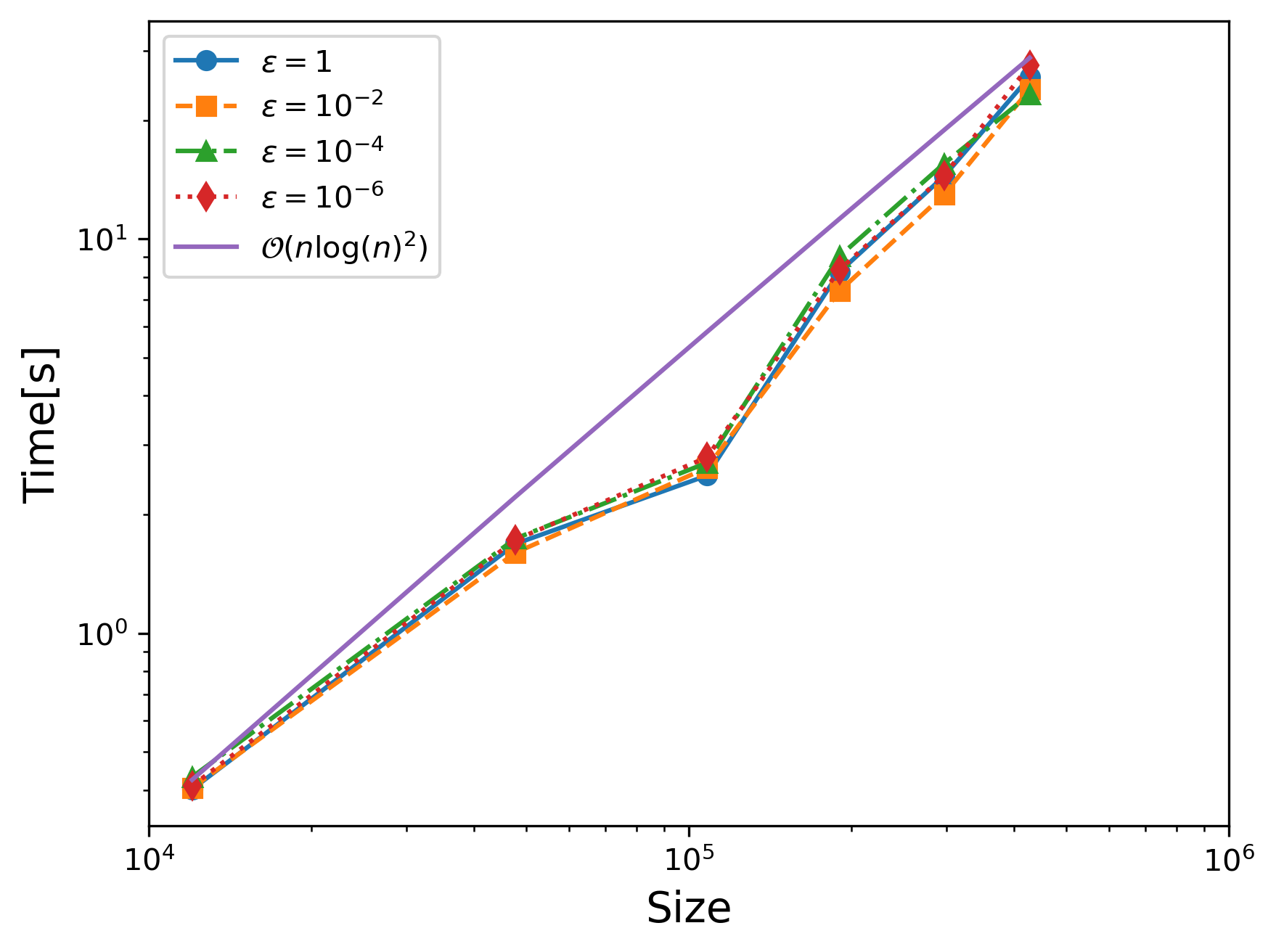}
        \caption{$\bb = (1, y\cos(x))$}
        \label{fig:err(1,ycos(x))}
    \end{subfigure}    
    \begin{subfigure}[b]{0.45\linewidth}
        \centering
        \includegraphics[width=\linewidth,height=0.55\linewidth, trim = 0 0 0 0, clip]{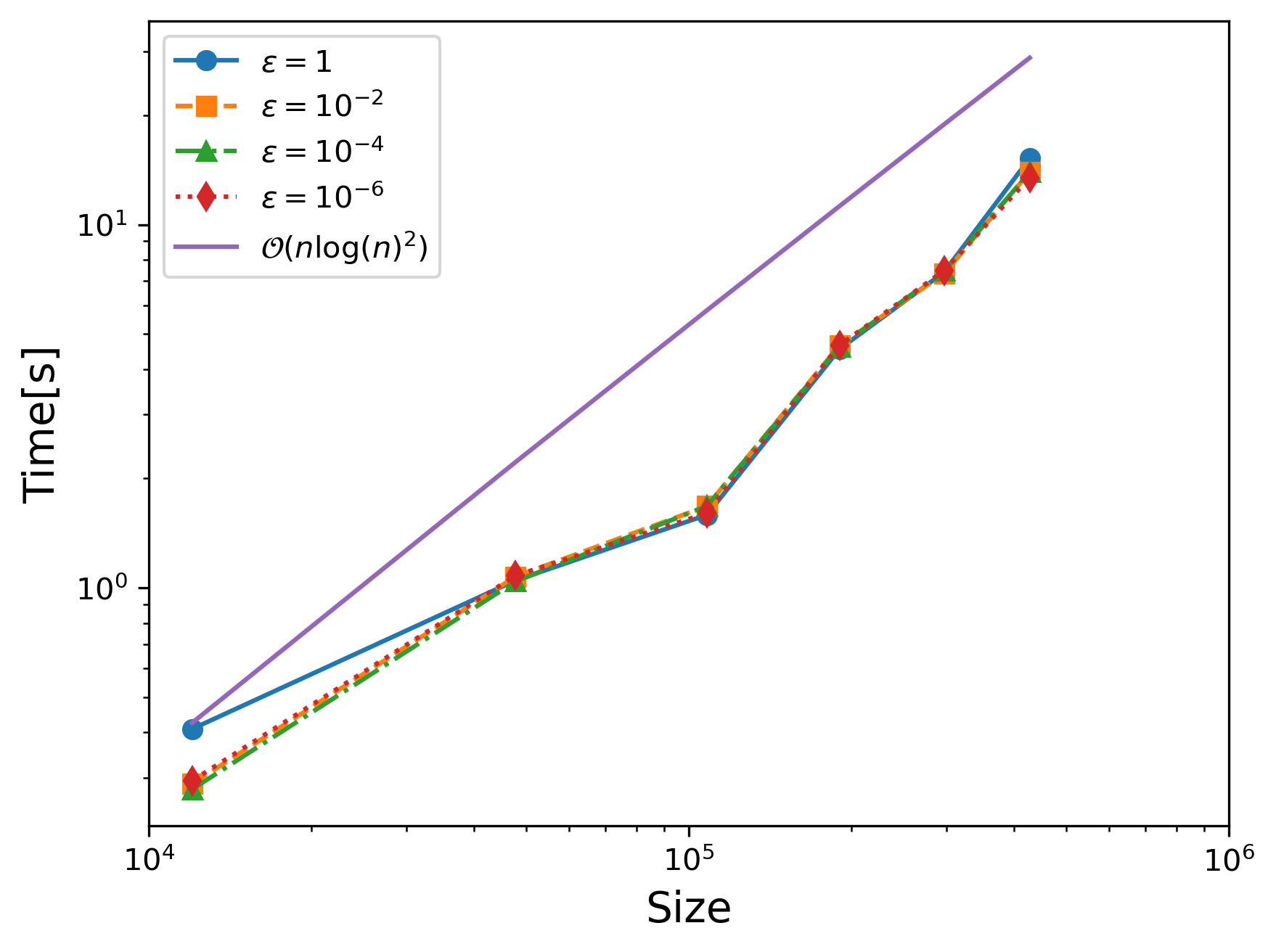}
        \caption{$\bb = (1, y\exp(x))$}
        \label{fig:err(1,yexp(x))}
    \end{subfigure}
    
    \caption{Time of the factorization $L_\mathcal{H}U_\mathcal{H}$ for FreeFem matrices of problem \eqref{Pb:Dirichlet} (TGV = -2) for different convection $\bb$.}
    \label{fig:timeLU}
\end{figure}
 
We did the same experiment with Neumann boundary conditions and the symmetrized formulation of the problem \eqref{Pb:Neumann},
\begin{equation}
\begin{cases}
 -\varepsilon \Delta u + \bb \cdot \nabla u + 2u = (1-|x|)(1-|y|), & \text{in } \Omega, \\
 \dfrac{\partial u}{\partial \mathbf{n}_\Omega} = 0, & \text{on } \partial \Omega.
\end{cases}
\label{Pb:Neumann}
\end{equation} 
In Figure \ref{fig:timeLUneumann} we show the time of the $\mathcal{H}$-LU factorization, in Figure \ref{fig:comprLUneumann} the compression ratio of $L_\mathcal{H}\backslash U_\mathcal{H}$ and in Figure \ref{fig:errorLUneumann} the error of the resolution $Ax=y$ using this factorization.

\begin{figure}
    \centering
    \begin{subfigure}[b]{0.45\linewidth}
        \centering
        \includegraphics[width=\linewidth,height=0.55\linewidth, trim = 0 0 0 22, clip]{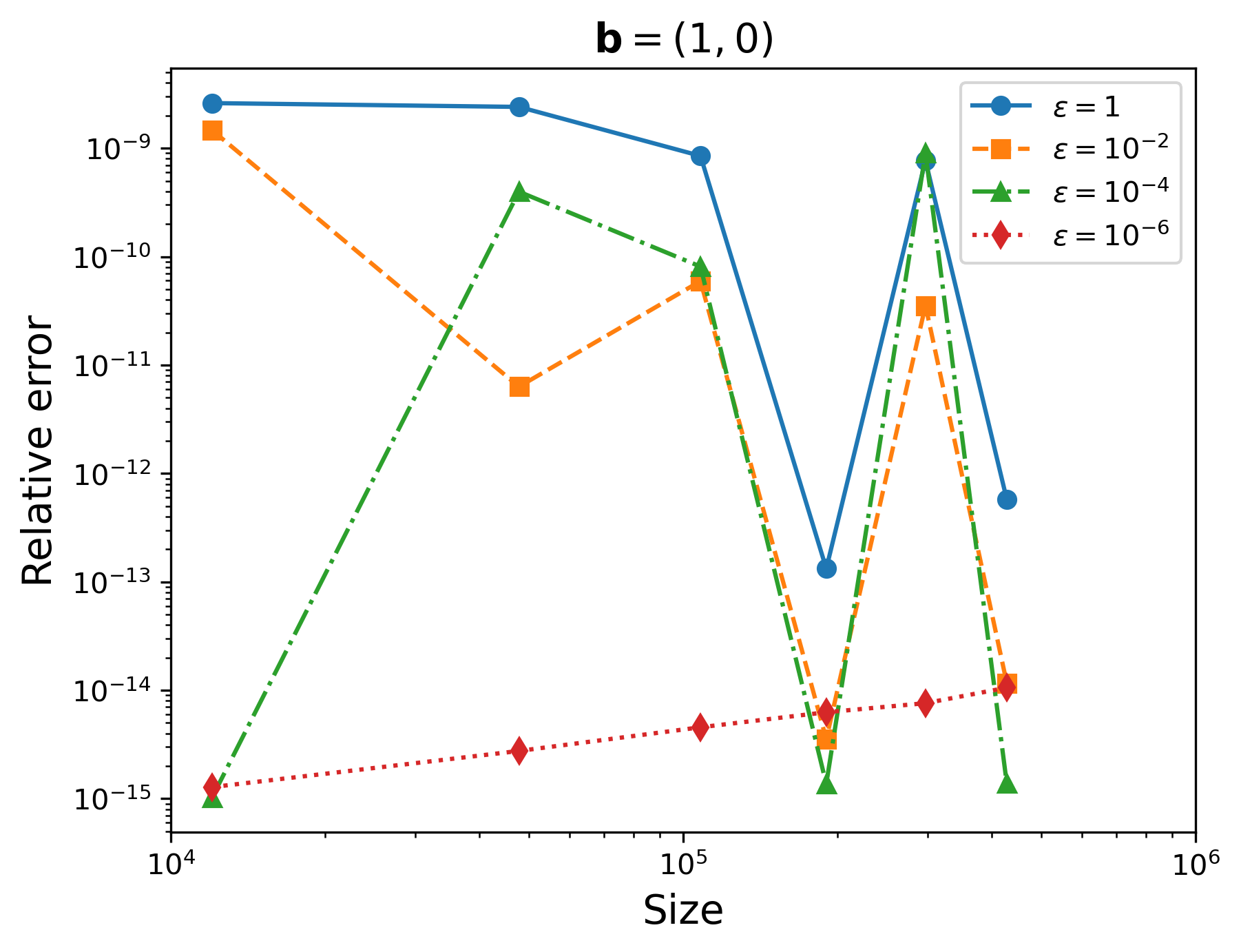}
        \caption{$\bb = (1, 0)$}
        \label{fig:err(1,0)}
    \end{subfigure}
    \begin{subfigure}[b]{0.45\linewidth}
        \centering
        \includegraphics[width=\linewidth,height=0.55\linewidth, trim = 0 0 0 20, clip]{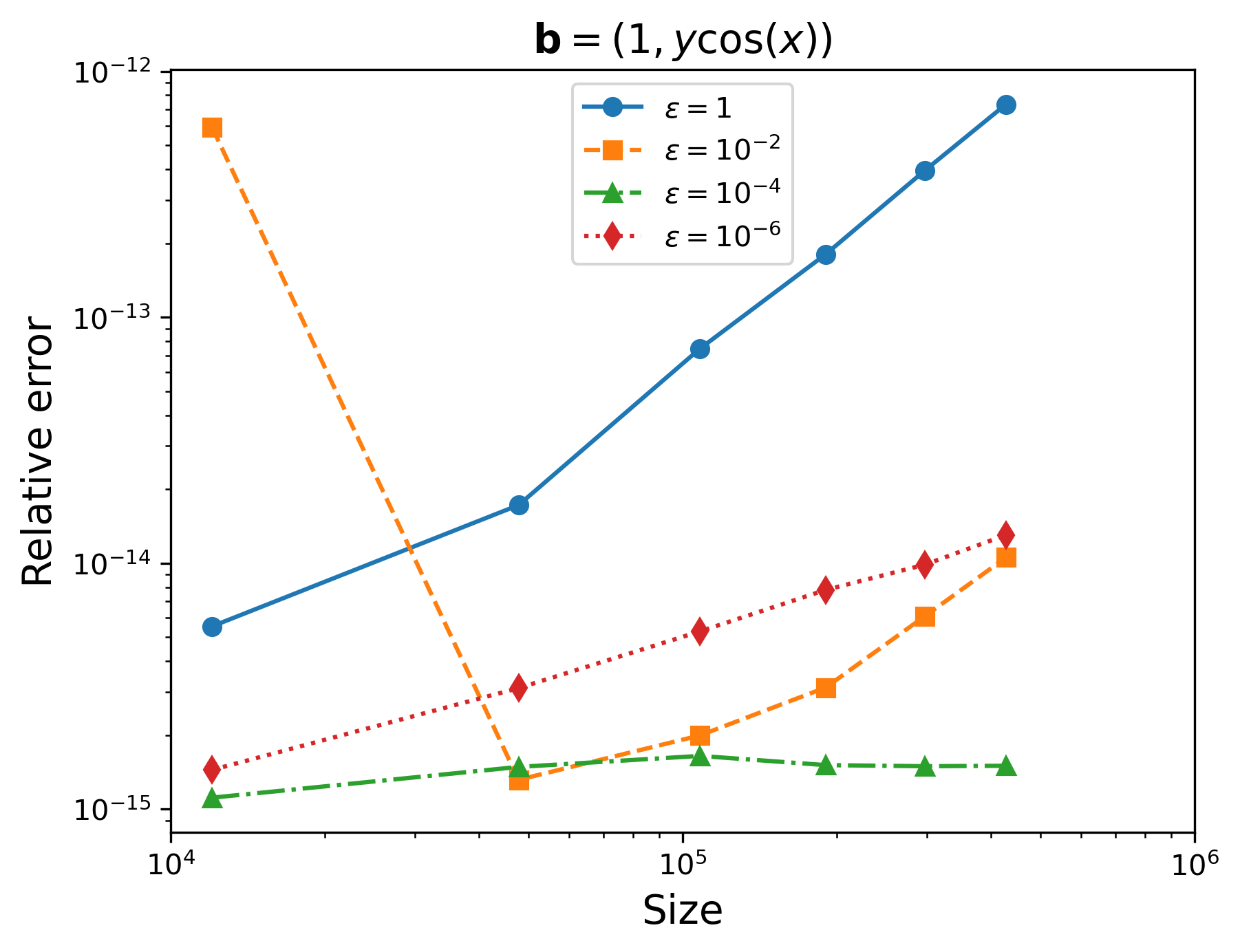}
        \caption{$\bb = (1, y\cos(x))$}
        \label{fig:err(1,ycos(x))}
    \end{subfigure}    
    \begin{subfigure}[b]{0.45\linewidth}
        \centering
        \includegraphics[width=\linewidth,height=0.55\linewidth, trim = 0 0 0 22, clip]{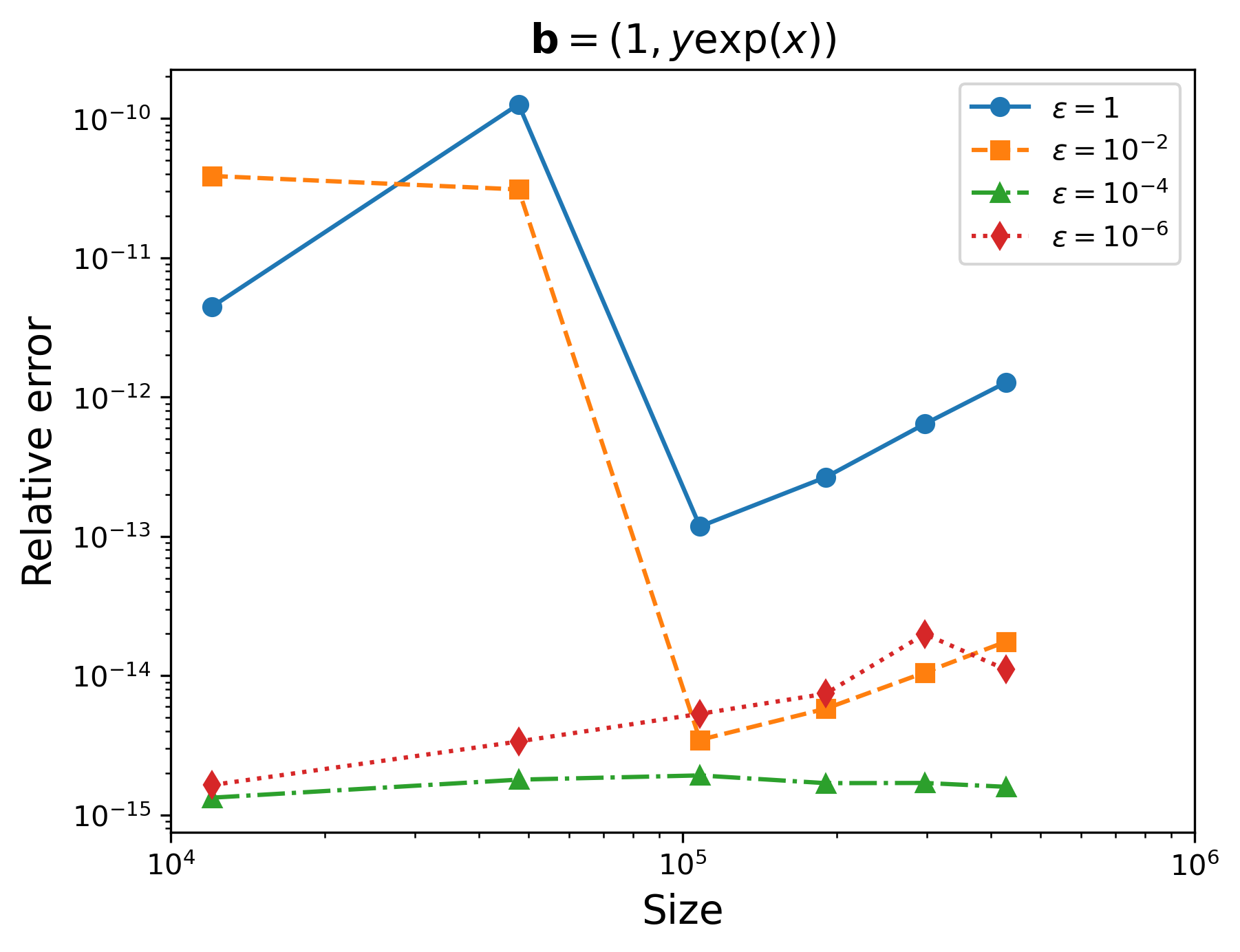}
        \caption{$\bb = (1, y\exp(x))$}
        \label{fig:err(1,yexp(x))}
    \end{subfigure}
    
    \caption{Precision of the $\mathcal{H}$-LU factorization for FreeFem matrices of problem \eqref{Pb:Neumann} for different \\convection~$\bb.$}
    \label{fig:errorLUneumann}
\end{figure}
\begin{figure}
    \centering
    \begin{subfigure}[b]{0.45\linewidth}
        \centering
        \includegraphics[width=\linewidth,height=0.55\linewidth, trim = 0 0 0 22, clip]{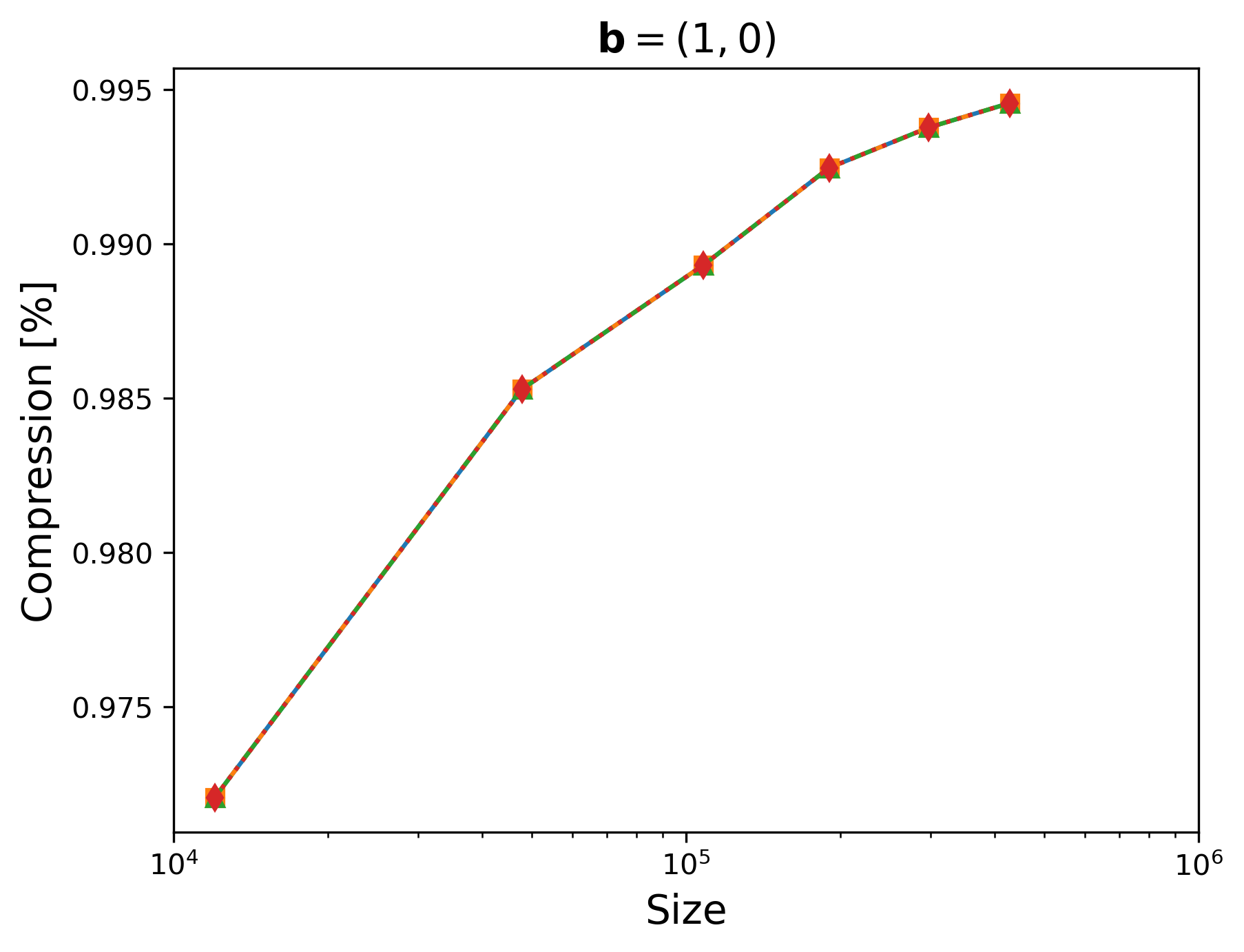}
        \caption{$\bb = (1, 0)$}
        \label{fig:err(1,0)}
    \end{subfigure}
    \begin{subfigure}[b]{0.45\linewidth}
        \centering
        \includegraphics[width=\linewidth,height=0.55\linewidth, trim = 0 0 0 22, clip]{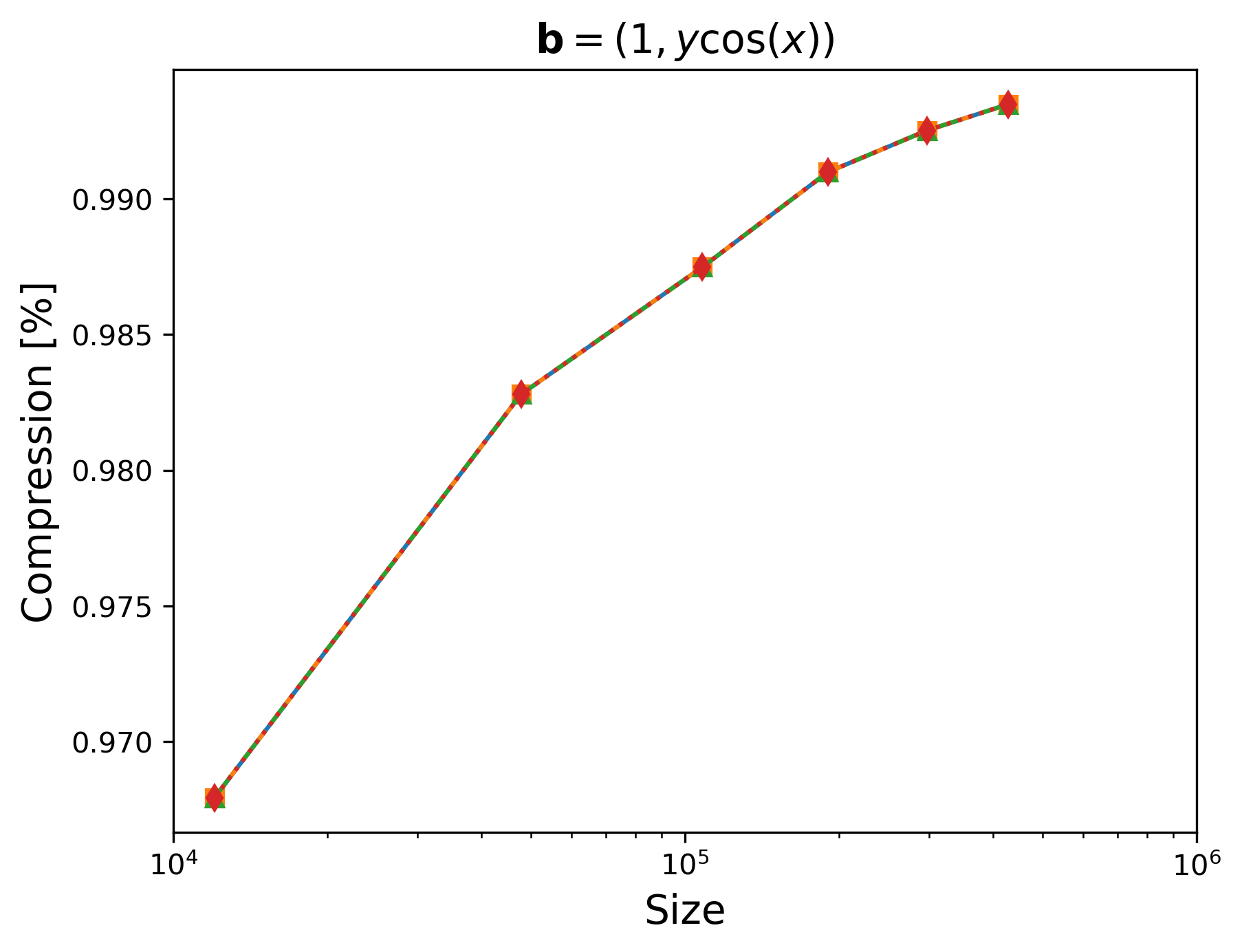}
        \caption{$\bb = (1, y\cos(x))$}
        \label{fig:err(1,ycos(x))}
    \end{subfigure}    
    \begin{subfigure}[b]{0.45\linewidth}
        \centering
        \includegraphics[width=\linewidth,height=0.55\linewidth, trim = 0 0 0 22, clip]{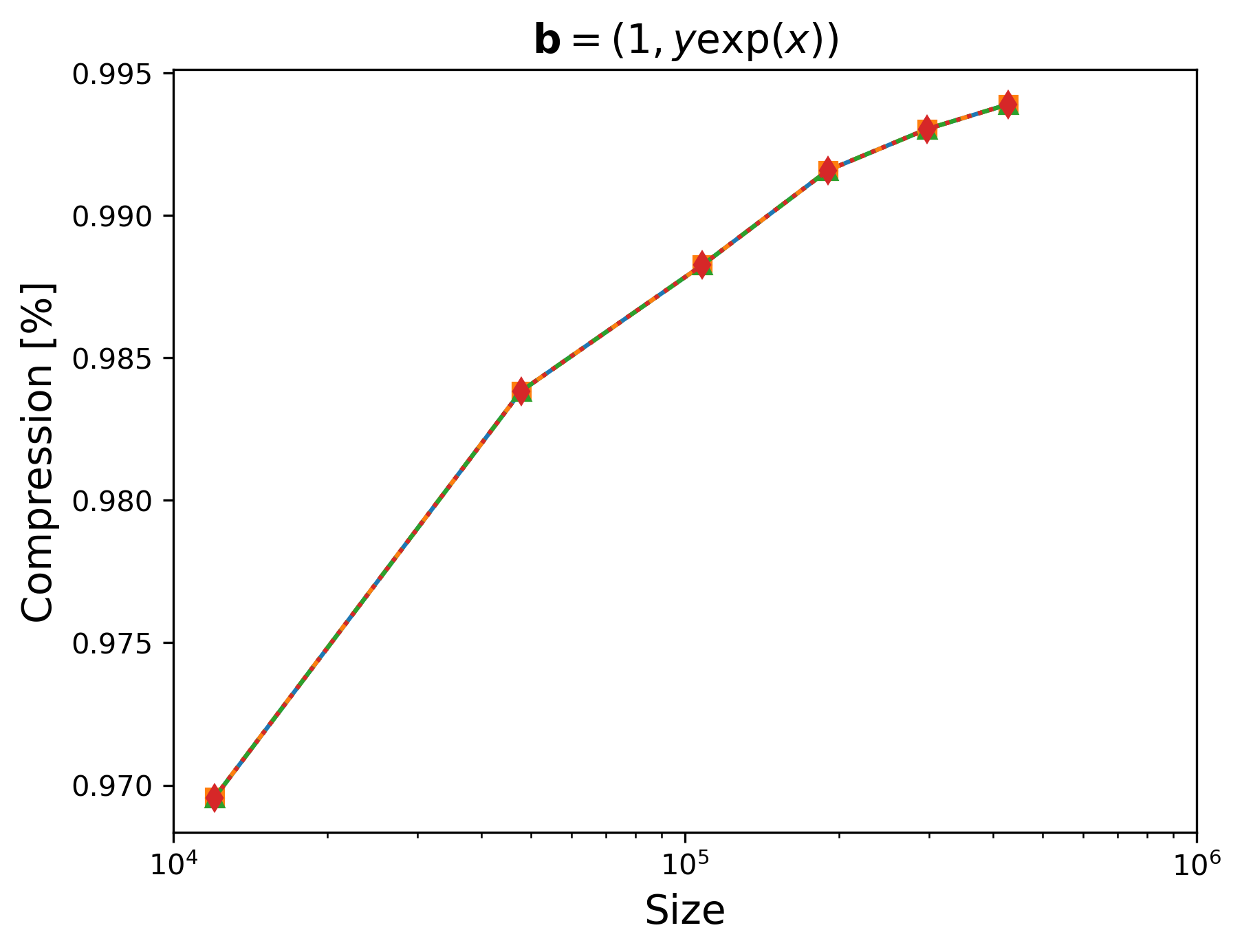}
        \caption{$\bb = (1, y\exp(x))$}
        \label{fig:err(1,yexp(x))}
    \end{subfigure}
\caption{Compression of $L_{\mathcal H} U_{\mathcal H}$ for FreeFem matrices of problem~\eqref{Pb:Neumann} for different convection $\bb$.}
    \label{fig:comprLUneumann}
\end{figure}
\begin{figure}
    \centering
    \begin{subfigure}[b]{0.45\linewidth}
        \centering
        \includegraphics[width=\linewidth,height=0.55\linewidth, trim = 0 0 0 0, clip]{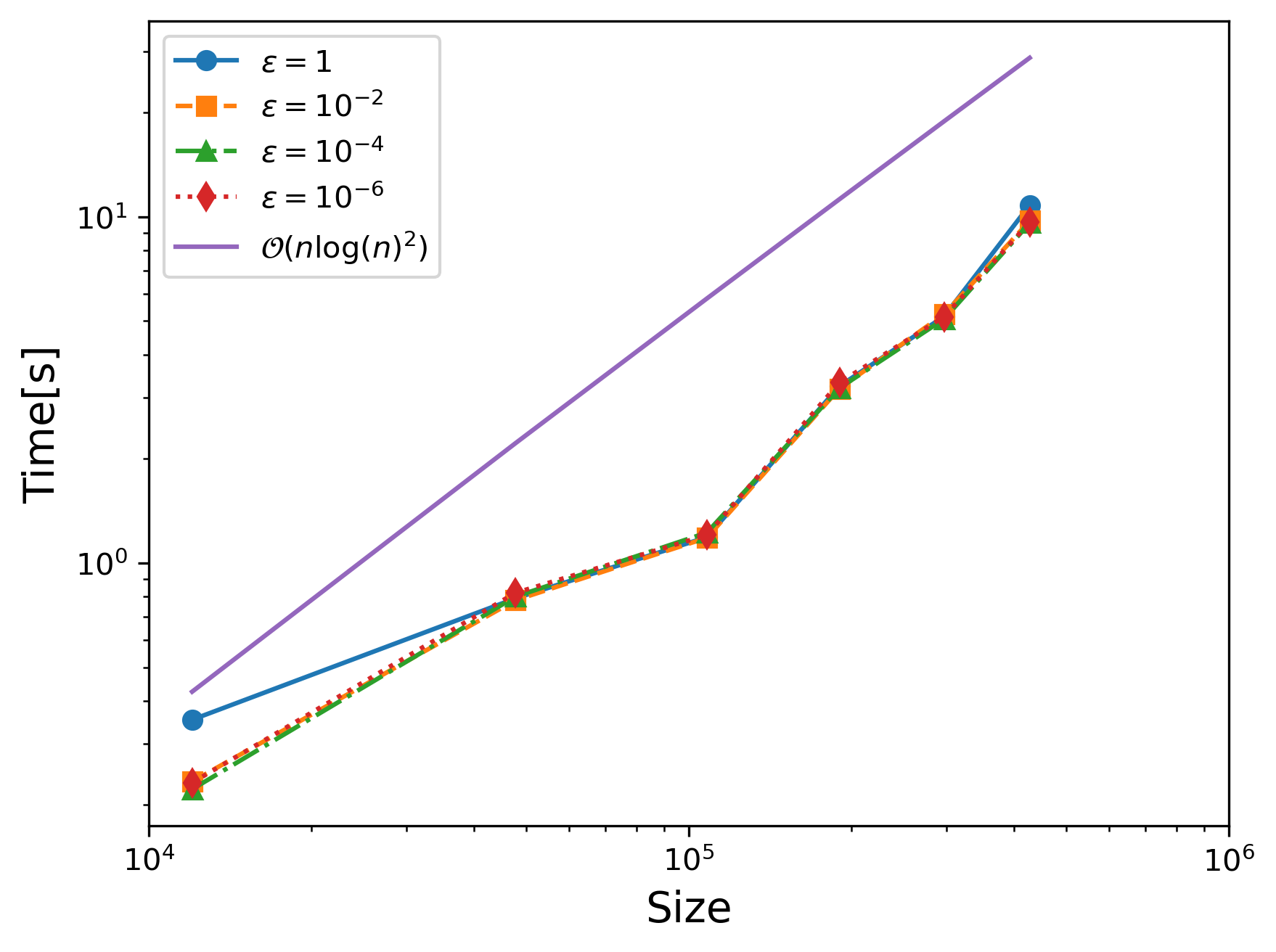}
        \caption{$\bb = (1, 0)$}
        \label{fig:err(1,0)}
    \end{subfigure}
    \begin{subfigure}[b]{0.45\linewidth}
        \centering
        \includegraphics[width=\linewidth,height=0.55\linewidth, trim = 0 0 0 0, clip]{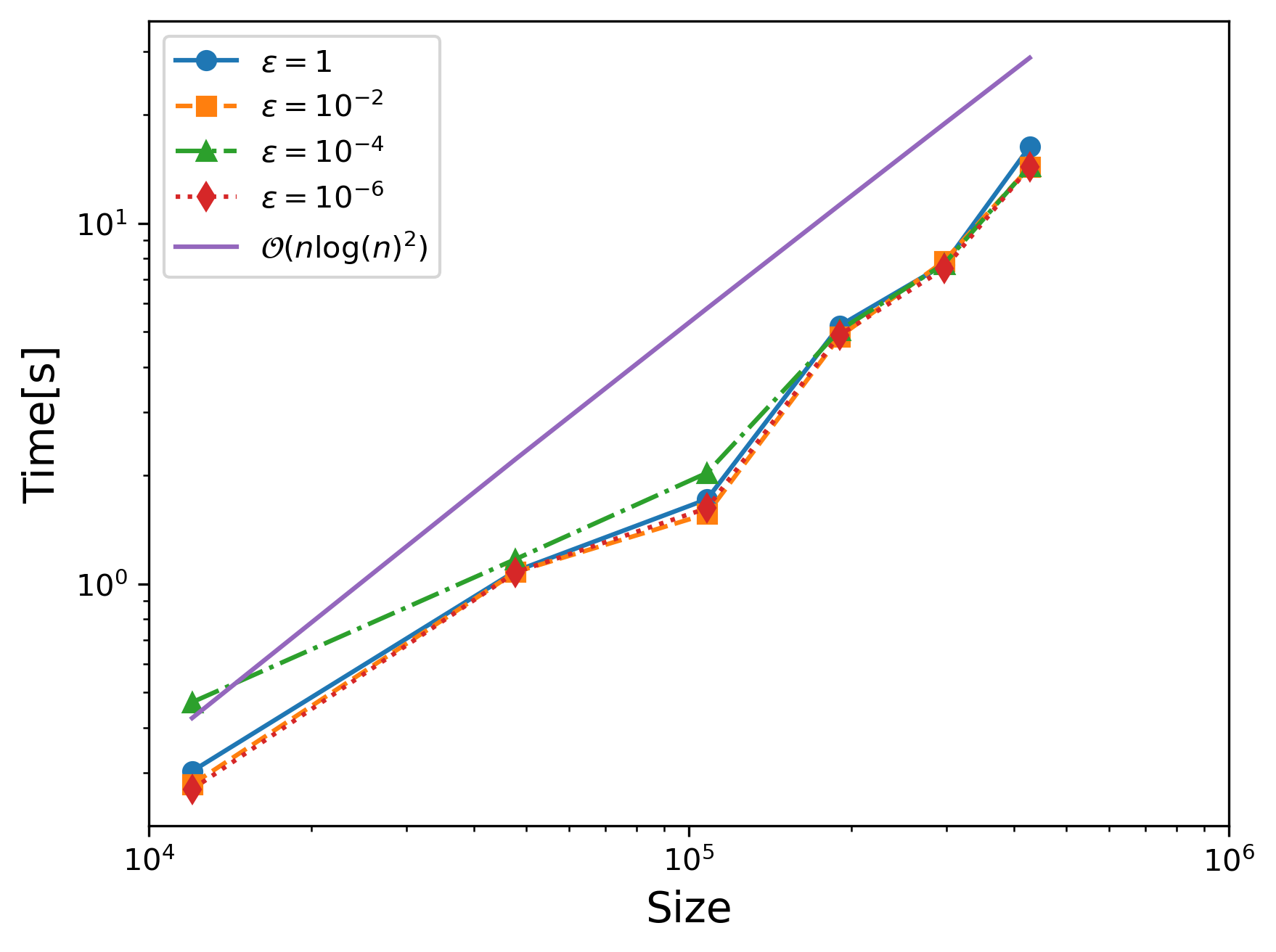}
        \caption{$\bb = (1, y\cos(x))$}
        \label{fig:err(1,ycos(x))}
    \end{subfigure}    
    \begin{subfigure}[b]{0.45\linewidth}
        \centering
        \includegraphics[width=\linewidth,height=0.55\linewidth, trim = 0 0 0 0, clip]{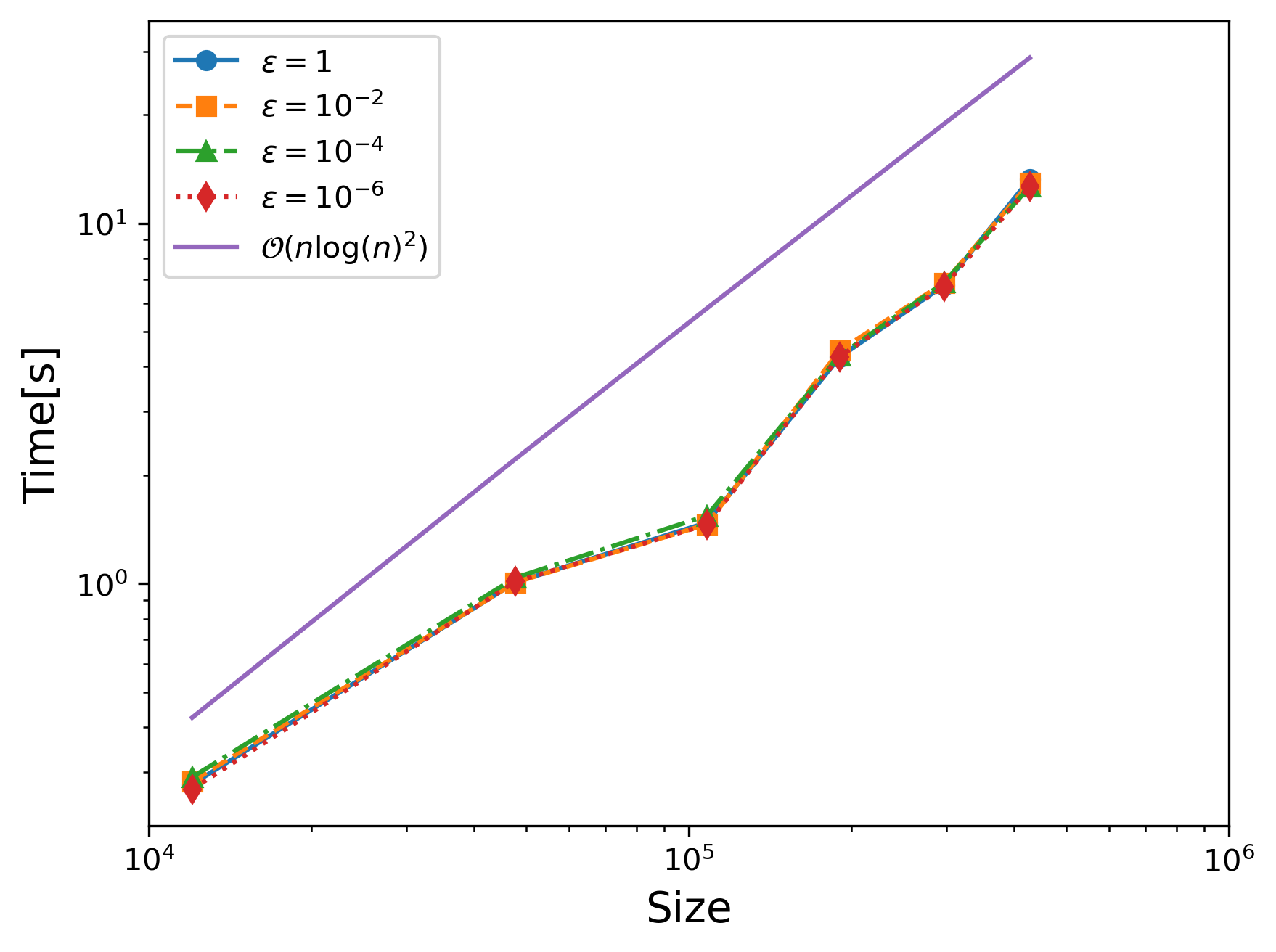}
        \caption{$\bb = (1, y\exp(x))$}
        \label{fig:err(1,yexp(x))}
    \end{subfigure}
    
    \caption{Time of the $\mathcal{H}$-LU factorization for FreeFem matrices of problem \eqref{Pb:Neumann} for different convection~$\bb$.}
    \label{fig:timeLUneumann}
\end{figure}

It appears that with our approach, using tube clusters, we manage to compute a hierarchical $LU$-factorization in a quasi-linear time and that the error induced in the resolution of the system $Ax=y$ is extremely small. More importantly, the diffusion parameter $\varepsilon$ doesn't seem to have a significant impact on the results, as intended for a P\'eclet robust method.

\section{Conclusion}
\label{section5}

In this work, we have investigated the use of hierarchical matrices for advection-diffusion problems in the advection-dominated regime. While the standard hierarchical approaches are well established for elliptic problems, their extension to non-symmetric and convection-dominated settings remains an open challenge. Our contribution lies in identifying a specific class of hierarchical clustering, namely tube clusters, that allow us to extend key theoretical results from the elliptic case to advection-diffusion problems.

Starting with the constant coefficient case, we establish the following key estimate: let $\omega \subsetneq \omega_\delta$ be two nested domains, and let $\eta$ be a cut-off function supported in $\omega_\delta$. Then the solution $u$ to the variational problem \eqref{prob_var} satisfies:
\[ \varepsilon \Vert (\nabla  \eta u )\Vert_{\mL^2(\omega)}^2 \leq   a(u, \eta^2 u ) + \int_{\omega_\delta} \eta u ^2 \bb \cdot \nabla \eta d\bx + \varepsilon \Vert u\nabla \eta \Vert_{\mL^2(\omega_\delta)}^2 .\]

By carefully designing clusters aligned with the advection field, we ensured that the integral term disappears, leading to a Péclet-robust Caccioppoli estimate. This crucial step allowed us to establish a hierarchical approximation theory comparable to that of B\"orm(\cite{MR2606959}) for elliptic problems, but adapted to advection-dominated settings.

We then extended this result to more general advection fields, considering $\bb\in C^1(\mathbb{R}^d)$ with $\bb$ nowhere vanishing. This extension significantly broadens the applicability of our method beyond simple constant-coefficient cases, making it relevant for a wide range of physical and engineering problems. Since the feasibility of our approach depends on the ability to construct tube clusters efficiently, we introduced a deformation technique that transforms the computational domain into a space where the advection streamlines become straight. This reformulation provides a natural way to design hierarchical partitions aligned with the physics of the problem.

To validate our theoretical findings, we conducted numerical experiments on various advection fields, analyzing the impact of the diffusion parameter $\varepsilon$ under mesh refinement, for both Dirichlet and Neumann boundary conditions. The results confirmed the robustness of our approach: the time complexity of the $LU$ factorization and the accuracy of the solution remained independent of $\varepsilon$, demonstrating that our hierarchical method does not suffer from the usual breakdowns associated with high P\'eclet numbers.

Overall, our findings suggest that for advection-dominated problems, it is possible to construct hierarchical approximations of the $LU$ factorization using tube cluster trees, achieving comparable error and computational costs to those obtained in the elliptic setting. By carefully incorporating the underlying physics into the hierarchical framework, we have significantly extended the range of applicability of hierarchical matrices. Further investigations are needed to extend this approach to more complex geometries, such as domains with holes, where the interaction structure may significantly differ. Moreover, in our study, the minimum leaf size of the cluster tree is dictated by the characteristic length of the flow. A natural direction for future work is to explore the implications of allowing deeper refinements in the tree and to assess how this affects the hierarchical representation and compression properties.

\nocite{*}
\bibliography{draft}
\bibliographystyle{elsarticle-num-names}

\end{document}